\setlist[enumerate]{font={\upshape}, label=\arabic*., leftmargin=2.5em}
\setlist[itemize]{leftmargin=2.5em}
\setlist[description]{leftmargin=\parindent, 
	itemsep=3pt
}
\newlist{equivlist}{enumerate}{1}
\setlist[equivlist]{font={\upshape}, label=(\roman*)}
\tikzset{ 
	table/.style={
		matrix of nodes,
		nodes={rectangle,text width=1.75em,align=center},
		text depth=1.25ex,
		text height=2.5ex,
		nodes in empty cells
	}
}
\newtheorem{theorem}{Theorem}[section]
\newtheorem{lemma}[theorem]{Lemma}
\newtheorem{claim}{Claim}[theorem]
\Crefname{claim}{Claim}{Claims}
\newenvironment{clm}[1]
{\claim}
{\endclaim}
\newtheorem{conjecture}[theorem]{Conjecture}
\Crefname{conjecture}{Conjecture}{Conjectures}
\let\expandafter\oldproof\csname\string\proof\endcsname
\let\oldendproof\endproof
\renewenvironment{proof}[1][\proofname]{%
	\oldproof[\normalfont\bfseries #1]%
}{\oldendproof}
\newenvironment{subproof}[1][\normalfont\it Subproof]{%
	\begin{proof}[#1]%
	}{%
	\end{proof}%
}
\newcommand{\dd}{\textquotedblleft}
\newcommand{\ee}{\textquotedblright}
\newcommand{\mac}{\mathcal}
\newcommand{\mab}{\mathbb}
\newcommand{\eps}{\varepsilon}
\newcommand{\nin}{\notin}
\renewcommand{\subset}{\subseteq}
\renewcommand{\supset}{\supseteq}
\newcommand{\erh}{Erd\H{o}s--Hajnal}
\newcommand{\gas}{Gy\'{a}rf\'{a}s--Sumner}
\DeclarePairedDelimiter\abs{\lvert}{\rvert}%
\newcommand{\leqnomode}{\tagsleft@true}
\newcommand{\reqnomode}{\tagsleft@false}
\date{September 17, 2024; revised \today}
\begin{document}
	\title{On polynomially high-chromatic pure pairs}
	\author{Tung H. Nguyen}
	\address{Princeton Univeristy, Princeton, NJ 08544, USA}
	\email{\href{mailto:tunghn@math.princeton.edu}{tunghn@math.princeton.edu}}
	\thanks{Partially supported by AFOSR grant FA9550-22-1-0234, NSF grant DMS-2154169, and
		a Porter Ogden Jacobus Fellowship.}
	\begin{abstract}
		Let $T$ be a forest.
		We study polynomially high-chromatic pure pairs in graphs with no $T$ as an induced subgraph ({\em $T$-free} graphs in other words),
		with applications to the polynomial \gas{} conjecture.
		In addition to reproving several known results in the literature, we deduce: 
		\begin{itemize}
			\item If $T=P_5$ is the five-vertex path, then every $T$-free graph $G$ with clique number $w\ge2$
			contains a complete pair $(A,B)$ of induced subgraphs with $\chi(A)\ge w^{-d}\chi(G)$ and $\chi(B)\ge 2^{-d}\chi(G)$, for some universal $d\ge1$.
			The proof uses the recent \erh{} result for $P_5$-free graphs.
			Via the classical Gy\'arf\'as path argument, such a \dd polynomial versus linear high-$\chi$ complete pairs\ee{} result can be viewed as further supporting evidence for the polynomial \gas{} conjecture for $P_5$.
			In particular, it implies
			\[\chi(G)\le w^{O(\log w/\log\log w)}\]
			which asymptotically improves the bound $\chi(G)\le w^{\log w}$ of Scott, Seymour, and Spirkl.
			
			\item If $T$ and a broom satisfy the polynomial \gas{} conjecture, then so does their disjoint union.
			Unifying earlier results of Chudnovsky, Scott, Seymour, and Spirkl, and of Scott, Seymour, and Spirkl, this gives new instances of $T$ for which the conjecture holds.
			
		\end{itemize} 
	\end{abstract}
	\maketitle
\section{Introduction}
	All graphs in this paper are finite and simple.
	For an integer $k\ge2$, let $P_k$ denote the $k$-vertex path.
	For a graph $G$, let $\abs G$ denote the number of vertices of $G$.
	For every $v\in V(G)$, let $N_G(v)$ be the set of neighbours of $v$ in $G$, and let $N_G[v]:=N_G(v)\cup\{v\}$.
	The {\em chromatic number} of $G$, denoted by $\chi(G)$, is the least $\ell\ge0$ such that the vertex set $V(G)$ of $G$ can be partitioned into $\ell$ stable sets in $G$;
	the {\em clique number} of $G$, denoted by $\omega(G)$, is the size of a largest clique in $G$;
	and $\alpha(G)$ is the size of a largest stable set in $G$.
	For a graph $G$ with $S\subset V(G)$,
	let $G[S]$ be the subgraph of $G$ induced on $S$, and write $\chi(S)$ for $\chi(G[S])$ when there is no danger of ambiguity.
	For disjoint $A,B\subset V(G)$,
	the pair $(A,B)$ is {\em complete} if $G$ contains all possible edges between $A$ and $B$,
	is {\em anticomplete} if $G$ has no edge between $A$ and $B$,
	and {\em pure} if $(A,B)$ is either complete or anticomplete in $G$.
	An {\em induced subgraph} of $G$ is a graph obtained from $G$ by removing vertices;
	and say that $G$ is {\em $H$-free} for some graph $H$ if $G$ has no induced subgraph isomorphic to $H$.
	A class $\mac G$ of graphs is {\em hereditary} if it is closed under isomorphism and taking induced subgraphs.
	We say that $\mac G$ is {\em $\chi$-bounded} if there exists $f\colon\mab N\to\mab N$ depending on $\mac G$ only such that
	$\chi(G)\le f(\omega(G))$ for all $G\in\mac G$;
	and such a function $f$ is {\em $\chi$-binding} for $\mac G$
	(see~\cite{MR4174126,scott2022} for surveys on $\chi$-boundedness).
	The Gy\'arf\'as--Sumner conjecture~\cite{MR382051,MR634555} asserts that:
\begin{conjecture}
	[\gas{}]
	\label{conj:gs}
	For every forest $T$, the class of $T$-free graphs is $\chi$-bounded.
\end{conjecture}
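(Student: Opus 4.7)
The plan is to induct on $\abs{V(T)}$. The base cases $\abs{V(T)} \le 2$ are trivial: if $T$ is a single vertex or edge, then $T$-free graphs are empty or edgeless, so $\chi \le 1$. For disconnected $T = T_1 \sqcup T_2$, I would try to derive $\chi$-boundedness of $T$-free graphs from $\chi$-boundedness of $T_i$-free graphs ($i=1,2$); even this reduction is not known in full generality, but partial progress (including the forest-plus-broom disjoint union result advertised in the abstract, and classical results combining paths with stars) makes it reasonable to first focus on the connected case, where $T$ is a tree.

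Assume then that $T$ is a tree, rooted at some $r \in V(T)$ with child subtrees $T_1, \ldots, T_d$, each a tree on fewer vertices. Given a $T$-free graph $G$ with $\omega(G) = w$, the strategy is a Gy\'arf\'as-style branching argument. First, by averaging inside an optimal colouring, select $v \in V(G)$ whose open neighbourhood $N_G(v)$ has chromatic number a nontrivial fraction of $\chi(G)/w$. Any induced copy of $T_i$ inside $N_G(v)$ extends, via $v$, to an induced copy of the subtree of $T$ obtained by attaching $T_i$ to $r$; so avoiding $T$ in $G$ forces $N_G(v)$ to avoid a certain induced substructure associated with each $T_i$. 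Iterating this for a BFS-style partition of $N_G(v)$ into levels relative to a further high-chromatic subgraph, and applying the inductive hypothesis to control each level, should in principle yield a recursive $\chi$-binding function $f_T(w)$ depending on the depth and branching of $T$.

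The main obstacle, and the reason \Cref{conj:gs} has remained open for five decades despite resolution of many special cases (paths via the Gy\'arf\'as path argument~\cite{MR382051}, subdivided stars, brooms, radius-two trees, and various isolated families), is the \emph{coordinated embedding problem}: controlling induced occurrences of each $T_i$ individually near $v$ does not suffice, because a $T$-free graph must additionally forbid an induced copy of $T$ with all $d$ child subtrees attached simultaneously to a common neighbour and with no stray edges among them. Existing techniques handle this coordination only when the child subtrees have very restricted shape (all paths, or a single nontrivial subtree together with isolated leaves). A resolution in full generality appears to require either a new structural decomposition of $T$-free graphs that isolates coordination from embedding, or a fundamentally new amplification principle going beyond anything in the current literature, and I would concentrate technical effort there, using the complete/anticomplete pure-pair dichotomy developed in this paper as a guiding tool.
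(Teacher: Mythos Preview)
The statement you are addressing is \emph{Conjecture}~\ref{conj:gs}, the Gy\'arf\'as--Sumner conjecture, which the paper presents as an open problem and does not prove. The paper says explicitly that ``this conjecture remains largely open and is known to hold for a few restricted families of forests,'' and its own contributions concern special cases of the stronger polynomial version (\cref{conj:pgs}), not a resolution of \cref{conj:gs} itself. There is therefore no ``paper's own proof'' to compare against.

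Your proposal is, correspondingly, not a proof but an honest discussion of a natural inductive strategy together with the well-known obstacles to carrying it out. You correctly flag two genuine gaps: the disjoint-union reduction (deriving $\chi$-boundedness for $T_1\sqcup T_2$-free graphs from that of $T_i$-free graphs) is not known in general, and for connected $T$ the ``coordinated embedding problem'' you describe is exactly why the conjecture remains open after five decades. These are real obstructions, not technicalities, and you do not claim to overcome them. Since neither you nor the paper supplies a proof, the only accurate assessment is that the statement is a conjecture and no proof should be expected here.
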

	This conjecture remains largely open and is known to hold for a few restricted families of forests; see~\cite[Section 3]{MR4174126} and the references therein, and~\cite{stable1} for a recently obtained approximation.
	
	We say that a hereditary class $\mac G$ is {\em polynomially $\chi$-bounded} if  $\mac G$ is {$\chi$-bounded} with a polynomial $\chi$-binding function.
	While most known partial results towards \cref{conj:gs} yield super-exponential $\chi$-binding functions and it is known~\cite{MR4707561} that there are $\chi$-bounded classes of graphs that are not polynomially $\chi$-bounded,
	the following substantial strengthening of \cref{conj:gs} could be true:
\begin{conjecture}
	[Polynomial \gas]
	\label{conj:pgs}
	For every forest $T$, the class of $T$-free graphs is polynomially $\chi$-bounded.
\end{conjecture}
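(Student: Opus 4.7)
The statement in question is \cref{conj:pgs}, which is a major open conjecture rather than a theorem with a known proof. So the honest plan is to describe the general attack that the present paper (and its predecessors in the Scott--Seymour--Spirkl--Chudnovsky line) appear to be pursuing, and to flag where it breaks.

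My overall strategy would be an induction on the structure of the forest $T$. The idea is to reduce \cref{conj:pgs} to its connected case (trees), and then build trees from smaller forests by a short list of operations: disjoint union, adding a pendant edge, adding a pendant path (subdivision at a leaf), and gluing along a root. For each such operation one would prove a lemma of the form \emph{if the conjecture holds for the building blocks, then it holds for the combined forest}. The bullet point in the abstract about brooms already gives one instance of this for the disjoint-union operation. So, pragmatically, the plan is: fix the broken primitive cases (trees of small radius, starting with paths $P_k$), and for the rest invoke combination lemmas.

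The backbone of each combination lemma would be a \emph{pure-pair dichotomy} in the spirit of the $P_5$ result highlighted in the abstract. Concretely, for a target forest $T$ one aims to prove: in any $T$-free graph $G$ with $\omega(G)=\omega$ and $\chi(G)$ large, there exist disjoint $A,B\subset V(G)$ such that $(A,B)$ is pure (complete or anticomplete) and
\[
\chi(G[A])\ge \omega^{-c}\chi(G),\qquad \chi(G[B])\ge \omega^{-c}\chi(G),
\]
for a constant $c=c(T)$. Iterating this via the classical Gy\'arf\'as \emph{path argument} (layer $G$ by BFS distance from a root, apply the pure-pair lemma inside a high-$\chi$ level) then either embeds the next building block of $T$ or recurses, producing a polynomial $\chi$-binding function.

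The main obstacle, which is exactly where all existing attacks on \cref{conj:pgs} stall, is proving a \emph{balanced polynomial} pure-pair lemma. Existing tools — R\"odl-type regularization, iterated neighborhood sparsification, and the Scott--Seymour--Spirkl machinery — typically yield pure pairs in which one side has polynomial $\chi$ but the other only has logarithmic $\chi$, which is what produces quasi-polynomial bounds such as the $w^{\log w}$ record for $P_5$ superseded here. Upgrading both sides to polynomial size requires genuinely new structural input, and at present this is only known for very restricted $T$ (such as $P_5$, via the arguments of this paper). The induction scheme I sketched therefore really rests on finding balanced polynomial pure pairs for at least the primitive case of paths $P_k$ with $k\ge 6$; I would expect that to be the first and hardest step, with the combinatorial combination lemmas (disjoint union with a broom, and hopefully with more general trees) being the easier and more modular part of the program.
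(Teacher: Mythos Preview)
You are right that \cref{conj:pgs} is an open conjecture and that the paper does not prove it; it only supplies further partial evidence. Your high-level programme of combination lemmas plus primitive cases is indeed the shape of the current line of attack, and the paper's addibility result for brooms (\cref{thm:broom}) is one such combination lemma.

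However, your central quantitative claim is wrong, and this is not a minor slip: a pure pair $(A,B)$ with $\chi(A),\chi(B)\ge \omega^{-c}\chi(G)$ does \emph{not} yield a polynomial $\chi$-binding function. The paper says so explicitly (see the remark following \cref{lem:term}): for $P_5$ one can already obtain a complete pair with both sides at least $\omega^{-d}\chi(G)$, and this gives only $\chi(G)\le \omega^{O(\log \omega)}$. The reason is that iterating such a pair reduces the clique number by at most a constant factor per step, so one needs $\Theta(\log \omega)$ rounds, each costing a polynomial factor. Your sentence ``Iterating this \ldots\ producing a polynomial $\chi$-binding function'' is therefore the genuine gap in the plan.

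You also have the hierarchy of pure-pair strengths inverted. In the paper's terminology, the ``polynomial versus linear'' pair of \cref{thm:mainll} has one side $\ge \omega^{-b}\chi(G)$ and the other side $\ge 2^{-b}\chi(G)$: the ``linear'' side is a \emph{constant fraction} of $\chi(G)$, independent of $\omega$, which is strictly stronger than a polynomial-in-$\omega^{-1}$ fraction, not weaker. Even this stronger pair only yields $\chi(G)\le \omega^{O(\log \omega/\log\log \omega)}$ (see the proof of \cref{thm:llp5}), still quasi-polynomial. So neither ``both sides polynomial'' nor ``one side polynomial, one side linear'' is enough for \cref{conj:pgs}; something qualitatively different (for instance a complete blockade of length $\omega^{\varepsilon}$ with polynomial blocks, or the colourful-subgraph \cref{conj:clful}/\cref{conj:modp5}) would be needed, and that is where the problem is actually stuck.
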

(The case when $T$ is a path was independently asked by Esperet~\cite{esperet} and by Trotignon and Pham~\cite{MR3789676}.)
This conjecture is of particular interest because of a conjecture of Erd\H os and Hajnal~\cite{MR1031262,MR599767} that for every (not necessarily forest) graph $H$, every $H$-free graph $G$ satisfies $\max(\alpha(G),\omega(G))\ge \abs {G}^c$ for some $c>0$ depending on $H$ only.
Since $\chi(G)\ge\abs{G}/\alpha(G)$ by definition,
if \cref{conj:pgs} holds for $H$ then so does the \erh{} conjecture.
Currently, the five-vertex path $P_5$ is the smallest open case of \cref{conj:pgs};
and recently, it has been proved that $P_5$ does satisfy the \erh{} conjecture~\cite{density7}:
\begin{theorem}
	[Nguyen--Scott--Seymour]
	\label{thm:ehp5}
	There exists $a\ge4$ such that for every $k\ge1$, every $P_5$-free graph with more than $k^a$ vertices has a clique or stable set with more than $k$ vertices.
\end{theorem}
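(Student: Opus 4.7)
The plan is to prove \cref{thm:ehp5} via a \emph{linear-size pure pair lemma} for $P_5$-free graphs: there exists an absolute $\eta>0$ such that every $P_5$-free graph $G$ on $n\ge 2$ vertices contains disjoint $A,B\subseteq V(G)$ with $\abs{A},\abs{B}\ge \eta n$ and $(A,B)$ pure. Given such a lemma, the theorem follows by a Ramsey-style induction: writing $R(s,t)$ for the maximum number of vertices in a $P_5$-free graph with $\omega\le s$ and $\alpha\le t$, one proves $R(s,t)\le (st)^{C}$ for $C=\log_2(1/\eta)$ by induction on $s+t$. A complete pure pair $(A,B)$ forces $\omega(A)+\omega(B)\le s$, so $\min(\omega(A),\omega(B))\le s/2$; the inductive bound gives $\eta n\le R(s/2,t)\le (st/2)^{C}$, and the choice of $C$ yields $n\le (st)^{C}$. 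The anticomplete case is symmetric in $\alpha$. Setting $s=t=k$ gives \cref{thm:ehp5} with $a=2C$.

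For the pure pair lemma itself, I would pursue a Gy\'arf\'as-style vertex-pivot argument. Pick a vertex $v\in V(G)$; let $N=N_G(v)$ and $M=V(G)\setminus N_G[v]$. If $\abs{M}\le\eta n$ then $\abs{N}\ge(1-\eta)n-1$, and applying the inductive form of the lemma to the $P_5$-free graph $G[N]$ yields a pure pair of the required size; the case of small $\abs{N}$ is analogous. In the balanced regime $\abs{N},\abs{M}\ge \eta n$, the $P_5$-free hypothesis imposes strong restrictions on the bipartite adjacency between $N$ and $M$: any induced $P_4$ with vertex ordering $a,b,c,d$ satisfying $a\in N$ and $b,c,d\in M$ would combine with $v$ into an induced $P_5$ on $v,a,b,c,d$. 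Ramsey-type or density arguments exploiting this restriction should produce the desired linear-size pure pair between subsets of $N$ and $M$.

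The principal obstacle is achieving the \emph{linear-size} version of the pure pair lemma rather than just a polynomial one. If one had only pure pairs of size $n^{\varepsilon}$, the Ramsey recursion above degrades into a quasi-polynomial bound of the form $\max(\omega,\alpha)\ge 2^{(\log n)^{1-\varepsilon'}}$, insufficient for \cref{thm:ehp5}. Bridging this gap likely requires a strengthened structural hypothesis --- for example, a decomposition theorem for $P_5$-free graphs into simpler pieces that is closed under the relevant operations, or density/VC-dimension techniques refining the bipartite analysis above --- and this constitutes the main technical content of the proof in~\cite{density7}.
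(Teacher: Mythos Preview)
The paper does not prove \cref{thm:ehp5}; it is quoted from~\cite{density7} and invoked as a black box in the proof of \cref{lem:mid}. There is therefore no proof in this paper to compare your proposal against.

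Regarding the proposal itself: the reduction from a linear-size pure pair lemma to the polynomial Erd\H{o}s--Hajnal bound is correct and standard. But as you yourself acknowledge in your final paragraph, the whole content of the theorem lies in that pure pair lemma, and your sketch does not supply it. The Gy\'arf\'as pivot you outline does not give linear pure pairs: in the balanced case you only note that certain induced $P_4$'s straddling $N$ and $M$ are forbidden, and ``Ramsey-type or density arguments exploiting this restriction'' is a placeholder rather than an argument. (Your unbalanced case is also not quite right as written: recursing into $G[N]$ produces a pure pair of size roughly $\eta\abs{N}\approx\eta(1-\eta)n$, not $\eta n$, so the constant degrades at each step and the induction does not close.) In short, you have correctly located where the difficulty lies and then deferred it to~\cite{density7}; this is an outline of a possible approach, not a proof.
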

	The first goal of this paper is to provide an improved bound on the $\chi$-binding functions of $P_5$-free graphs.
	As is well-known, the Gy\'arf\'as path argument~\cite{MR382051} (see \cref{thm:gs}) implies that every $P_5$-free graph with clique number at most $w\ge2$ has chromatic number at most $3^w$;
	and Esperet, Lemoine, Maffray, and Morel~\cite{MR3010736} pushed this down slightly to $5\cdot 3^{w-3}$.
	Scott, Seymour, and Spirkl~\cite{MR4648583} recently improved this exponential bound to $w^{\log w}$.
	(In this paper $\log$ denotes the binary logarithm.)
	We show a $\log\log$ improvement over this bound, as follows.
\begin{theorem}
	\label{thm:llp5}
	There exists $d\ge1$ such that every $P_5$-free graph $G$ with clique number at most $w\ge3$ has chromatic number at most $w^{d\log w/\log\log w}$.
\end{theorem}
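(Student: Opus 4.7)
My plan is to derive \cref{thm:llp5} from the ``polynomial versus linear complete pairs'' result advertised in the abstract, which for convenience I denote Lemma $\star$: there exists $d_0\ge1$ such that every $P_5$-free graph $G$ with $\omega(G)=w\ge2$ admits a complete pair $(A,B)$ of induced subgraphs with $\chi(A)\ge w^{-d_0}\chi(G)$ and $\chi(B)\ge 2^{-d_0}\chi(G)$. Granting Lemma $\star$, I will prove by induction on $w$ that every $P_5$-free $G$ with $\omega(G)\le w$ satisfies $\chi(G)\le f(w):=w^{c\log w/\log\log w}$ for $c:=2d_0$, absorbing bounded $w$ into the universal constant.

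In the inductive step, apply Lemma $\star$ to obtain a complete pair $(A,B)$ in $G$ and set $a:=\omega(A)$ and $b:=\omega(B)$. Both $A$ and $B$ are $P_5$-free by heredity, and since $(A,B)$ is complete in $G$, the union of maximum cliques in $A$ and in $B$ is a clique in $G$, so $a+b\le w$ (with $a,b\ge1$ whenever $A,B$ are nonempty, which we may assume). The inductive hypothesis then gives $\chi(A)\le f(a)$ and $\chi(B)\le f(b)$, so $\chi(G)\le w^{d_0}f(a)$ and $\chi(G)\le 2^{d_0}f(b)$ hold simultaneously. It therefore suffices to show that for every pair $(a,b)$ with $a,b\ge1$ and $a+b\le w$, at least one of the two bounds $w^{d_0}f(a)\le f(w)$ or $2^{d_0}f(b)\le f(w)$ is valid.

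Writing $h(w):=\log_2 f(w)=c\log^2 w/\log\log w$ and splitting on the threshold $a^\star:=w/\sqrt{\log w}$, the cases are as follows. When $a\le a^\star$, using $\log a^\star=\log w-\tfrac12\log\log w$ and $\log\log a^\star\approx\log\log w$, a direct expansion gives $h(a)\le h(a^\star)\le h(w)-c\log w=h(w)-2d_0\log w$, and hence $w^{d_0}f(a)\le f(w)$ with a margin of $d_0\log w$ to spare. When $a>a^\star$, necessarily $b\le w-a^\star=w(1-1/\sqrt{\log w})$; the first-order estimate $h'(w)\asymp(\log w)/(w\log\log w)$ then yields $h(w)-h(b)\gtrsim\sqrt{\log w}/\log\log w$, which exceeds $d_0$ once $w$ passes some absolute threshold, and hence $2^{d_0}f(b)\le f(w)$. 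Either way the induction closes, completing the reduction.

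The principal obstacle is of course Lemma $\star$ itself, which is the paper's main new structural contribution and where the substance of the argument lies; the passage from Lemma $\star$ to \cref{thm:llp5} is the essentially self-contained balanced recursion above, tighter but morally identical to the Scott--Seymour--Spirkl induction that produced the weaker $w^{\log w}$ bound. What makes the $\log\log w$ improvement possible is precisely the asymmetry $(w^{-d_0},2^{-d_0})$ rather than $(w^{-d_0},w^{-d_0})$ in Lemma $\star$, which allows the recursion to be balanced at the slightly sublinear scale $a\asymp w/\sqrt{\log w}$ instead of at $a\asymp w/2$.
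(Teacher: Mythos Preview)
Your reduction is correct, but the paper organises the recursion differently. Instead of applying Theorem~\ref{thm:mainll} once and splitting on whether $\omega(A)\le w/\sqrt{\log w}$, the paper iterates Theorem~\ref{thm:mainll} inside the $B$-side to build a complete blockade $(B_0,\ldots,B_k)$ with $\chi(B_i)\ge w^{-2b}\chi(G)$ for each $i<k$ and $\chi(B_k)\ge 2^{-bk}\chi(G)$; this forces $k\ge\log w$, so by pigeonhole some block has clique number at most $w/\log w$, and the induction step becomes the single clean inequality $f(w)\ge w^{2b}f(w/\log w)$, verified via the monotonicity of $x\mapsto\log x/\log\log x$. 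Your route trades that iteration for a two-case balance at the threshold $w/\sqrt{\log w}$: it is more direct (one application of the lemma) but the calculus is slightly more delicate. One small caution: your ``direct expansion'' giving $h(a^\star)\le h(w)-2d_0\log w$ is only an asymptotic statement, since replacing $\log\log a^\star$ by $\log\log w$ goes the wrong way for an upper bound; the paper's monotonicity trick gives the rigorous $h(w)-h(a^\star)\ge c\,\frac{\log w\cdot\log(w/a^\star)}{\log\log w}=d_0\log w$, which is exactly what you need (no margin, but enough), and the same device handles your Case~2 cleanly as well.
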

	The proof method is via the following \dd polynomial versus linear complete pairs\ee{} fact:
\begin{theorem}
	\label{thm:mainll}
	There exists $b\ge3$ such that for every $P_5$-free graph $G$ with clique number $w\ge2$, there is a complete pair $(A,B)$ in $G$ with $\chi(A)\ge w^{-b}\chi(G)$ and $\chi(B)\ge 2^{-b}\chi(G)$.
\end{theorem}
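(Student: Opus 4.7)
The plan is to proceed by induction on the clique number $w$, using the Gy\'arf\'as path argument as the structural backbone for $P_5$-free graphs. We may assume $G$ is connected by restricting to a component achieving $\chi(G)$. Fix a vertex $v_0\in V(G)$ and, for $i\ge1$, let $N_i=\{u\in V(G):d_G(v_0,u)=i\}$ be the Gy\'arf\'as layers from $v_0$. Because $G$ is connected and $P_5$-free, $V(G)=\{v_0\}\cup N_1\cup N_2\cup N_3$, since otherwise a shortest $v_0$--$u$ path of length four would induce a $P_5$. By subadditivity of chromatic number, at least one of $N_1,N_2,N_3$ has chromatic number at least $(\chi(G)-1)/3$, and the argument splits into three cases accordingly.

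In the case $\chi(N_1)\ge(\chi(G)-1)/3$, I apply the inductive hypothesis to $G[N_1]$, which is $P_5$-free with $\omega(G[N_1])\le w-1$ since $v_0$ is complete to $N_1$. This yields a complete pair $(A',B')$ inside $N_1$ with $\chi(A')\ge (w-1)^{-b}\chi(N_1)$ and $\chi(B')\ge 2^{-b}\chi(N_1)$. Because $v_0$ is adjacent to every vertex of $A'\cup B'$, the pair $(A',B'\cup\{v_0\})$ is complete in $G$; the required bounds on $\chi(A')$ and $\chi(B'\cup\{v_0\})$ follow after absorbing a constant factor into the choice of $b$.

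In the remaining cases where $\chi(N_2)$ or $\chi(N_3)$ is the heavy layer, I exploit the $P_5$-free fact that each vertex of a later layer has a neighbor in the preceding one. The plan is to locate a single vertex $v$ in the preceding layer whose neighborhood in the heavy layer has chromatic number at least a $w^{-O(1)}$ fraction of that layer's chromatic number, and then play $v$ in the role of $v_0$ and its heavy-layer neighborhood in the role of $N_1$, reducing to the previous case. In the case where $\chi(N_3)$ is heavy, the preceding layer $N_2$ has clique number only bounded by $w$, and a preliminary Gy\'arf\'as argument inside $G[N_2]$ may be needed to first locate a controllable sub-class before repeating the hub-finding step.

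The hub-finding step in these later cases is where I expect the main difficulty. A direct pigeonhole across the vertices of the preceding layer incurs a loss proportional to its size, which is unbounded. To replace this by a factor of $w^{O(1)}$, I would combine induction on the preceding layer---whose clique number is at most $w-1$ in Case~2 and at most $w$ in Case~3---with an averaging against a coloring of the heavy layer, localised to a sub-class whose chromatic number has already been controlled inductively. Organising the chromatic and clique bookkeeping across these nested inductions, so that the $A$-side loses only a polynomial factor in $w$ while the $B$-side loses only an absolute constant, is the technical crux of the argument.
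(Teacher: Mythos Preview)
Your induction does not close, and the gap is already visible in Case~1. With $b$ fixed, the inductive hypothesis applied to $G[N_1]$ (clique number $\le w-1$) gives a complete pair $(A',B')$ with $\chi(B')\ge 2^{-b}\chi(N_1)\ge 2^{-b}\cdot\tfrac{1}{3}(\chi(G)-1)$, and you need $\chi(B')\ge 2^{-b}\chi(G)$ for the \emph{same} $b$. You are short by a factor of $3$, and you cannot ``absorb a constant factor into the choice of $b$'': the constant $b$ is fixed once for the entire induction, so if each step from $w-1$ to $w$ costs a factor of $3$ on the $B$-side, after $w-2$ steps you have only $\chi(B)\ge 2^{-b}3^{-(w-2)}\chi(G)$, which is exactly the exponential Gy\'arf\'as bound. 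The $A$-side has the same defect: $(w-1)^{-b}/3\ge w^{-b}$ would require $(w/(w-1))^b\ge 3$, which fails for large $w$. Your Cases~2 and~3 reduce to the same situation (find a hub $v$, pass to $N(v)$ with clique $\le w-1$, recurse), so even a perfect hub-finding step would not rescue the argument. The difficulty you flag as ``the technical crux'' is not a bookkeeping detail; it is the whole problem.

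The paper's proof is entirely different and does \emph{not} induct on $w$. It first produces a polynomially high-chromatic \emph{anticomplete} pair via covering blockades (\cref{lem:covering}), then studies the minimal cutset $S$ separating such a pair inside a controlled induced subgraph. The $P_5$-free hypothesis (\cref{lem:mixed}) forces every vertex of $S$ to be complete to one side of the pair, and the point is to arrange that $\chi(S)$ is a \emph{constant} fraction of $\chi(G)$; this is achieved by first passing to an $\varepsilon$-colourful induced subgraph (\cref{lem:locdense,lem:densepair}). Establishing that colourful reduction in turn requires the Erd\H os--Hajnal property of $P_5$ (\cref{thm:ehp5}) via \cref{lem:mid}, together with an elaborate ``terminal partition'' analysis (\cref{lem:term,lem:linanti}). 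It is this machinery, not a vertex-neighbourhood induction, that prevents any per-step constant loss on the linear side.
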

	Since \cref{thm:gs} implies that every $P_5$-free graph contains a vertex whose neighbourhood has linear chromatic number, this result can be viewed as a corollary of \cref{conj:pgs} for $P_5$.
	Such a \dd polynomial versus linear\ee{} form is inspired by a conjecture of Conlon, Fox, and Sudakov~\cite{MR3497267} that for every graph $H$, there exists $d\ge1$ such that for every $\eps>0$ and every $H$-free graph $G$,
	there are disjoint $A,B\subset V(G)$ for which $\abs A\ge \eps^d\abs G$, $\abs B\ge 2^{-d}\abs G$, and either every vertex in $B$ has fewer than $\eps\abs A$ neighbours in $A$ or every vertex in $B$ has fewer than $\eps\abs A$ nonneighbours in $A$.
	Such a predicted configuration was an important step in the proof of the currently best known bound towards the \erh{} conjecture~\cite{density1}.
	Also, we would like to remark that the \gas{} conjecture \ref{conj:gs} is equivalent to the following \dd complete pairs\ee{} statement, which might possibly be useful in the study of the conjecture.
\begin{conjecture}
	[\gas{}]
	For every $\ell,w\ge1$ and every forest $T$, there exists $k\ge2$ such that every $T$-free graph $G$ with $\chi(G)\ge k$ and $\omega(G)\le w$ contains a complete pair $(A,B)$ with $\chi(A),\chi(B)\ge \ell$.
\end{conjecture}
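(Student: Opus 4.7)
The plan is to prove the conjecture by reducing it to the \gas{} conjecture (\cref{conj:gs}) itself, to which it is equivalent. I would first assume \cref{conj:gs} holds for $T$ with some $\chi$-binding function $f_T\colon\mab N\to\mab N$, and for given $\ell,w\ge1$ set $k:=\max(2,f_T(w)+1)$. Every $T$-free graph $G$ with $\omega(G)\le w$ then satisfies $\chi(G)\le f_T(w)<k$ by \cref{conj:gs}, so no such $G$ meets the hypothesis ``$\chi(G)\ge k$''; the existence of a complete pair with $\chi(A),\chi(B)\ge\ell$ is therefore vacuously true, and $k$ as defined witnesses the conjecture for this choice of $\ell,w$.

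In this way the complete-pairs conjecture follows immediately for every forest $T$ for which \cref{conj:gs} is known---for instance, for the brooms and their disjoint unions discussed in the paper, and, in quasi-polynomial form, for $P_5$ via \cref{thm:llp5}. The actual labour is thus absorbed into constructing a $\chi$-binding function: once one has such an $f_T$, the formula $k=f_T(w)+1$ supplies the explicit constant promised by the conjecture, uniformly in $\ell$.

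The main obstacle is that no strictly stronger proof is possible, since the converse implication recovers \cref{conj:gs} from the complete-pairs statement. Indeed, one would prove \cref{conj:gs} for $T$ by induction on $w$: applying the complete-pairs statement with $\ell:=f_T(w-1)+1$, any putative $T$-free $G$ with $\omega(G)\le w$ and $\chi(G)\ge k$ would contain a complete pair $(A,B)$ with both parts nonempty; picking $b\in B$ together with a maximum clique of $G[A]$ yields a clique in $G$ of size $\omega(G[A])+1\le\omega(G)\le w$, so $\omega(G[A])\le w-1$ and hence $\chi(A)\le f_T(w-1)<\ell$ by induction, a contradiction. Any non-vacuous approach to the complete-pairs conjecture is therefore equivalent in strength to a proof of \cref{conj:gs}, which is the principal difficulty the paper is confronting; for a forest $T$ where \cref{conj:gs} is still open, the above vacuous argument is all that can presently be said, and a non-vacuous proof would simultaneously resolve \cref{conj:gs} for $T$.
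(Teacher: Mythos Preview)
Your proposal is correct and matches the paper's own approach: the paper states only that the equivalence with \cref{conj:gs} ``can be done by induction on $w$ and we omit it,'' and your argument supplies precisely that induction (together with the trivial forward direction). The vacuous forward implication and the clique-number drop via a complete pair in the backward implication are exactly the intended steps.
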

(The proof of equivalence can be done by induction on $w$ and we omit it.)

	Let us see how \cref{thm:mainll} gives \cref{thm:llp5}.
	In what follows, 
	a {\em blockade} in a graph $G$ is a sequence $(B_1,\ldots,B_k)$ of disjoint (and possibly empty) subsets of $V(G)$, where each $B_i$ is a {\em block} of the blockade;
	and this blockade is {\em complete} in $G$ if $B_i$ is complete to $B_j$ for all distinct $i,j\in[k]$.
\begin{proof}
	[Proof of \cref{thm:llp5}, assuming \cref{thm:mainll}]
	Let $b\ge4$ be given by \cref{thm:mainll}.
	We claim that $d:=2b$ suffices.
	To see this,
	let $f(w):=w^{d\log w/\log\log w}$ for all $w\ge3$.
	We will prove by induction on $w\ge3$ that $\chi(G)\le f(w)$ for
	every $P_5$-free graph $G$ with clique number at most $w\ge 3$.
	If $w\le 16$ then $\chi(G)\le 3^w\le w^8\le w^d\le f(w)$ by \cref{thm:gs} and the choice of $d$; so we may assume $w\ge 16$.
	
	Let $k\ge0$ be maximal such that there is a complete blockade $(B_0,B_1,\ldots,B_k)$ in $G$
	with $\chi(B_k)\ge 2^{-bk}\chi(G)$ and $\chi(B_{i-1})\ge w^{-2b}\chi(G)$ for all $i\in[k]$;
	such a $k$ exists since this is satisfied for $k=0$ with $B_0=V(G)$.
	If $k<\log w$,
	then $\chi(B_k)\ge 2^{-bk}\chi(G)\ge w^{-b}\chi(G)\ge w^{d-b}\ge2$.
	The choice of $b$
	yields a complete pair $(A,B)$ in $G[B_k]$ with $\chi(A)\ge w^{-b}\chi(B_k)\ge w^{-2b}\chi(G)$
	and $\chi(B)\ge 2^{-b}\chi(B_k)\ge 2^{-b(k+1)}\chi(G)$.
	Hence the blockade $(B_0,B_1,\ldots,B_{k-1},A,B)$ violates the maximality of $k$.
	Therefore $k\ge \log w$;
	and so there exists $i\in\{0,1,\ldots,k-1\}$ such that $G[B_i]$ has clique number at most $w/\log w$.
	Let $y:=w/\log w\in[4,w)$ (note that $w\ge16$).
	Since the function $x\mapsto \log x/\log\log x$ is increasing on $[4,\infty)$, we see that
	\begin{align*}
		\log(f(w))-\log(f(y))
		&=d\left(\frac{(\log w)^2}{\log\log w}-\frac{(\log y)^2}{\log\log y}\right)\\
		&\ge d\left(\frac{(\log w)^2}{\log\log w}-\frac{\log w\log y}{\log\log w}\right)
		= \frac{d\log w\log\frac wy}{\log\log w}
		=d\log w.
	\end{align*}
	Thus, by the choice of $d$ and by induction, we obtain
	$\chi(G)\le w^{2b}\chi(B_i)\le w^{2b}f(y)\le w^{2b-d}f(w)\le f(w)$.
	This proves \cref{thm:llp5}.
\end{proof}
	Next, we say that a forest $T$ is {\em poly-$\chi$-bounding} if it satisfies \cref{conj:pgs}.
	Given the undecidability of this property for $P_5$,
	it is natural to ask whether it holds for all $P_5$-free forests $T$.
	Scott, Seymour, and Spirkl~\cite{MR4472775} did this for all $P_5$-free {\em trees}; these are the {\em double stars} which are graphs obtained from $P_4$ by substituting each leaf by an arbitrary edgeless graph.
	The case of general $P_5$-free forests -- disjoint unions of double stars -- remains open, because the poly-$\chi$-bounding property is not known to be closed under disjoint unions (on the other hand, it is not hard to show that the family of forests satisfying \cref{conj:gs} has this property).
	We say that a tree $T$ is {\em addible} if the following holds: if $T$ is poly-$\chi$-bounding, then for every poly-$\chi$-bounding forest $J$, the disjoint union of $T$ and $J$ is also poly-$\chi$-bounding.
	In order to prove that every $P_5$-free forest is poly-$\chi$-bounding,
	it remains to show that every double star is addible.
	Known partial results~\cite{MR4472774,MR4560487} in this direction include:
	\begin{theorem}
		[Scott--Seymour--Spirkl]
		\label{thm:starad}
		Every star is addible.
	\end{theorem}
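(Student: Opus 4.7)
\bigskip
\noindent\textit{Proof proposal.} Let $T = K_{1,t}$, assumed poly-$\chi$-bounding with polynomial $\chi$-binding $g$, and let $J$ be any poly-$\chi$-bounding forest with polynomial $\chi$-binding $h$. The plan is to prove by induction on $w = \omega(G)$ that every $(T \cup J)$-free graph $G$ satisfies $\chi(G) \le f(w) \le w^{c}$ for some $c = c(t, J)$. The key structural observation is the \emph{anti-star property}: if $G$ contains an induced copy $T_0$ of $T$ with vertex set $V(T_0) = \{v_0, u_1, \ldots, u_t\}$, then the set $M := V(G) \setminus \bigcup_{x \in V(T_0)} N_G[x]$ of vertices anticomplete to $V(T_0)$ is $J$-free (otherwise an induced $J$ in $M$ together with $T_0$ would give an induced $T \cup J$), so $\chi(G[M]) \le h(w)$.

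A natural dichotomy drives the induction. Call a vertex $v$ of $G$ \emph{good} if $G[V(G) \setminus N_G[v]]$ is $T$-free. If some $v$ is good, then since $G[N_G(v)]$ is $(T \cup J)$-free with clique number at most $w-1$, induction gives $\chi(G[N_G(v)]) \le f(w-1)$, while $T$-freeness of the non-neighborhood gives $\chi(G[V(G) \setminus N_G[v]]) \le g(w)$; hence
\[
	\chi(G) \le 1 + \chi(G[N_G(v)]) + \chi(G[V(G) \setminus N_G[v]]) \le 1 + f(w-1) + g(w),
\]
a \emph{linear} recursion that unrolls to a polynomial bound $f(w) = w^{O(1)}$.

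The main obstacle is the bad case, in which every vertex lies in the anti-star of some induced copy of $T$. The naive bound $\chi(G) \le h(w) + (t+1)(1 + f(w-1))$ unrolls to the \emph{exponential} recursion $f(w) = O((t+1)^w h(w))$. To bypass it, I would aim for the following covering lemma: in the bad case, $V(G)$ can be covered by $w^{O(1)}$ anti-stars of induced $T$-copies in $G$, each having chromatic number at most $h(w)$, yielding $\chi(G) \le w^{O(1)}$ directly. Proving this covering lemma is the principal technical challenge; I expect the argument to combine a pigeonhole over the $t+1$ positions a vertex can occupy within a containing induced $T$ with an iterative extraction reminiscent of the blockade construction in the proof of \cref{thm:llp5}, together with essential use of the Ramsey-type fact that a $K_{1,t}$-free graph of clique number at most $w$ has maximum degree less than $R(t, w+1) = O(w^{t-1})$ --- this gives both an explicit polynomial form of $g$ and a bounded-degree regime in which the covering can be constructed efficiently.
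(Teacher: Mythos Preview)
Your proposal has a genuine gap: the ``covering lemma'' you isolate in the bad case is the entire content of the theorem, and you have not proved it. The ideas you sketch do not lead there. In particular, the Ramsey bound on the maximum degree of a $K_{1,t}$-free graph is a fact about $T$-free graphs, but $G$ is only $(T\cup J)$-free; nothing prevents $G$ from having vertices of enormous degree, so there is no ``bounded-degree regime'' to exploit. The pigeonhole over the $t+1$ positions in a star and the vague ``iterative extraction'' do not produce a $w^{O(1)}$ cover either: different vertices may sit in anti-stars of entirely unrelated copies of $T$, and there is no structure forcing these anti-stars to be few. As you yourself observe, the naive recursion $f(w)\le h(w)+(t+1)(1+f(w-1))$ is exponential, and nothing you have written breaks the $(t+1)$ multiplicative factor.

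The paper's route (which is also essentially the original Scott--Seymour--Spirkl argument, reproduced as \cref{lem:star}) is different in kind. One does \emph{not} cover $V(G)$ by anti-stars. Instead, one picks a vertex $u$ of large degree, sets $A\subset N_G(u)$ with $\abs A\ge w^{t+2}$ and $B$ an appropriate disjoint set, and proves a dichotomy: either (i) some stable $t$-set $P\subset A$ is anticomplete to a set $Q\subset B$ with $\chi(Q)\ge w^{-t(t+2)}q$ (so $\{u\}\cup P$ is an induced $K_{1,t}$ anticomplete to $Q$, and $Q$ is $J$-free), or (ii) one can pass to $(X,Y)\subset(A,B)$ with $\omega(X)+\omega(Y)\le\omega(G)$ while losing only $w^{t+2}$ vertices from $A$ and chromatic number $q$ from $B$. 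Outcome (ii) is the key: it is then iterated to build disjoint pieces $E_0,\ldots,E_\ell$ with $\sum_i\omega(E_i)\le w$, forcing $\ell<w$; by pigeonhole some $E_i$ retains a $w^{-O(1)}$ fraction of the chromatic number, and the process terminates in outcome (i) after polynomially many losses. The device that replaces your exponential recursion is this \emph{additive clique-number budget} $\sum_i\omega(E_i)\le w$, not a reduction of $\omega$ by one at each step. Combined with the anticomplete-pair machinery of \cref{thm:anti}, this yields $\chi(G)\le w^{O(1)}\max(g(w),h(w))$.
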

	\begin{theorem}
		[Chudnovsky--Scott--Seymour--Spirkl]
		\label{thm:pathad}
		Every path is addible.
	\end{theorem}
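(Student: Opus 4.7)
Fix $k$ so that $P_k$ is poly-$\chi$-bounding with binding function $f_P$, and let $J$ be any poly-$\chi$-bounding forest with binding function $f_J$. Our goal is to show $T := P_k \cup J$ is poly-$\chi$-bounding. Let $G$ be $T$-free with $\omega(G) \le w$; we aim to prove $\chi(G) \le p(w)$ for some polynomial $p$. The central observation is the \emph{cap principle}: if $P$ is an induced copy of $P_k$ in $G$, then the set $V(G) \setminus N_G[V(P)]$ of vertices anticomplete to $V(P)$ must be $J$-free (else together with $P$ it would realise an induced $T$), so its chromatic number is at most $f_J(w)$. Dually, any induced subgraph $G[S]$ with $\chi(G[S]) > f_P(w)$ must itself contain an induced $P_k$, since $G[S]$ inherits $T$-freeness and hence $P_k$-freeness would contradict the hypothesis on $f_P$.

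Combining these two facts reformulates the problem as a \emph{pure-pair statement}: it suffices to establish that every $T$-free $G$ with $\chi(G)$ exceeding a suitable polynomial in $w$ admits disjoint and anticomplete subsets $A, B \subseteq V(G)$ with $\chi(A) > f_P(w)$ and $\chi(B) > f_J(w)$. Granted this, $A$ contains an induced $P_k$, $B$ contains an induced $J$, and anticompleteness assembles them into an induced $P_k \cup J = T$ in $G$, a contradiction. To construct such an anticomplete pair I plan an iterative refinement: start with any induced copy $P_0$ of $P_k$ (which exists when $\chi(G) > f_P(w)$); if its cap already has chromatic number exceeding $f_J(w)$, we are done. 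Otherwise $N_G[V(P_0)]$ absorbs the bulk of $\chi(G)$, so some vertex $v \in V(P_0)$ has $\chi(N_G(v))$ a controlled fraction of $\chi(G)$; I descend into $N_G(v)$, where $\omega \le w - 1$ and $T$-freeness is inherited, and iterate, gradually manufacturing either an induced copy of $P_k$ anticomplete to a large-$\chi$ set or a smaller sub-problem to recurse on.

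The main obstacle is securing a \emph{polynomial}, rather than exponential, bound on $\chi(G)$. A naive induction on $\omega$ loses a multiplicative factor of at least $k$ per step and yields only $\chi(G) = O(k^w)$. To close the recursion polynomially, the refinement step must instead deliver a pure pair whose two sides each retain $\chi$ proportional to $\chi(G)/w^{O(1)}$, in the spirit of \cref{thm:mainll}; I would lean on the pure-pair technology developed in the Scott--Seymour--Spirkl series, adapted here by using the $P_k$-component of $T$ as a template to anchor the anticomplete side and the $J$-component to furnish the $J$-freeness of the cap. Carrying out this polynomial-quality pure-pair reduction inside the $(P_k \cup J)$-free setting, and terminating the iteration after polynomially many steps via both $f_P$ and $f_J$, is the technical heart of the proof and the step that demands the most care.
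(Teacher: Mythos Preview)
Your reduction is exactly right and matches the paper's: once you know that every $(P_k\cup J)$-free graph $G$ with $\omega(G)\le w$ contains an anticomplete pair $(A,B)$ with $\chi(A),\chi(B)\ge w^{-d}\chi(G)$ for some $d$ depending only on $k$, addibility follows in one line, since then $\chi(G)>w^d\max(f_P(w),f_J(w))$ forces an induced $P_k$ in $A$ and an induced $J$ in $B$. The paper frames this identically (this is \cref{thm:anti} with $t=1$), and your ``cap principle'' corresponds to the second outcome of \cref{thm:anti}.

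The gap is that you have not proved the key lemma, and your sketched route to it does not work. Your iterative step---find a $P_k$, if its cap is small then $N_G[V(P_0)]$ carries most of $\chi(G)$, descend into some $N_G(v)$ with $\omega\le w-1$---is precisely the induction-on-$\omega$ argument you yourself flag as giving only $k^{O(w)}$; saying you will ``lean on the pure-pair technology'' of Scott--Seymour--Spirkl is not a proof. The paper's actual mechanism is different from anything you describe: it introduces \emph{$k$-covering blockades} (\cref{lem:covering}), sequences $(D_1,\ldots,D_k,E)$ with $E$ anticomplete to $D_1\cup\cdots\cup D_{k-1}$, each $D_i$ layered over $D_{i-1}$, and a quantitative ``not-too-dense'' condition between $D_k$ and $E$. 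The existence of such a blockade with $\chi(D_k),\chi(E)\ge w^{-6k}\chi(G)$ is proved by induction on $k$, and the engine that keeps the loss polynomial is \cref{lem:iterate}: any $w^{-1}$-vivid blockade has length at most $w$, so one cannot keep finding ``near-complete'' pairs indefinitely, and the failure of near-completeness is what grows the covering blockade. None of this appears in your plan; you would need to supply some argument of comparable strength before the proof can be considered complete.
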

	\cref{thm:starad} implies that every star forest is poly-$\chi$-bounding, and \cref{thm:pathad} particularly implies that every disjoint union of copies of $P_4$ is poly-$\chi$-bounding.
	
	In what follows, for integers $k,t\ge1$, a {\em $(k,t)$-broom} is the graph obtained by substituting a $t$-vertex edgeless graph  for a leaf of the $(k+1)$-vertex path;
	and a {\em $t$-broom} is a $(3,t)$-broom.
	The second goal of this paper is to unify \cref{thm:starad,thm:pathad} and in turns shows that all disjoint unions of $t$-brooms are poly-$\chi$-bounding, as follows:
	\begin{theorem}
		\label{thm:broom}
		For all integers $k,t\ge1$, the $(k,t)$-broom is addible.
	\end{theorem}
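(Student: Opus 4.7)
The plan is to combine a Gy\'arf\'as-path iteration with a leaf-harvesting step in the spirit of the proof of \cref{thm:starad}, using the polynomial pure-pair machinery developed in this paper. Let $T$ denote the $(k,t)$-broom with path $p_1,\dots,p_k$ and leaves $l_1,\dots,l_t$ attached to $p_k$; assume $T$ is poly-$\chi$-bounding, and let $J$ be any poly-$\chi$-bounding forest with $\chi$-binding function $g$. Let $G$ be a $(T\cup J)$-free graph with clique number $w$. The pivotal observation is that for any induced copy of $T$ in $G$ on vertex set $X$, the subgraph $G-N[X]$ is $J$-free (else $T\cup J$ would be induced in $G$), hence $\chi(G-N[X])\le g(w)$.

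The strategy is either to find an induced copy of $T$ whose closed neighborhood fits into a polynomial blockade argument in the style of the deduction of \cref{thm:llp5} from \cref{thm:mainll}, or to show directly that $\chi(G)$ is polynomial. I would construct such a $T$ in two phases. First, a Gy\'arf\'as-style iteration of length $k-1$ produces an induced path $p_1,\dots,p_k$ together with a residual set $A\subset N(p_k)\setminus N[\{p_1,\dots,p_{k-1}\}]$ whose chromatic number is within a polynomial factor of $\chi(G)$. Second, at the endpoint we distinguish two cases. If $\alpha(G[A])\ge t$, pick a stable set of size $t$ in $A$ to complete the copy of $T$, and then iterate in blockade fashion to recover a polynomial bound on $\chi(G)$. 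If $\alpha(G[A])\le t-1$, then the Ramsey bound $\abs{A}\le R(w+1,t)=O(w^{t-1})$ (polynomial in $w$ for fixed $t$) directly yields $\chi(G[A])\le\abs{A}=O(w^{t-1})$, which via the Gy\'arf\'as iteration backpropagates to a polynomial bound on $\chi(G)$.

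The main obstacle is the first case, where an induced broom $T$ is found but $\chi(N[X])$ need not be polynomially bounded a priori. A naive vertex-by-vertex decomposition would only give $\chi(N[X])\le\sum_{v\in X}(F(w-1)+1)$ where $F$ is the target binding function; since $|X|=k+t$, iterating this on $w$ yields only an exponential bound. To recover a polynomial bound, I would employ the blockade-iteration argument from the proof of \cref{thm:llp5}, modified to maintain both the path prefix and the leaf bouquet of the broom across iterations, so that each blockade extension costs only a polynomial factor in $w$ and after $O(\log w)$ iterations the clique number drops by at least one. This step will closely parallel the proofs of \cref{thm:starad,thm:pathad}, and the final exponent will depend polynomially on $k$ and $t$ in a manner unifying the two.
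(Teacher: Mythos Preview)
Your opening observation is correct and matches the paper's: once you have an induced copy of $T$ on $X$ anticomplete to a set $Q$, then $G[Q]$ is $J$-free and $\chi(Q)\le g(w)$. The genuine gap is in how you propose to guarantee that such a $Q$ has chromatic number polynomially close to $\chi(G)$. You correctly identify that the naive bound $\chi(N[X])\le (k+t)(F(w-1)+1)$ only yields an exponential recursion, but your proposed fix does not work. The blockade argument in the proof of \cref{thm:llp5} succeeds because it builds a \emph{complete} blockade, and a complete blockade of length $\ell$ forces $\omega\ge\ell$; that is the only reason the clique number drops after $\log w$ steps. There is no analogous mechanism for ``maintaining the path prefix and leaf bouquet across iterations'': repeatedly finding brooms inside $N[X]$ gives you no control on clique number, and nothing in your sketch explains why a polynomial-cost extension step should exist. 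This is not a technical detail to be filled in later; it is precisely the heart of the problem, and your proposal does not address it.

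The paper's route is different in a crucial way: rather than finding an arbitrary broom and then trying to bound $\chi(N[X])$, it proves \cref{thm:anti}, which directly produces either a polynomially high-$\chi$ anticomplete pair $(A,B)$ or a broom $P$ already anticomplete to a set $Q$ with $\chi(Q)\ge w^{-d}\chi(G)$. From this, addibility is a one-line deduction (in the first outcome one of $G[A],G[B]$ is $T$-free or $J$-free; in the second $G[Q]$ is $J$-free). The real work is in the proof of \cref{thm:anti}, which does use a Gy\'arf\'as-type construction (the $k$-covering blockades of \cref{lem:covering}) for the path part, but then, inside the last block, iterates the Scott--Seymour--Spirkl star lemma (\cref{lem:star}) to force a stable $t$-set $P$ anticomplete to a high-$\chi$ $Q$; the iteration is capped because the clique numbers of the pieces sum to at most $w$. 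This clique-sum bookkeeping, together with the controlled-subgraph lemma (\cref{lem:control}) to ensure no single vertex dominates the chromatic number, is what replaces your missing step and has no counterpart in your plan.
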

	
	The method of proof of \cref{thm:broom} is via the following, which particularly implies that every $(k,t)$-broom-free graph contains polynomially high-chromatic anticomplete pairs:
	\begin{theorem}
		\label{thm:anti}
		For every $k,t\ge1$, there exists $d\ge1$ such that every non-complete graph $G$ with clique number $w\ge2$ contains either:
		\begin{itemize}
			\item an anticomplete pair $(A,B)$ with $\chi(A),\chi(B)\ge w^{-d}\chi(G)$; or
			
			\item an anticomplete pair $(P,Q)$ where $G[P]$ is a $(k,t)$-broom and $\chi(Q)\ge w^{-d}\chi(G)$.
		\end{itemize}
	\end{theorem}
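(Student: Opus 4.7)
The plan is to prove \cref{thm:anti} by induction on $k$, building the handle of the broom one vertex at a time while maintaining a large-chromatic anticomplete reservoir $Q$.

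For the base case $k=1$, the broom is the star $K_{1,t}$. Fix any $v_0\in V(G)$ and consider the BFS levels $L_0=\{v_0\},\,L_1=N_G(v_0),\,L_2,\ldots$. Since levels with index gap $\ge 2$ are anticomplete, if two such levels both have chromatic number at least $w^{-c}\chi(G)$ for a suitable constant $c$, they form the desired anticomplete pair. Otherwise, at most one level per parity class is chromatically large, so the chromatic number is essentially concentrated in a short range of consecutive levels. Within the dominant range we find a vertex $u$ whose neighborhood is polynomially chromatically large; since $\omega(G)\le w$, a Ramsey-type bound yields a stable set of size $t$ inside that neighborhood, producing the required $K_{1,t}$. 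The reservoir $Q$ is then extracted from a level sufficiently far from $u$ and its bristles, ensuring it is anticomplete to the whole star.

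For the inductive step, assume the theorem holds for $(k-1,t)$-brooms with constant $d_{k-1}$. Apply the induction hypothesis to $G$ itself. If the anticomplete-pair outcome holds we are done; otherwise we obtain a $(k-1,t)$-broom $P$ anticomplete to $Q$ with $\chi(Q)\ge w^{-d_{k-1}}\chi(G)$. Let $h$ be the handle tip of $P$ (opposite the bristles), and set $A:=N_G(h)\setminus N_G[V(P)\setminus\{h\}]$; for every $v\in A$, $G[V(P)\cup\{v\}]$ is a $(k,t)$-broom extending $P$. If there exists $v\in A$ with $\chi(Q\setminus N_G(v))\ge w^{-d}\chi(G)$ for an appropriate $d$, we are done: this $v$ gives the required $(k,t)$-broom with reservoir $Q\setminus N_G(v)$.

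The \emph{dense case} is when every $v\in A$ has $N_G(v)\cap Q$ chromatically absorbing nearly all of $Q$. We handle this using two observations. First, if $\chi(Q)$ is large then $G[Q]$ is non-complete, so applying the induction hypothesis inside $G[Q]$ yields either an anticomplete pair (automatically anticomplete in $G$) or a further $(k-1,t)$-broom $P'\subseteq Q$ with a new reservoir $Q'\subseteq Q$. Second, for any $v\in A$ the subgraph $G[N_G(v)\cap Q]$ has clique number at most $w-1$, since $v$ is a common neighbor. We iterate these observations, aiming either to extend $P'$ using a vertex in $A$ (producing a $(k,t)$-broom with anticomplete reservoir) or to separate $A$ chromatically from a portion of $Q$ (producing the desired anticomplete pair). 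The main obstacle is this dense case: controlling the polynomial chromatic losses accumulating across the nested applications of the induction hypothesis and the clique-number reductions, so that the final exponent $d(k,t)$ remains finite. This requires carefully calibrating the extension threshold and the chromatic tolerance so that the losses in the successful and dense cases balance.
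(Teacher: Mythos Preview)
Your inductive scheme has a genuine gap at the extension step. Having obtained a $(k-1,t)$-broom $P$ anticomplete to $Q$, you set $A:=N_G(h)\setminus N_G[V(P)\setminus\{h\}]$ and then split into cases according to how vertices of $A$ relate to $Q$. But nothing in the induction hypothesis guarantees that $A$ is nonempty, let alone chromatically large: the handle tip $h$ may have no neighbours outside $N_G[V(P)\setminus\{h\}]$ at all. Since $P$ is anticomplete to $Q$, no vertex of $Q$ lies in $A$ either, so your reservoir does not help populate $A$. With $A=\emptyset$ neither the ``sparse'' nor the ``dense'' branch applies, and the argument stalls.

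Even granting $A\ne\emptyset$, your treatment of the dense case does not close. You propose to apply the induction hypothesis inside $Q$ to obtain a new broom $P'\subset Q$ and then ``extend $P'$ using a vertex in $A$''; but vertices of $A$ are neighbours of the tip of $P$, not of $P'$, and there is no reason any $v\in A$ should be adjacent to the tip of $P'$ while avoiding the rest of $P'$. The clique-number drop in $N_G(v)\cap Q$ is correct but inert: you are not inducting on $w$, and a single drop to $w-1$ gives no leverage without further structure. This is exactly why the paper does \emph{not} grow the handle one vertex at a time. Instead it builds the whole path via a $k$-covering blockade $(D_1,\ldots,D_k,E)$ (Lemma~\ref{lem:covering}), whose defining property is that for every large $X\subset D_k$ and large $Y\subset E$ there exists $u\in X$ with $\chi(Y\setminus N_G(u))\ge w^{-1}\chi(Y)$; this is precisely the built-in guarantee of an extending vertex with a large anticomplete remainder that your naive induction lacks. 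The bristles are then produced inside $E$ by a separate iteration (Lemma~\ref{lem:star}) combined with a controlled, high-minimum-degree subgraph.
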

	This high-chromatic anticomplete pairs result is inspired by a conjecture of El-Zahar and Erd\H os~\cite{MR845138} that says graphs with huge chromatic number and bounded clique number contains a high-chromatic anticomplete pair
	(but not necessarily linear in  the chromatic number of the graphs in question; see~\cite{MR4676642} for some partial results on this problem),
	and a result of Liebenau, Pilipczuk, Seymour, and Spirkl~\cite{MR3926277} 
	on such pairs in graphs with no induced {\em caterpillar} (with exponential dependence on the clique number),
	which is a tree obtained from a path by joining new vertices of degree one to the vertices on the path.
	Since every $(k,t)$-broom is a caterpillar, it could be true that \cref{thm:anti} holds for caterpillars, and more generally for all forests.

	The rest of the paper is organised as follows.
	In \cref{sec:warmup}, we provide arguments and results which would help explain the ideas and methods presented in later parts of the paper; along the way, we obtain new proofs of several known results in the literature.
	Then we present the proof of \cref{thm:anti} in \cref{sec:broom},
	and the proof of \cref{thm:mainll} in \cref{sec:llp5}.
	We remark that the proof of \cref{thm:mainll} uses \cref{thm:ehp5} and the $P_5$ case of \cref{thm:anti}.
\section{Some expository arguments}
\label{sec:warmup}
\subsection{Excluding a $t$-broom}
\label{sec:tbroom}
To illustrate some ideas employed in the rest of this paper, let us give a short proof of the polynomial $\chi$-boundedness of the class of $t$-broom-free graphs,
which was first proved by Liu, Schroeder, Wang, and Yu~\cite{MR4589655}
via a variant of the \dd template\ee{} method introduced by Gy\'arf\'as, Szemer\'edi, and Tuza~\cite{MR573638}.
This method was used by Kierstead and Penrice~\cite{MR1258244} to prove the \gas{} conjecture \ref{conj:gs} for trees of radius two,
and adapted by Scott, Seymour, and Spirkl~\cite{MR4472775} to show that all double stars are poly-$\chi$-bounding.
The argument in~\cite{MR4589655} gives the $\chi$-binding function $Cw^2R(t,w)$ where $C>t^2$ depends on $t$ only and $R(t,w)$ is the least integer $n\ge1$ such that every $n$-vertex graph has a stable set of size $t$ or a clique of size $w$ (the standard Ramsey number).
Building on high-chromatic anticomplete and \dd near-complete\ee{} pairs, our proof yields the explicit $\chi$-binding function $2w^2R(t,w)$.
We begin with:
\begin{lemma}
\label{lem:tbroom}
Every $t$-broom-free graph $G$ with clique number $w\ge2$ has nonempty disjoint $S,P\subset V(G)$ with
$\chi(G)\le\chi(S)+\chi(P)$,
$\chi(P)\ge w^{-1}\chi(G)$,
and $\chi(P\setminus N_G(u))\le 2R(t,w)-1$ for all $u\in S$.
\end{lemma}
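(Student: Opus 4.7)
The plan is to set $P := N_G(v)$ and $S := V(G) \setminus N_G(v)$ for a vertex $v$ chosen to maximise $\chi(N_G(v))$, so that $V(G) = S \cup P$ disjointly, both sets are nonempty ($v \in S$), and the inequality $\chi(G) \leq \chi(S) + \chi(P)$ is automatic. For $u = v \in S$, the last condition reads $\chi(P \setminus N_G(v)) = \chi(\emptyset) = 0 \leq 2R(t,w) - 1$, which is trivial, so only non-neighbours of $v$ in $S$ merit attention.

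First I would secure $\chi(P) \geq \chi(G)/w$. This does \emph{not} follow for an arbitrary graph with clique number $w$ (triangle-free Mycielski-type graphs have $\chi(N_G(v)) = 1$ for every vertex yet arbitrarily large $\chi(G)$), so the $t$-broom-free hypothesis is essential here. I would establish it as a preliminary claim: if every vertex of $G$ had $\chi(N_G(\cdot)) < \chi(G)/w$, then a Gy\'arf\'as-style BFS layering together with Ramsey (exploiting that every neighbourhood has clique number at most $w-1$) would produce a configuration $u-x_2-v$ with many independent neighbours of $v$ avoiding $N_G(u)$, i.e.\ an induced $t$-broom.

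Next I would verify $\chi(P \setminus N_G(u)) \leq 2R(t,w) - 1$ for each non-neighbour $u$ of $v$ in $S$, arguing by contradiction. Assume $\chi(Q) \geq 2R(t,w)$ with $Q := N_G(v) \setminus N_G(u)$. Since $Q \subseteq N_G(v)$ and any clique in $Q$ would extend by $v$ to a larger clique of $G$, we have $\omega(G[Q]) \leq w - 1$. By subadditivity of $\chi$ I split $Q = Q_1 \cup Q_2$ with $\chi(Q_j) \geq R(t,w)$ for $j = 1, 2$, and Ramsey applied to each $Q_j$ (using $\omega(G[Q_j]) \leq w - 1$ and $|Q_j| \geq R(t,w)$) yields independent sets $I_j \subseteq Q_j$ of size $t$.

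The hard part will be finishing the contradiction by exhibiting an induced $t$-broom. Concretely I want a vertex $x_2 \in N_G(v) \cap N_G(u)$ non-adjacent to some $I_j$; then the induced path $u - x_2 - v$ together with the $t$ leaves $I_j$ at $v$ is an induced $t$-broom, contradicting the hypothesis on $G$. The obstacle is the degenerate scenario in which every common neighbour of $u$ and $v$ has a neighbour in both $I_1$ and $I_2$; ruling this out should use the maximality of $\chi(N_G(v))$ (which bounds $\chi(N_G(x_2))$ for any candidate $x_2$) together with the two disjoint high-chromatic pieces $Q_1, Q_2$, forcing either a $t$-broom at $v$ or one rooted at another vertex.
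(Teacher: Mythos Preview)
Your construction fails at the first step. You claim that in a $t$-broom-free graph with $\omega(G)\le w$ some vertex has $\chi(N_G(v))\ge\chi(G)/w$, but for $t\ge2$ the cycle $C_5$ is a counterexample: it is $t$-broom-free (its maximum degree is $2$, while a $t$-broom contains a vertex of degree $t+1\ge3$), has $\omega=2$ and $\chi=3$, yet every neighbourhood is stable and so $\chi(N_G(v))=1<3/2$ for all $v$. No Gy\'arf\'as-type argument will produce the bound you want, because the bound is simply false. Your second step is in trouble independently of this: you need every non-neighbour $u$ of $v$ to satisfy $\chi(N_G(v)\setminus N_G(u))\le 2R(t,w)-1$, but if $u$ lies at distance at least $3$ from $v$ (or in another component) then $N_G(v)\setminus N_G(u)=N_G(v)$ and there is no common neighbour $x_2$ to serve as the middle of your broom handle. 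The obstacle you flag at the end is therefore not a technicality but a structural defect of the setup.

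The paper chooses $P$ and $S$ very differently. One first produces a high-$\chi$ \emph{anticomplete} pair: pick $v$ in a maximum clique $K$ so that the non-neighbours of $v$ outside $K$ have chromatic number at least $w^{-1}\chi(G)$, and let $P$ be a component there, paired with some other connected $Q$ anticomplete to $P$. After maximising $\chi(P)+\chi(Q)$ and then $\abs P+\abs Q$, one takes $S$ to be a minimal cutset separating $P$ and $Q$; then $G[P],G[Q]$ are components of $G\setminus S$, giving $\chi(G)\le\chi(S)+\chi(P)$, and every $u\in S$ has a neighbour $z\in Q$ that is anticomplete to all of $P$. This vertex $z$ is precisely the ingredient your approach lacks: it anchors the tail of the broom. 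The bound on $\chi(P\setminus N_G(u))$ then comes from a degeneracy argument rather than from Ramsey applied directly to $P\setminus N_G(u)$: if $G[P\setminus N_G(u)]$ contained an induced subgraph $F$ of minimum degree at least $2R(t,w)-1$, a shortest path in $G[P]$ from $N_G(u)\cap P$ to $V(F)$, prefixed by $z\text-u$, together with a Ramsey argument inside a single $F$-neighbourhood, would yield an induced $(k,t)$-broom with $k\ge3$.
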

\begin{proof}
We may assume $G$ is connected.
If $G$ is complete, then we are done by taking $S,P$ as two distinct singletons (this is doable since $w\ge2$).
Thus we may also assume $G$ is non-complete.
Let $K$ be a maximum clique in $G$;
and for every $v\in K$, let $P_v:=V(G)\setminus(K\cup N_G(v))$.
Then $\bigcup_{v\in K}P_v=V(G)\setminus K$, which gives $v\in K$ with 
$$\chi(P_v\cup\{v\})\ge \abs K^{-1}\chi(G)\ge w^{-1}\chi(G)>1=\chi(\{v\}).$$
Hence $\chi(P_v)=\chi(P_v\cup\{v\})\ge w^{-1}\chi(G)$.
By taking a component of $P_v$ with chromatic number at least $w^{-1}\chi(G)$,
we obtain an anticomplete pair $(P,Q)$ in $G$ such that $G[P],G[Q]$ are connected, $\chi(P)\ge w^{-1}\chi(G)$, and $\chi(P)\ge\chi(Q)\ge1$.
Among all such pairs $(P,Q)$, choose $P,Q$ with $\chi(P)+\chi(Q)$ is maximal; and subject to this, with $\abs P+\abs Q$ maximal.
Since $G$ is connected, there is a minimal nonempty cutset $S$ separating $P,Q$ in $G$.
By the maximality of $(P,Q)$,
$G[P],G[Q]$ are components of $G\setminus S$, every vertex in $S$ has a neighbour in each of $P,Q$,
and $\chi(G\setminus S)=\chi(P)$.
Thus
$\chi(G)\le \chi(S)+\chi(G\setminus S)=\chi(S)+\chi(P)$.
(We remark that this type of argument will appear frequently in \cref{sec:llp5}.)

In what follows, the {\em degeneracy} of a graph $G$ is the least integer $d\ge0$ for which there is an ordering $(v_1,\ldots,v_n)$ of $V(G)$ such that for all $i\in[n]$, $v_i$ has at most $d$ neighbours in $\{v_{i+1},\ldots,v_n\}$;
in other words, the degeneracy of $G$ is the least integer $d\ge0$ for which every induced subgraph of $G$ has minimum degree at most $d$.
By greedy colouring, the degeneracy of $G$ is at least $\chi(G)-1$.
Now we use the $t$-broom-freeness of $G$ to show that $S$ is \dd near-complete\ee{} to $P$:
\begin{claim}
	\label{claim:tbroom}
	For each $u\in S$, $G[P\setminus N_G(u)]$ has degeneracy at most $2(R(t,w)-1)$.
\end{claim}
\begin{subproof}
	Suppose not; then there is an induced subgraph $F$ of $G[P\setminus N_G(u)]$ with minimum degree at least $2R(t,w)-1$.
	Since $G[P]$ is connected and $N_G(u)$ is nonempty,
	there is a shortest path $v_1\text-\cdots\text-v_k$ from $N_G(u)$ to $V(F)$ in $P$;
	in particular $v_1\in N_G(u)$, $v_k\in V(F)$, and none of $v_1,\ldots,v_{k-2}$ has a neighbour in $V(F)$.
	Note that $\omega(N_F(v_k))<w$.
	Let $z\in N_G(u)\cap Q$.
	If $v_{k-1}$ has at least $R(t,w)$ neighbours in $N_F(v_k)$,
	then it is complete to a stable set $T\subset N_F(v_k)$ with $\abs T=t$;
	and so
	$\{z,u,v_1,\ldots,v_{k-1}\}\cup T$ would induce a $(k+1,3)$-broom in $G$, a contradiction since $k\ge2$.
	Thus $v_{k-1}$ has at least $d_F(u)-(R(t,w)-1)\ge R(t,w)$ nonneighbours in $N_F(v_k)$;
	and so $v_{k-1}$ is anticomplete to a stable set $T\subset N_F(v_k)$ with $\abs T=t$.
	But then $\{z,u,v_1,\ldots,v_k\}\cup T$ would induce a $(k+2,3)$-broom in $G$, a contradiction.
	This proves \cref{claim:tbroom}.
\end{subproof}
\cref{claim:tbroom} yields $\chi(P\setminus N_G(u))\le 2R(t,w)-1$ for all $u\in S$.
This completes the proof of \cref{lem:tbroom}.
\end{proof}
We now show that every $t$-broom is poly-$\chi$-bounding with $\chi$-binding function $2w^2R(t,w)$:
\begin{theorem}
\label{thm:tbroom}
Every $t$-broom-free graph $G$ with clique number $w\ge1$ satisfies $\chi(G)\le 2w^2R(t,w)$.
\end{theorem}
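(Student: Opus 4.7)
The plan is to induct on $w=\omega(G)$. The base case $w=1$ is immediate, as $G$ is then edgeless and $\chi(G)\le 1\le 2R(t,1)=2$. For $w\ge 2$, I would apply \cref{lem:tbroom} to $G$ to obtain disjoint nonempty $S,P\subseteq V(G)$ satisfying $\chi(G)\le\chi(S)+\chi(P)$, $\chi(P)\ge\chi(G)/w$, and $\chi(P\setminus N_G(u))\le 2R(t,w)-1$ for every $u\in S$.

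The central idea is to choose a maximum clique $C\subseteq S$ and set $k:=w-|C|\in\{0,1,\ldots,w-1\}$ (nonemptiness of $S$ forces $|C|\ge 1$). Split $P$ into the common neighborhood $T:=P\cap\bigcap_{u\in C}N_G(u)$ and its complement in $P$. The union bound applied to the near-complete condition gives $\chi(P\setminus T)\le|C|(2R(t,w)-1)=(w-k)(2R(t,w)-1)$. Since $T$ is complete to the clique $C$, we have $\omega(T)+|C|\le\omega(G)=w$, hence $\omega(T)\le k$.

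If $k=0$, then $\omega(T)\le 0$ forces $T=\emptyset$, so $\chi(P)\le w(2R(t,w)-1)$, and the bound $\chi(G)\le w\chi(P)\le 2w^2R(t,w)$ from the Lemma closes this case. If $k\ge 1$, the induction hypothesis applied to $G[S]$ (clique number $w-k<w$) and to $G[T]$ (clique number at most $k<w$) gives $\chi(S)\le 2(w-k)^2R(t,w-k)$ and $\chi(T)\le 2k^2R(t,k)$. Combining via $\chi(G)\le\chi(S)+\chi(T)+\chi(P\setminus T)$ and using the monotonicity $R(t,k),R(t,w-k)\le R(t,w)$, it suffices to verify
\[
(w-k)^2+k^2+(w-k)\le w^2,
\]
which rearranges to $(w-k)(2k-1)\ge 0$; this is immediate for $w-k\ge 1$ and $k\ge 1$.

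I do not expect a substantive obstacle: the whole argument is a single application of \cref{lem:tbroom} combined with this clean arithmetic identity. The key conceptual point is that one should not split $P$ using a single vertex $u\in S$ (as one might naively) but rather using an entire maximum clique of $S$; this choice matches the bound $\omega(T)\le k$ to the reduction in clique number needed for the inductive accounting, letting $(w-k)^2+k^2$ fit under $w^2$.
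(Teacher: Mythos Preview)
Your proposal is correct and essentially matches the paper's own proof: both apply \cref{lem:tbroom}, take a maximum clique $C$ in $S$, split $P$ into the part complete to $C$ and its complement, and bound the three pieces $\chi(S),\chi(T),\chi(P\setminus T)$ via induction and the near-completeness estimate. The differences are cosmetic: the paper inducts on $|G|$ rather than on $w$, parametrizes by $q=|C|$ rather than $k=w-|C|$, and handles the degenerate case as ``$P\setminus T=\emptyset$'' rather than ``$k=0$''; the final arithmetic is packaged as $q^2+(w-q)^2\le(w-1)^2+1$ instead of your equivalent $(w-k)(2k-1)\ge0$.
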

\begin{proof}
We proceed by induction on $\abs G$.
We may assume $w\ge2$.
By \cref{lem:tbroom},
there are nonempty disjoint $S,P\subset V(G)$ with
$\chi(G)\le\chi(S)+\chi(P)$,
$\chi(P)\ge w^{-1}\chi(G)$,
and $\chi(P\setminus N_G(u))\le 2R(t,w)-1$ for all $u\in S$.
Let $C$ be a maximum clique in $G[S]$ and $q:=\abs C\ge1$.
Then $\chi(S)\le 2q^2R(t,q)\le 2q^2R(t,w)$ by induction.
Let $D$ be the set of vertices in $P$ with a nonneighbour in $C$;
then
$\chi(D)\le q(2R(t,w)-1)$ by \cref{claim:tbroom}.
If $P=D$ then
$w^{-1}\chi(G)\le\chi(P)=\chi(D)\le 2wR(t,w)$ and so $\chi(G)\le 2w^2R(t,w)$.
Thus we may assume $D\subsetneq P$.
Since $P\setminus D$ is complete to $C$,
we have $1\le\omega(P\setminus D)\le w-\abs C=w-q$;
and so $1\le q\le w-1$ and $\chi(P\setminus D)\le 2(w-q)^2R(t,w-q)\le 2(w-q)^2R(t,w)$ by induction.
Therefore
\[\begin{aligned}
	\chi(G)\le \chi(S)+\chi(P)
	&\le \chi(S)+\chi(P\setminus D)+\chi(D)\\
	&\le 2q^2R(t,w)+2(w-q)^2R(t,w)+q(2R(t,w)-1)\\
	&\le 2((w-1)^2+1)R(t,w)+(w-1)(2R(t,w)-1)\le 2w^2R(t,w)
\end{aligned}\]
where the penultimate inequality holds since $q^2+(w-q)^2\le(w-1)^2+1$ and $1\le q\le w-1$.
This completes the induction step and proves \cref{thm:tbroom}.
\end{proof}
\subsection{A general result}
In this subsection we will prove the following \dd quasi-polynomially high-chromatic and near-anticomplete pairs\ee{} result for excluding a general induced subgraph:
\begin{theorem}
	\label{thm:quasichi}
	For every graph $H$, every non-complete $H$-free graph $G$ with $w:=\omega(G)$ contains disjoint $A,B\subset V(G)$ such that $\chi(A),\chi(B)\ge w^{-2\abs H\log w}\chi(G)$ and $\chi(A\cap N_G(v))<w^{-1}\chi(A)$ for all $v\in B$.
\end{theorem}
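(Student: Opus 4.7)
My plan is an iterative construction of a partial induced copy of $H$ in $G$: each outer iteration either produces the desired near-anticomplete pair $(A, B)$ or extends the partial copy by one more vertex. Since $G$ is $H$-free, at most $\abs H$ extension steps can occur, and with a per-iteration chromatic-number loss of at most $w^{2\log w}$, the total loss matches the claimed bound $w^{-2\abs H\log w}$.

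Fix an ordering $V(H) = \{h_1, \ldots, h_{\abs H}\}$. The invariant at outer iteration $i$ is a tuple $(v_1, \ldots, v_i; A_i)$ such that $v_1, \ldots, v_i$ induce $H[\{h_1, \ldots, h_i\}]$ in $G$; $A_i \subseteq V(G) \setminus \{v_1, \ldots, v_i\}$ has uniform adjacency to each $v_j$ matching the $h_j h_{i+1}$-pattern in $H$, so that $A_i$ houses legitimate candidates for the role of $v_{i+1}$; and $\chi(A_i) \ge w^{-2i\log w}\chi(G)$. At the extension step, consider $T_i := \{v \in V(G) \setminus A_i : \chi(A_i \cap N_G(v)) < w^{-1}\chi(A_i)\}$. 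If $\chi(T_i) \ge w^{-2\abs H\log w}\chi(G)$, output $(A, B) := (A_i, T_i)$ and stop; the advertised conditions are then immediate. Otherwise, the vertices of $V(G) \setminus A_i$ with $\chi$-large neighborhood in $A_i$ dominate chromatically, so I pick $v_{i+1} \in A_i$ with $\chi(A_i \cap N_G(v_{i+1})) \ge w^{-1}\chi(A_i)$ and then refine to obtain $A_{i+1}$ of uniform adjacency to $\{v_1, \ldots, v_{i+1}\}$ matching the $h_{i+2}$-pattern. The refinement is produced by a Ramsey-style partition of the relevant vertex set by adjacency pattern to $\{v_1, \ldots, v_{i+1}\}$, combined with a blockade argument in the style of \cref{thm:llp5} of length $O(\log w)$ in which each restriction costs a $w$-factor in $\chi$.

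The main obstacle is maintaining the uniform-adjacency invariant across outer iterations: because the adjacency patterns of $h_{i+1}$ and $h_{i+2}$ to $\{h_1, \ldots, h_i\}$ may disagree, the refined set $A_{i+1}$ need not be a subset of $A_i$, and one must relocate into a different adjacency class (of a larger ambient set), paying a Ramsey-type constant-factor loss that is absorbed into the $w^{2\log w}$ per-iteration budget once $w$ is large enough. Coupling this relocation with the blockade alignment over $O(\log w)$ steps, and keeping the $\chi$-loss per outer iteration within $w^{2\log w}$, is the technical heart of the argument.
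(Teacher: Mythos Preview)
There is a genuine gap in your argument, and it lies exactly where you yourself flag the ``main obstacle'': the relocation step. Your invariant maintains a \emph{single} set $A_i$ carrying one specific adjacency pattern (that of $h_{i+1}$) to the already-embedded vertices $v_1,\ldots,v_i$. After choosing $v_{i+1}\in A_i$, the candidate set $A_{i+1}$ must carry the pattern of $h_{i+2}$ to $v_1,\ldots,v_{i+1}$, which in general forces $A_{i+1}$ to live \emph{outside} $A_i$. Nothing you have set up gives any lower bound on $\chi$ of that external class: your hypothesis that $\chi(T_i)$ is small only says that most vertices of $V(G)\setminus A_i$ have large $\chi$-neighbourhood \emph{into} $A_i$, which is orthogonal to the question of whether the particular adjacency class determined by $h_{i+2}$ is chromatically large. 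Concretely, already for $H=P_3$ with $h_1\text-h_2\text-h_3$: after picking $v_1$ and taking $A_1\subset N_G(v_1)$, the next candidate set must lie in $V(G)\setminus N_G(v_1)$, and nothing prevents $\chi(V(G)\setminus N_G(v_1))$ from being tiny. The ``Ramsey-style partition'' and the ``blockade argument in the style of \cref{thm:llp5}'' you invoke do not address this: a partition by adjacency type gives no lower bound on the specific cell you need, and the blockade in \cref{thm:llp5} concerns complete pairs and clique-number decrement, not switching adjacency profiles. There is also a logical slip: the smallness of $\chi(T_i)$ is a statement about vertices in $V(G)\setminus A_i$, yet you use it to justify choosing $v_{i+1}\in A_i$ with large $\chi$-neighbourhood in $A_i$; these are unrelated.

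The paper's proof circumvents this entirely by the standard Erd\H os--Hajnal device of maintaining $\abs H$ disjoint sets $A_1,\ldots,A_{\abs H}$ simultaneously, one per vertex of $H$ (\cref{lem:gen}). When embedding $v_1\in A_1$, one refines \emph{every} $A_j$ to $A_j\cap N_G(v_1)$ or $A_j\setminus N_G(v_1)$ according to whether $h_1h_j\in E(H)$; the point is that if any of these refinements loses more than an $\eps$-fraction, then the pair $(A_1,A_j)$ is already near-pure, which is the desired output. This is precisely the mechanism that guarantees the correct adjacency class stays large, and it is unavailable in a single-set scheme. The $\log w$ in the exponent then arises, not inside the embedding, but from a separate outer iteration (\cref{thm:quasi}) in which each ``near-complete'' outcome is used to split a block into two with smaller total clique number, doubling the number of blocks at most $\log w$ times. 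If you want to rescue your outline, you will need to replace the single set $A_i$ by a family of sets indexed by the not-yet-embedded vertices of $H$, and then the argument essentially becomes the paper's.
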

We remark that this \dd near-anticomplete\ee{} property, in general, cannot be turned to a full anticomplete one.
For instance, Raphael Steiner (private communication) observed that when $H$ is the triangle, one can consider the triangle-free process analyzed by Bohman~\cite{MR2522430} to obtain an $n$-vertex triangle-free graph with chromatic number at least $\Omega(\sqrt{n/\log n})$ and no anticomplete pairs of size at least $\Omega(n)$ (we omit the detailed calculations),
and so with no linear-chromatic anticomplete pairs since Ajtai, Koml\'os, and Szemer\'edi~\cite{MR600598} proved that every $m$-vertex triangle-free graph has chromatic number at most $O(\sqrt{m/\log m})$.
However, it could be true that a full anticomplete outcome holds when $H$ is a forest (see~\cite{MR4170220} for a related result on linear-sized anticomplete pairs), and in that case the term $w^{-2\abs H\log w}$ can be turned into $\operatorname{poly}(w^{-1})$.

\cref{thm:quasichi} can be used to deduce the fact that \dd near-Esperet\ee{} graphs are closed under disjoint union (we omit the proof),
where a graph $H$ is {\em near-Esperet} (defined in~\cite{MR4809068}) if there exists $d>0$ such that $\chi(G)\le w^{d\log w}$ for all $H$-free graphs $G$ with clique number $w$.
In fact, our argument in this subsection is robust enough to give a new proof (yielding similar bounds) of~\cite[Lemma 4.2]{MR4809068}, which immediately gave all of the results in~\cite[Section 4]{MR4809068}.
To do so, we need a couple more definitions.
A {\em copy} of a graph $H$ in a graph $G$ is an injective map $\varphi\colon V(H)\to V(G)$ such that for all distinct $u,v\in V(H)$, $uv\in E(H)$ if and only if $\varphi(u)\varphi(v)\in E(G)$.
A {\em submeasure} on $G$ is a function $\mu\colon2^{V(G)}\to\mab R^+$ satisfying:
\begin{itemize}
	\item $\mu(\emptyset)=0$ and $\mu(\{v\})=1$ for all $v\in V(G)$;
	
	\item $\mu(X)\le\mu(Y)$ for all $X\subset Y\subset V(G)$ (monotonicity); and
	
	\item $\mu(A\cup B)\le \mu(A)+\mu(B)$ for all $A,B\subset V(G)$ (subadditivity).
\end{itemize}
 
For example, the chromatic number $\chi(G)$ and the number of vertices $\abs G$ are submeasures on $G$.
Submeasures appeared in the work of Liebenau, Pilipczuk, Seymour, and Spirkl~\cite{MR3926277} (in a \dd normalised\ee{} form and under the name \dd measures\ee{}) where they proved that if $G$ has no copy of a given caterpillar and $\mu$ is a submeasure on $G$,
then $G$ admits an anticomplete pair $(A,B)$ with $\mu(A),\mu(B)\ge \mu(G)/2^{O(\omega(G))}$.
Here we will discuss another application of submeasures in $\chi$-boundedness.
In what follows, for a graph $G$ and $X,Y,A,B\subset V(G)$, we write $(X,Y)\subset(A,B)$ if $X\subset A$ and $Y\subset B$.
Let us reconstruct an argument of Erd\H os and Hajnal~\cite{MR1031262} to deduce the following:
\begin{lemma}
	[\erh{}]
\label{lem:gen}
Let $\eps\in(0,\frac12]$, and let $H$ be a graph with $V(H)=\{g_1,\ldots,g_h\}$.
Let $G$ be a graph, and let $A_1,\ldots,A_h\subset V(G)$ be nonempty and disjoint.
Then for every submeasure $\mu$ on $G$, either:
\begin{itemize}
	\item there is a copy $\varphi\colon V(H)\to V(G)$ with $\varphi(g_i)\in A_i$ for all $i\in[h]$; or
	
	\item there are $i,j\in[h]$ with $i<j$, and $(D_i,D_j)\subset (A_i, A_j)$ such that $\mu(D_i)\ge \eps^{h-2}\mu(A_i)$, $\mu(D_j)\ge\eps^{h-2}\mu(A_j)$, and one of the following holds:
	\begin{itemize}
		\item $\mu(D_j\cap N_G(v))<\eps\cdot\mu(D_j)$ for all $v\in D_i$ ($D_i$ is \dd$\eps$-sparse\ee{} to $D_j$); and
		
		\item $\mu(D_j\setminus N_G(v))<\eps\cdot\mu(D_j)$ for all $v\in D_i$ ($D_i$ is \dd$(1-\eps)$-dense\ee{} to $D_j$).
	\end{itemize}
\end{itemize}
\end{lemma}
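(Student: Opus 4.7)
The plan is to argue by induction on $h$ via a standard sequential cleaning: at each step we either place $\varphi(g_1)$ onto some vertex of $A_1$ whose adjacency behaviour toward $A_2,\ldots,A_h$ is ``correct'' and recurse on $H-g_1$ with refined sets; or many vertices of $A_1$ behave badly with respect to a single $A_j$, which directly furnishes a sparse/dense pair.

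The base case $h=2$ is immediate: if no copy exists, then the adjacency prescribed by $H$ between $A_1$ and $A_2$ is violated for every $(u,v)\in A_1\times A_2$, so taking $(D_1,D_2):=(A_1,A_2)$ works. For the inductive step, assume $h\ge 3$ and the claim for $h-1$. For each $j\in\{2,\ldots,h\}$ and $v\in A_1$, set
\[B_j(v):=\begin{cases}N_G(v)\cap A_j&\text{if }g_1g_j\in E(H),\\ A_j\setminus N_G(v)&\text{otherwise},\end{cases}\]
and let $U_j:=\{v\in A_1:\mu(B_j(v))<\eps\cdot\mu(A_j)\}$.

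If some $v\in A_1\setminus\bigcup_{j\ge 2}U_j$ exists, then $A_j':=B_j(v)$ satisfies $\mu(A_j')\ge\eps\mu(A_j)$ for every $j\ge 2$; applying the induction hypothesis to $H-g_1$ and $A_2',\ldots,A_h'$, either we obtain a copy $\psi$ of $H-g_1$ that extends to a copy of $H$ via $\varphi(g_1):=v$ (the adjacencies at $v$ are forced correctly by the definition of $B_j(v)$), or we get a sparse/dense pair $(D_i,D_j)\subset(A_i',A_j')$ with $\mu(D_i)\ge\eps^{h-3}\mu(A_i')\ge\eps^{h-2}\mu(A_i)$, and similarly for $D_j$. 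Otherwise $A_1=\bigcup_{j\ge 2}U_j$, so by subadditivity some $U_j$ has $\mu(U_j)\ge\mu(A_1)/(h-1)\ge\eps^{h-2}\mu(A_1)$, the last inequality being the elementary fact that $(h-1)\eps^{h-2}\le 1$ whenever $\eps\le\tfrac12$ and $h\ge 3$. By construction $(U_j,A_j)$ is then $\eps$-sparse (if $g_1g_j\in E(H)$) or $(1-\eps)$-dense (otherwise), and $\mu(A_j)\ge\eps^{h-2}\mu(A_j)$ trivially, so $(D_i,D_j):=(U_j,A_j)$ fulfils the conclusion.

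The main obstacle is essentially bookkeeping: one must check that the factor $\eps$ lost at each recursion step in the first case composes correctly to $\eps^{h-2}$, and verify the numerical inequality $(h-1)\eps^{h-2}\le 1$ of the second case. The latter is tight precisely when $h=3$ and $\eps=\tfrac12$, which explains the choice of the refinement factor $\eps^{h-2}$ in the statement.
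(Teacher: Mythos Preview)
Your proof is correct and follows essentially the same approach as the paper: induct on $h$, and at each step either find a vertex $v\in A_1$ whose ``correct'' neighbourhoods $B_j(v)$ in $A_2,\ldots,A_h$ all retain an $\eps$-fraction of the submeasure (and recurse on $H-g_1$), or else pigeonhole to find some $U_j\subset A_1$ of submeasure at least $\mu(A_1)/(h-1)\ge\eps^{h-2}\mu(A_1)$ that is $\eps$-sparse or $(1-\eps)$-dense to $A_j$. The paper phrases the dichotomy contrapositively (assume the second outcome fails, deduce each $U_j$ is small, so a good $v$ exists), but the content is identical.
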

\begin{proof}
The lemma is true for $h\le2$.
Let us prove it for $h\ge3$, assuming that is true for $H\setminus g_1$.
To this end, we may assume that the second outcome of the theorem does not hold.
For every $i\in[h]\setminus\{1\}$,
let $B_i$ be the set of vertices $v\in A_1$ with $\mu(N_G(v)\cap A_i)<\eps\cdot\mu(A_i)$ (if $g_1g_i\in E(H)$)
or the set of vertices $v\in A_1$ with $\mu(A_i\setminus N_G(v))<\eps\cdot\mu(A_i)$ (if $g_1g_i\nin E(H)$).
Since the second outcome of the lemma fails,
$\mu(B_i)<\eps^{h-2}\mu(A_1)$ for all $i\in[h]\setminus\{1\}$.
Then by subadditivity and since $(h-1)\eps^{h-2}\le (h-1)2^{2-h}\le1$, 
$$\mu(B_2\cup\cdots\cup B_h)<(h-1)\eps^{h-2}\mu(A_1)\le\mu(A_1).$$
Thus there exists $v\in A_1\setminus(B_2\cup\cdots\cup B_h)$.
For every $i\in[h]\setminus\{1\}$,
let $C_i:=N_G(v)\cap A_i$ (if $g_1g_i\in E(H)$) or $C_i:=A_i\setminus N_G(v)$ (if $g_1g_i\nin E(H))$;
then $\mu(C_i)\ge \eps\cdot \mu(A_i)$.
Since the second outcome of the lemma fails,
there are no $i,j\in[h]\setminus\{1\}$ with $i<j$ and $(D_i,D_j)\subset(C_i,C_j)$ such that
\begin{itemize}
	\item $\mu(D_i)\ge \eps^{h-3}\mu(C_i)$, $\mu(D_j)\ge\eps^{h-3}\mu(C_j)$; and
	
	\item either $\mu(D_j\cap N_G(v))<\eps\cdot \mu(D_j)$ for all $v\in D_i$ or $\mu(D_j\setminus N_G(v))<\eps\cdot \mu(D_j)$ for all $v\in D_i$.
\end{itemize}
Hence by induction,
there is a copy $\varphi$ of $H\setminus g_1$ in $G$ with $\varphi(g_i)\in C_i$ for all $i\in[h]\setminus\{1\}$.
Extending $\varphi$ by defining $\varphi(g_1):=v$ completes the induction step.
This completes the proof of \cref{lem:gen}.
\end{proof}
	In what follows, let $\mu(G):=\mu(V(G))$ for every graph $G$ and every submeasure $\mu$ on $G$.
	From \cref{lem:gen}, we obtain the following \dd near-pure pairs of polynomial submeasure\ee{} result:
\begin{lemma}
	\label{thm:gen}
	For every $\eps\in(0,\frac12]$ and every graph $H$, every $H$-free graph $G$, and every submeasure $\mu$ on $G$,
	there are disjoint $A,B\subset V(G)$ such that:
	\begin{itemize}
		\item $\mu(A),\mu(B)\ge (2\abs H)^{-1}\eps^{\abs H-2}\mu(G)$; and
		
		\item either $\mu(A\cap N_G(v))<\eps\cdot \mu(A)$ for all $v\in B$ or $\mu(A\setminus N_G(v))<\eps\cdot\mu(A)$ for all $v\in B$.
	\end{itemize} 
\end{lemma}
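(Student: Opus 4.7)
The plan is to apply \cref{lem:gen} to a partition of $V(G)$ into $h := \abs H$ parts of comparable submeasure. First I would prove the following partition fact: for any submeasure $\mu$ on a finite set $V$ with $\abs V \ge h$, there is a partition $V = V_1 \sqcup \cdots \sqcup V_h$ into nonempty parts with $\mu(V_i) \ge \mu(V)/(2h)$ for every $i \in [h]$. This uses the singleton property in an essential way. Enumerate $V = \{v_1, \ldots, v_n\}$ and set $s_k := \mu(\{v_1, \ldots, v_k\})$; monotonicity and subadditivity force $s_0 = 0$, $s_n = \mu(V)$, and $s_k - s_{k-1} \in [0,1]$. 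Cutting the sequence at the first indices $i_k$ where $s_{i_k} \ge k\mu(V)/h$ produces consecutive blocks whose $s$-increment is at least $\mu(V)/h - 1$, and subadditivity yields $\mu(V_k) \ge s_{i_k} - s_{i_{k-1}}$; when $\mu(V) \ge 2h$ this gives the stated bound. The complementary case $\mu(V) < 2h$ is immediate, since any partition into $h$ nonempty parts then has each part of submeasure at least $1 > \mu(V)/(2h)$.

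With the partition in hand, I would apply \cref{lem:gen} to $(V_1, \ldots, V_h)$ together with the given $\eps$ and $\mu$. Since $G$ is $H$-free, the first outcome of \cref{lem:gen} (a partition-respecting copy of $H$) cannot hold, so the second outcome must: there exist $i < j$ in $[h]$ and $(D_i, D_j) \subset (V_i, V_j)$ with $\mu(D_i) \ge \eps^{h-2} \mu(V_i)$, $\mu(D_j) \ge \eps^{h-2} \mu(V_j)$, and either $D_i$ is $\eps$-sparse or $(1-\eps)$-dense to $D_j$. Setting $A := D_j$ and $B := D_i$ then immediately converts these into the required conclusion: the measure bound becomes $\mu(A), \mu(B) \ge \eps^{h-2} \mu(G)/(2h) = (2\abs H)^{-1} \eps^{\abs H - 2} \mu(G)$, and the sparse or dense condition on $D_j$ against vertices of $D_i$ becomes exactly the bound on $\mu(A \cap N_G(v))$ or $\mu(A \setminus N_G(v))$ for $v \in B$.

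The main obstacle is the partition fact itself, since submeasures are not additive and one cannot simply peel off blocks of prescribed total mass. The saving observation is that the singleton bound $\mu(\{v\}) = 1$ restricts the granularity of the cumulative sums $s_k$ to steps of at most one, which is what lets us cut cleanly between successive thresholds $k\mu(V)/h$ while losing at most one unit per block. After this, the proof reduces to a single invocation of \cref{lem:gen} followed by reading off the conclusion; the factor $(2\abs H)^{-1}$ is precisely the price paid for partitioning $V(G)$ into $\abs H$ measure-balanced parts. The only loose ends are the small corner cases where $\abs{V(G)} < \abs H$ or $\mu(G)$ is tiny, which can be dispatched directly by picking two arbitrary disjoint vertices and checking the density condition by inspecting their adjacency.
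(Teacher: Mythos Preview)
Your proposal is correct and follows essentially the same route as the paper: both arguments reduce to finding $h=\abs H$ pairwise disjoint subsets of $V(G)$ each of submeasure at least $(2h)^{-1}\mu(G)$, and then invoke \cref{lem:gen} to extract the near-pure pair. The only difference is in how the balanced sets are produced: you partition $V(G)$ by thresholding the cumulative sums $s_k=\mu(\{v_1,\ldots,v_k\})$ (using that $s_k-s_{k-1}\le 1$ from the singleton axiom), while the paper instead runs a short maximality argument, greedily collecting disjoint sets $A_i$ with $(2h)^{-1}\mu(G)<\mu(A_i)\le h^{-1}\mu(G)$ and using minimality of a new $A_{\ell+1}$ to control its submeasure from above. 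Both constructions hinge on the same subadditivity-plus-singleton mechanism and yield the same bound; your version has the mild advantage of giving an actual partition, which is not needed here but does no harm.
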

\begin{proof}
	Let $h:=\abs H$.
	We may assume $h\ge 2$ and $\mu(G)\ge 2h\cdot\eps^{2-h}$, for otherwise the lemma trivially holds.
	Now, let $\ell\ge0$ be maximal such that there are disjoint $A_1,\ldots,A_\ell\subset V(G)$ with $(2h)^{-1}\mu(G)<\mu(A_i)\le h^{-1}\mu(G)$ for all $i\in[\ell]$.
	Let $S:=A_1\cup\cdots\cup A_\ell$.
	If $\ell<h$, then
	$\mu(S)\le \ell\cdot h^{-1}\mu(G)\le(1-h^{-1})\mu(G)$
	and so $\mu(G\setminus S)\ge h^{-1}\mu(G)$.
	Hence there exists minimal $A_{\ell+1}\subset V(G)\setminus S$ with $\mu(A_{\ell+1})\ge (2h)^{-1}\mu(G)$.
	For every $v\in A_{\ell+1}$, the minimality of $A_{\ell+1}$ yields
	$\mu(A_{\ell+1})\le \mu(A_{\ell+1}\setminus\{v\})+\mu(\{v\})
	<(2h)^{-1}\mu(G)+1\le h^{-1}\mu(G)$
	where the last inequality holds since $\mu(G)\ge2h$.
	Thus $A_1,\ldots,A_\ell,A_{\ell+1}$ violate the maximality of $\ell$.
	This shows that $\ell\ge h$; and so $A_1,\ldots,A_h$ are defined.
	
	Now, by \cref{lem:gen}, there are $i,j\in[h]$ with $i<j$, and $(D_i,D_j)\subset (A_i,A_j)$ such that:
	\begin{itemize}
		\item $\mu(D_i)\ge \eps^{h-2}\mu(A_i)\ge (2h)^{-1}\eps^{h-2}\mu(G)$
		and $\mu(D_j)\ge \eps^{h-2}\mu(G)\ge(2h)^{-1}\eps^{h-2}\mu(G)$; and
		
		\item either $\mu(D_j\cap N_G(v))<\eps\cdot \mu(D_j)$ for all $v\in D_i$ or $\mu(D_j\setminus N_G(v))<\eps\cdot \mu(D_j)$ for all $v\in D_i$.
	\end{itemize}
	This proves \cref{thm:gen}.
\end{proof}
	It is not hard to deduce~\cite[Lemma 4.2]{MR4809068} from the above lemma (with similar bounds) and in turn reprove~\cite[Theorem 2.1]{MR3194194}.
	To see this, in the proof of~\cite[Lemma 4.2]{MR4809068} we can let $\eps:=k^{-1}$, $H$ be the disjoint union of some member of $\mac H_1$ and some member of $\mac H_2$, and $\mu$ be the submeasure defined there (we omit the details).
	Now, we use \cref{thm:gen} to prove the following result, which yields \cref{thm:quasichi} with $\mu=\chi$ and $\eps=w^{-1}$:
\begin{theorem}
	\label{thm:quasi}
	Let $H$ be a graph, let $G$ be an $H$-free graph, and let $w:=\omega(G)$.
	Then for every $\eps\in(0,w^{-1}]$ and every submeasure $\mu$ on $G$, either:
	\begin{itemize}
		\item $G$ has a stable set $S$ with $\mu(S)\ge \eps^{2\abs H\log w}\mu(G)$; or
		
		\item there are disjoint $A,B\subset V(G)$ with $\mu(A),\mu(B)\ge \eps^{2\abs H\log w}\mu(G)$ and $\mu(A\cap N_G(v))<\eps\cdot\mu(A)$ for all $v\in B$.
	\end{itemize}
\end{theorem}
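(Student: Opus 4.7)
The plan is to induct on $w$, using \cref{thm:gen} once per layer and arranging the recursion to halve $\omega$ at each layer, so that the total depth is $\lceil \log w\rceil$ and the exponent $2\abs H \log w$ emerges naturally. The base case $w = 1$ is immediate: $G$ is edgeless, so $V(G)$ is a stable set of full $\mu$-measure and the first conclusion holds since $\eps^0 = 1$.

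For $w \ge 2$, I would apply \cref{thm:gen} to $G$ with the given $\eps$ and $\mu$ to obtain disjoint $A_0, B_0 \subset V(G)$ with $\mu(A_0), \mu(B_0) \ge \lambda \mu(G)$, where $\lambda := (2\abs H)^{-1}\eps^{\abs H - 2}$, and either the $\eps$-sparse or the $(1-\eps)$-dense outcome. In the sparse case, $(A_0, B_0)$ directly witnesses the second conclusion; the required inequality $\lambda \ge \eps^{2\abs H\log w}$ reduces to $\eps^{\abs H + 2} \le (2\abs H)^{-1}$, which holds because $\eps \le w^{-1} \le 1/2$ and $2^{\abs H + 1} \ge \abs H$ for $\abs H \ge 1$.

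In the dense case, every $v \in B_0$ satisfies $\mu(A_0 \setminus N_G(v)) < \eps \mu(A_0)$, and I would split on $\omega(G[B_0])$. If $\omega(G[B_0]) \le \lfloor w/2\rfloor$, apply the inductive hypothesis to $G[B_0]$ with the same $\eps$ (still at most $\omega(G[B_0])^{-1}$); any stable set or sparse pair it returns lives in $G$. Otherwise pick a clique $K \subset B_0$ with $\abs K = \lceil w/2\rceil$ and set $A_1 := A_0 \cap \bigcap_{v \in K} N_G(v)$. Subadditivity and the dense property give $\mu(A_1) \ge (1 - \abs K \eps)\mu(A_0) \ge \tfrac14\mu(A_0)$ using $\abs K\eps \le \tfrac12 + \tfrac{1}{2w}$ for $\eps \le w^{-1}$ and $w \ge 2$; and $K$ being complete to $A_1$ forces $\omega(G[A_1]) \le w - \abs K = \lfloor w/2\rfloor$, so the inductive hypothesis again applies. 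In both branches the recursion is on a subgraph with clique number $\le \lfloor w/2\rfloor$ and $\mu$-measure at least $\lambda'\mu(G)$ for $\lambda' := (8\abs H)^{-1}\eps^{\abs H - 2}$; unrolling the $\lceil\log w\rceil$ recursive layers gives the target $\eps^{2\abs H\log w}\mu(G)$ once one verifies $\lambda' \ge \eps^{2\abs H}$, i.e.\ $\eps^{\abs H + 2} \le (8\abs H)^{-1}$, which follows from $\eps \le 1/2$ and $2^{\abs H} \ge 2\abs H$ for $\abs H \ge 1$.

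The hard part will be the bookkeeping around the constant-factor loss in the second dense sub-case when $\abs K\eps$ is close to $1$, which is the regime of small $w$ and $\eps$ near $w^{-1}$. For small (and especially odd) $w$, the factor $1 - \abs K\eps$ is pinched, and one has to verify by hand that the exponent inequalities still hold against these constant losses; the slack in $\eps^{2\abs H\log w}$ at small $w$ should be enough to absorb them, but this is where the verification is most delicate.
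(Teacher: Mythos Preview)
Your argument is correct, and the acknowledged bookkeeping indeed goes through: since $w'\le\lfloor w/2\rfloor\le w/2$ one has $\log w-\log w'\ge1$, so it suffices that $\lambda'\ge\eps^{2\abs H}$, which you verify; and $1-\abs K\eps\ge 1-\lceil w/2\rceil/w\ge 1/3$ for all $w\ge2$, comfortably above your $1/4$.

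Your route, however, is genuinely different from the paper's. You run a clean induction on $w$: apply \cref{thm:gen} once to $G$, and in the dense case recurse into a single subgraph of clique number at most $\lfloor w/2\rfloor$ (either $B_0$ itself, or the common neighbourhood $A_1$ of a half-size clique from $B_0$). The paper instead runs an extremal argument in parallel: it takes $\ell$ (a maximal power of two) disjoint blocks $E_1,\dots,E_\ell$ with $\sum_i\omega(E_i)\le w$ and each $\mu(E_i)\ge\eps^{2\abs H\log\ell}\mu(G)$, applies \cref{thm:gen} inside every block simultaneously, and in the dense case splits each $E_i$ into $D_{2i-1}$ and the common neighbourhood $D_{2i}'$ of a \emph{maximum} clique of $D_{2i-1}$; this automatically gives $\omega(D_{2i-1})+\omega(D_{2i}')\le\omega(E_i)$, so the sum constraint is preserved and $\ell$ doubles, contradicting maximality. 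The paper's device of tracking $\sum_i\omega(E_i)$ and always using the maximum clique eliminates your case split on $\omega(B_0)$ and makes the dense step uniform; your induction, on the other hand, is more transparent and avoids the blockade machinery entirely. Both hinge on the same lemma and yield the same exponent.
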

\begin{proof}
	The main idea is to iterate the \dd near-complete\ee{} outcome in \cref{thm:gen} to get a long sequence of disjoint vertex subsets of $G$ whose sum of clique numbers is small while each of them has sufficiently large submeasure.
	To carry this out, assume that the first outcome does not hold; and so $w\ge2$, for otherwise we could take $S=V(G)$.
	Now, let $\ell\ge1$ be maximal such that $\ell$ is a power of two and there are disjoint $E_1,\ldots,E_{\ell}\subset V(G)$ satisfying:
	\begin{itemize}
		\item $\omega(E_1)+\cdots+\omega(E_{\ell})\le w$; and
		
		\item $\mu(E_i)\ge \eps^{2\abs H\log\ell}\mu(G)$ for all $i\in[\ell]$.
	\end{itemize}
	(Such an $\ell$ exist since these are satisfied for $\ell=1$, taking $E_1=V(G)$.)
	For each $i\in[\ell]$, the second bullet gives $\omega(E_i)\ge1$; and so the first bullet gives $\ell\ge w$. 
	But since the first outcome of the theorem does not hold, $\omega(E_i)\ge2$ for all $i\in[\ell]$; and so the first bullet yields $2\ell\le w$.
	For each $i\in[\ell]$, by \cref{thm:gen} (with $\eps=w^{-1}$ and $G[E_i]$ in place of $G$), there are disjoint $D_{2i-1},D_{2i}\subset E_i$ such that:
	\begin{itemize}
		\item $\mu(D_{2i-1}),\mu(D_{2i})\ge (2\abs H)^{-1}\eps^{\abs H-2}\mu(E_i)\ge \eps^{2\abs H-2}\mu(E_i)\ge \eps^{2\abs H\log (2\ell)-2}\mu(G)$; or
		
		\item either $\mu(D_{2i}\cap N_G(v))<\eps\cdot\mu(D_{2i})$ for all $v\in D_{2i-1}$
		or $\mu(D_{2i}\setminus N_G(v))<\eps\cdot\mu(D_{2i})$ for all $v\in D_{2i-1}$.
	\end{itemize}
	
	If $\mu(D_{2i}\cap N_G(v))<\eps\cdot\mu(D_{2i})$ for all $i\in D_{2i-1}$ then the second outcome of the theorem holds and we are done.
	Thus we may assume $\mu(D_{2i}\setminus N_G(v))<\eps\cdot\mu(D_{2i})$ for all $i\in D_{2i-1}$.
	Now, let $D$ be a largest clique in $G[D_{2i-1}]$, and let $D_{2i}'$ be the set of vertices in $D_{2i}$ complete to $D$; then $\omega(D_{2i-1})+\omega(D_{2i}')=\abs D+\omega(D_{2i}')\le\omega(E_i)$
	and $\mu(D_{2i}')>(1-\abs D\cdot\eps)\mu(D_{2i})\ge(1-\abs D\cdot w^{-1})\mu(D_{2i})\ge0$.
	Hence $\omega(D_{2i}')\ge 1$; and so $\abs D\le\omega(E_i)-1\le w-1$.
	It follows that
	$$\mu(D_{2i}')\ge (1-\abs D\cdot \eps)\mu(D_{2i})
	\ge (1-\abs D\cdot w^{-1})\mu(D_{2i})\ge w^{-1}\mu(D_{2i})\ge \eps^{2\abs H\log(2\ell)}\mu(G).$$
	
	Since $D_{2i-1},D_{2i}'$ are defined for all $i\in[\ell]$ and 
	$$\sum_{i=1}^\ell(\omega(D_{2i-1})+\omega(D_{2i}'))
	\le \sum_{i=1}^\ell\omega(E_i)\le w,$$ 
	the collection $(D_{2i-1},D_{2i}':i\in[\ell])$ then contradicts the maximality of $\ell$.
	This proves \cref{thm:quasi}.
\end{proof}
	In the case $\mu=\chi$, it would be interesting to extend the range of $\eps$ in \cref{thm:quasi} to $(0,c)$ for some $c>0$ depending on $H$ only, since this would imply that every forest is addible and so is poly-$\chi$-bounding. 
	A particular goal of the upcoming section is to show that we can do even better than this (polynomially high-chromatic anticomplete pairs) when $H$ is a disjoint union of brooms.
\section{New confirmed cases of polynomial \gas{}}
\subsection{Adding a path via covering blockades}
This section provides a proof of \cref{thm:anti} in the special case with paths in place of brooms,
which is an adaptation of the Gy\'arf\'as path argument \ref{thm:gs} and gives a new proof of \cref{thm:pathad}.
In this previous section, the proof of \cref{thm:tbroom} was done by obtaining a \dd near-complete\ee{} pair of vertex subsets which together \dd occupy\ee{} all of the chromatic number of the host graph.
In many situations, one can iterate this outcome inside the \dd big\ee{} subset with highest chromatic number each time to obtain a long blockade where each block is near-complete to each previous one.
The following definition formulates this idea.
For $\eps>0$, a blockade $(B_1,\ldots,B_k)$ in a graph $G$ is {\em $\eps$-vivid} if for all $i,j\in[k]$ with $i<j$ and every $v\in B_j$,
$\chi(B_i\setminus N_G(v))<\eps\cdot\chi(B_i)$.
This is an analogue of dense or sparse blockades used frequently in the recent work on the \erh{} conjecture;
and the following lemma shows that $\omega(G)^{-1}$-vivid blockades cannot be too long.
\begin{lemma}
	\label{lem:iterate}
	Let $G$ be a graph with clique number at most $w\ge2$,
	and let $(B_1,\ldots,B_k)$ be a $w^{-1}$-vivid blockade in $G$.
	Then $k\le w$.
\end{lemma}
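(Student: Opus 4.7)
The plan is to construct, by downward induction on the index, vertices $v_k, v_{k-1}, \ldots, v_1$ with $v_i \in B_i$ that form a clique in $G$; since $\omega(G) \le w$, this forces $k \le w$. Concretely, I would assume for contradiction that $k \ge w+1$ and aim to exhibit $w+1$ pairwise adjacent vertices, contradicting the clique-number bound.

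First I would observe that we may assume every $B_i$ is nonempty: if some $B_i = \emptyset$ while a later block $B_j$ is nonempty, then any $v \in B_j$ would force the impossible chain $0 = \chi(B_i \setminus N_G(v)) < w^{-1}\chi(B_i) = 0$, so trailing empties can be dropped and we may take $\chi(B_i) \ge 1$ throughout. For the greedy step, pick any $v_k \in B_k$, and suppose pairwise adjacent $v_{j+1}, \ldots, v_k$ have already been chosen with $v_i \in B_i$. Set $U_j := B_j \cap \bigcap_{i=j+1}^{k} N_G(v_i)$. The cover
\[
B_j \;=\; U_j \;\cup\; \bigcup_{i=j+1}^{k} \bigl(B_j \setminus N_G(v_i)\bigr)
\]
combined with subadditivity of $\chi$ and the $w^{-1}$-vividness bounds $\chi(B_j \setminus N_G(v_i)) < w^{-1}\chi(B_j)$ (one for each $i > j$) yields, upon summing the $(k-j)$ strict inequalities,
\[
\chi(U_j) \;>\; \chi(B_j)\cdot\left(1 - \tfrac{k-j}{w}\right).
\]
Whenever $k - j \le w$, the right-hand side is nonnegative and the inequality is strict, so integrality of $\chi$ gives $\chi(U_j) \ge 1$, and a valid $v_j \in U_j$ exists. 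Iterating from $j = k$ down to $j = k - w$ (which is $\ge 1$ under the assumption $k \ge w+1$) produces a $(w+1)$-clique in $G$, the desired contradiction.

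The only delicate point I foresee is propagating the strict inequality through the subadditive sum at the critical index $j = k - w$, where the plain bound would merely give $\chi(U_j) \ge 0$; this is rescued by the fact that each of the $w$ summands is \emph{strictly} below $w^{-1}\chi(B_j)$, so their total is strictly below $\chi(B_j)$. Beyond this small piece of bookkeeping, the argument is a straightforward Gy\'arf\'as-style iterative clique extraction, and I do not anticipate further obstacles.
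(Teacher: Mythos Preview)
Your proposal is correct and follows essentially the same approach as the paper: both build a transversal clique through the blocks $B_k, B_{k-1}, \ldots$ by repeatedly using the vivid inequality to find a common neighbour in the next block down, yielding a clique of size exceeding $w$ if $k>w$. The paper packages the induction via a maximality argument (take $\ell$ maximal with a clique $v_1,\ldots,v_\ell$, $v_i\in B_{k-i+1}$, then extend), while you carry out the downward greedy construction explicitly, but the substance is identical.
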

\begin{proof}
	Suppose not.
	Let $\ell\ge1$ be maximal such that $G$ has a clique $K=\{v_1,\ldots,v_\ell\}$ with $v_{i}\in B_{k-i+1}$ for all $i\in [\ell]$.
	Then $\ell\le w<k$;
	and so for $j:=k-\ell\ge1$,
	we have that
	$\chi(B_j\setminus \bigcup_{i\in [\ell]}N_G(v_{i}))<\ell\cdot w^{-1}\chi(B_j)
	\le \chi(B_j)$.
	Hence there would be $v_{\ell}\in B_j$ complete to $K$, contrary to the maximality of $\ell$.
	This proves \cref{lem:iterate}.
\end{proof}
We next introduce the central objects in this section.
For $k\ge1$, and for a graph $G$ with clique number $w\ge2$,
a {\em $k$-covering blockade} in $G$ is a blockade $(D_1,\ldots,D_k,E)$ of nonempty disjoint subsets of $V(G)$ such that:
\begin{itemize}
	\item for every $i\in[k]$, every vertex in $D_i$ has a neighbour in $D_{i-1}$ and no neighbour in $D_1\cup\cdots\cup D_{i-2}$;
	
	\item $E$ is anticomplete to $D_1\cup\cdots\cup D_{k-1}$; and
	
	\item for every 
	$X\subset D_k$ and $Y\subset E$ with $\chi(Y)\ge w^{-3}\chi(E)$,
	the set of vertices $u\in X$ with $\chi(Y\setminus N_G(u))< w^{-1}\chi(Y)$ has chromatic number less than $(1-w^{-2})\chi(X)$.
\end{itemize}

The existence of $1$-covering blockades with decent chromatic number is given by the following lemma. 
\begin{lemma}
	\label{lem:base}
	For every non-complete graph $G$ with clique number at most $w\ge2$,
	there is a $1$-covering blockade $(D,E)$ in $G$ with $\chi(D),\chi(E)\ge w^{-6}\chi(G)$.
\end{lemma}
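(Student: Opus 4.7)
The plan is to adapt the Gyárfás-style argument from the proof of \cref{lem:tbroom}, dropping the $t$-broom-freeness hypothesis, and then case-split on how the chromatic number is distributed among the pieces of the resulting cutset decomposition.

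We may assume $G$ is connected (else pass to a maximum-chromatic component). Taking a maximum clique $K$ of $G$ and averaging over $v\in K$ gives some $v\in K$ with $\chi(V(G)\setminus(K\cup N_G(v)))\ge\chi(G)/|K|\ge\chi(G)/w$. A connected component $P$ of this set with $\chi(P)\ge\chi(G)/w$ is anticomplete to $\{v\}$. Among all anticomplete pairs $(P,Q)$ with $G[P],G[Q]$ connected, $\chi(P)\ge\chi(G)/w$, and $\chi(P)\ge\chi(Q)\ge1$, choose one maximizing $\chi(P)+\chi(Q)$, then $|P|+|Q|$. Let $S$ be a minimal cutset separating $P$ and $Q$. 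As in \cref{lem:tbroom}, every vertex of $S$ has a neighbour in each of $P$ and $Q$, the sets $P,Q$ are components of $G\setminus S$, and $\chi(G)\le\chi(S)+\chi(P)$.

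We now branch on $\chi(Q)$. If $\chi(Q)\ge w^{-6}\chi(G)$, set $(D,E):=(P,Q)$: since the pair is anticomplete, $Y\setminus N_G(u)=Y$ for every $u\in D$ and $Y\subset E$, so $\chi(Y\setminus N_G(u))=\chi(Y)\ge w^{-1}\chi(Y)$; the ``bad'' set in the third bullet is empty for every substantial $Y$, and the condition is satisfied vacuously. Otherwise $\chi(Q)<w^{-6}\chi(G)$, and from $\chi(G)\le\chi(S)+\chi(P)$ we split further: either $\chi(S)\ge w^{-6}\chi(G)$ (with candidate $(D,E):=(S,P)$), or $\chi(P)\ge(1-w^{-6})\chi(G)$ (and we induct on $|V(G)|$ by passing to $G[P]$, handling the bounded base case $\chi(G)\le w^2$ directly).

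The main obstacle is the non-anticomplete subcase $\chi(S)\ge w^{-6}\chi(G)$: we need to verify the third bullet for $(D,E):=(S,P)$, which under the strict reading (applying the condition with $X$ equal to the bad set itself) forbids any vertex of $D$ from being near-complete to any $Y\subset E$ with $\chi(Y)\ge w^{-3}\chi(E)$. To enforce this, I would clean the candidate $D$ by an iterative procedure \`a la \cref{lem:iterate}: any chain of near-complete vertices in the shrinking residues $Y\cap N_G(u)$ extends into a clique across $S\cup P$, so by $\omega(G)\le w$ the chain has length at most $w$, contributing at most a $w^{-2}$-fraction of the chromatic mass via careful bookkeeping. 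The maximality of $(P,Q)$ then rules out the remaining witnesses, since any $u\in S$ that near-dominates a chromatic slice of $P$, together with its neighbours in $Q$, would allow an enlargement of $(P,Q)$ and contradict the extremal choice. Making the constants $w^{-6}$ and $w^{-2}$ come out correctly—so that both the chromatic threshold for $D$ and the "bad fraction" of the third bullet close up—is the bookkeeping crux.
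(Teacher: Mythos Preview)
Your proposal has two genuine gaps.

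\textbf{The induction branch does not close.} In the case $\chi(S)<w^{-6}\chi(G)$ you pass to $G[P]$ with $\chi(P)\ge(1-w^{-6})\chi(G)$ and invoke the inductive hypothesis. But that only yields a blockade $(D',E')$ with $\chi(D'),\chi(E')\ge w^{-6}\chi(P)\ge w^{-6}(1-w^{-6})\chi(G)$, strictly below the target $w^{-6}\chi(G)$. Since nothing bounds the recursion depth (each step may remove only $S\cup Q$, which can be tiny compared to $|V(G)|$), this multiplicative loss compounds without limit. No rescaling of the constant $w^{-6}$ repairs this.

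\textbf{The cutset branch $(D,E)=(S,P)$ is unworkable.} You correctly note that the strict reading of the third bullet forbids \emph{any} vertex of $D$ from being near-complete to a substantial $Y\subset E$. But nothing in your setup prevents $S$ from being complete to $P$: a minimal cutset separating two components can perfectly well be complete to one side, and this is consistent with the extremal choice of $(P,Q)$. In that situation every $u\in S$ is ``bad'' for every $Y$, and no cleaning can help. Your appeal to \cref{lem:iterate} does not apply: that lemma extracts a clique from a $w^{-1}$-vivid \emph{blockade} $(B_1,\ldots,B_k)$, but your bad vertices all sit in the single set $S$ and are near-complete to subsets of $P$; there is no chain of blocks each near-complete to the previous from which a transversal clique could be built. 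Likewise, a vertex $u\in S$ near-dominating a slice of $P$ gives no obvious way to enlarge the anticomplete pair $(P,Q)$, so the maximality argument you sketch has no force.

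The paper's proof avoids both problems by a completely different mechanism. It takes a \emph{maximal} $w^{-1}$-vivid blockade $(B_0,\ldots,B_\ell)$ in $G$; \cref{lem:iterate} bounds $\ell<w$, so $\chi(B_\ell)$ is a constant fraction of $\chi(G)$. It then carves $B_\ell$ into a large piece $D$ and a small (but still $\ge w^{-6}\chi(G)$) piece $E$. If the third bullet failed---some $X\subset D$ with $\chi(X)\ge(1-w^{-2})\chi(D)$ were $w^{-1}$-vivid to some $Y\subset E$ with $\chi(Y)\ge w^{-3}\chi(E)$---then $(B_0,\ldots,B_{\ell-1},Y,X)$ would be a strictly longer vivid blockade meeting the same thresholds, contradicting maximality. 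This is a single extremal argument with no induction on $|V(G)|$ and no need to control cutset structure; the maximality of the vivid blockade is exactly what certifies the third bullet.
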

\begin{proof}
	We may assume $\chi(G)\ge w^6$.
	Let $\ell\ge0$ be maximal such that there is a $w^{-1}$-vivid blockade $(B_0,B_1,\ldots,B_\ell)$ of nonempty disjoint subsets of $V(G)$ with
	$\chi(B_{i-1})\ge w^{-8}\chi(G)$ for all $i\in[\ell]$ and $\chi(B_\ell)\ge (1-w^{-1})^{2\ell}\chi(G)$.
	\cref{lem:iterate} implies $\ell<w$;
	and so
	$\chi(B_\ell)\ge (1-w^{-1})^{\ell}\chi(G)\ge 2^{-2}\chi(G)>w^4$.
	Let $E\subset B_\ell$ be such that $w^{-4}\chi(B_{\ell})\le\chi(E)\le 2w^{-4}\chi(B_{\ell})$,
	and let $D:=B_\ell\setminus E$;
	then
	$$\chi(D)\ge (1-w^{-2})\chi(B_\ell)
	\ge(1-w^{-1})^{\ell+1}\chi(G)\ge w^{-6}\chi(G).$$
	Let $X\subset D$ and $Y\subset E$ be such that  $\chi(Y)\ge w^{-3}\chi(E)\ge w^{-8}\chi(G)$ and
	$$\chi(X)\ge (1-w^{-2})\chi(D)
	\ge (1-w^{-2})^2\chi(B_\ell)
	\ge (1-w^{-1})\chi(B_\ell)\ge(1-w^{-1})^{\ell+1}\chi(G).$$
	If $X$ is $w^{-1}$-vivid to $Y$,
	then $(B_0,B_1,\ldots,B_{\ell-1},Y,X)$ would violate the maximality of~$k$.
	Thus there exists $v\in X$ with $\chi(Y\setminus N_G(v))\ge w^{-1}\chi(Y)$.
	This completes the proof of \cref{lem:base}.
\end{proof}
Since every $k$-covering blockade in $G$ yields an induced $P_{k+1}$ in $G$,
the following lemma immediately implies the case $t=1$ of \cref{thm:anti}, gives a new proof of \cref{thm:pathad}, and will be used to prove the general case in Subsection~\ref{sec:broom}.
We remark that the following argument is somewhat similar to the one in~\cite{density5}.
\begin{lemma}
	\label{lem:covering}
	For every $k\ge1$, every non-complete graph $G$ with clique number $w\ge1$ contains one of the following:
	\begin{itemize}
		\item an anticomplete pair $(A,B)$ with $\chi(A),\chi(B)\ge w^{-8k}\chi(G)$; and
		
		\item a $k$-covering blockade $(D_1,\ldots,D_k,E)$ with $\chi(D_k),\chi(E)\ge w^{-6k}\chi(G)$.
	\end{itemize}
\end{lemma}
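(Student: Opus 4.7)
The plan is to proceed by induction on $k$, with the base case $k=1$ supplied by \cref{lem:base}. For the inductive step, I would apply the inductive hypothesis to $G$: if the anticomplete-pair outcome appears, it automatically meets the bound at level $k+1$ since $w^{-8k}\chi(G)\ge w^{-8(k+1)}\chi(G)$; otherwise one obtains a $k$-covering blockade $(D_1,\ldots,D_k,E)$ with $\chi(D_k),\chi(E)\ge w^{-6k}\chi(G)$, from which the task is to extend to a $(k+1)$-covering blockade, or else to extract an anticomplete pair.

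The extension would begin by applying \cref{lem:base} to $G[E]$ (it is harmless to assume $G[E]$ is non-complete, else $\chi(G)$ is so small that an anticomplete pair of singletons trivially meets outcome~1), producing a $1$-covering blockade $(D^{\star},E^{\star})$ inside $E$ with $\chi(D^{\star}),\chi(E^{\star})\ge w^{-6}\chi(E)\ge w^{-6(k+1)}\chi(G)$ and the density property between them. I would then split $E=F\sqcup F'$ with $F:=N_G(D_k)\cap E$. If $\chi(F')\ge w^{-8(k+1)}\chi(G)$, then $(D_k,F')$ is an anticomplete pair yielding outcome~1 (note $\chi(D_k)\ge w^{-6k}\chi(G)\ge w^{-8(k+1)}\chi(G)$), and we are done. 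Otherwise $F'$ is chromatically negligible; the candidate $D_{k+1}:=D^{\star}\cap F$ is fine for the neighbour-in-$D_k$ requirement, but the natural $E^+\subset E^{\star}\cap F'$ is now too small to play the role of the new $E$.

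To resolve this, the idea is to simultaneously shrink $D_k$: fix $E^+\subset E^{\star}\cap F$ of chromatic at least $w^{-6(k+1)}\chi(G)$ (feasible because nearly all of $\chi(E^{\star})$ sits on $F$), set $D_k'':=\{u\in D_k:N_G(u)\cap E^+=\emptyset\}$, and redefine $D_{k+1}:=D^{\star}\cap F\cap N_G(D_k'')$. The resulting tuple $(D_1,\ldots,D_{k-1},D_k'',D_{k+1},E^+)$ would then be the sought $(k+1)$-covering blockade: the indexing-blockade conditions are inherited, $E^+$ is anticomplete to $D_1\cup\ldots\cup D_{k-1}\cup D_k''$ by construction, and the density property between $D_{k+1}$ and $E^+$ transfers from $(D^{\star},E^{\star})$ since $D_{k+1}\subset D^{\star}$ and $E^+\subset E^{\star}$.

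The hard part will be certifying, in this last case, that $\chi(D_{k+1})$ and $\chi(E^+)$ both remain at least $w^{-6(k+1)}\chi(G)$ after the dual restrictions. The crux is controlling $\chi(D_k'')$, because the density property of the original $(D_k,E)$ only bounds the vertices of $D_k$ that are chromatically \emph{dense} to a large $Y\subset E$, not those merely having some neighbour in $Y$; bridging this gap will likely demand an adaptive or iterative choice of $E^+$ (peeling off adjacent vertices in $D_k$ in successive rounds) or a sharpening of the parameters in \cref{lem:base}. One also has to reconcile the density threshold $w^{-3}\chi(E^{\star})$ with $w^{-3}\chi(E^+)$ for the new pair, which limits how far $\chi(E^+)$ is permitted to drop below $\chi(E^{\star})$, and is where the chromatic bookkeeping between the $w^{-6k}$, $w^{-8k}$, $w^{-6}$ and $w^{-3}$ exponents is most delicate.
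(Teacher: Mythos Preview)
Your inductive framework and base case are correct, and you have correctly identified the two places where the argument is delicate. But both of those places are genuine gaps, not just bookkeeping, and the paper's proof handles them with ideas that your proposal does not contain.

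\medskip
\textbf{The density property does not transfer from $(D^\star,E^\star)$ to $(D_{k+1},E^+)$.} The third bullet in the definition of a covering blockade is calibrated to the threshold $w^{-3}\chi(E^\star)$; once you pass to a smaller $E^+\subset E^\star$ the hypothesis $\chi(Y)\ge w^{-3}\chi(E^+)$ no longer implies $\chi(Y)\ge w^{-3}\chi(E^\star)$, so nothing carries over. The paper does not use \cref{lem:base} as a black box in the inductive step. Instead it re-runs the argument from the proof of \cref{lem:base} inside $E$: it grows a maximal $w^{-1}$-vivid blockade $(B_0,\ldots,B_\ell)$ in $E$, uses \cref{lem:iterate} to bound $\ell<w$, and then carves both $D_{k+1}$ and the new $E'$ out of the final block $B_\ell$. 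The density property of the new pair $(D_{k+1},E')$ is then obtained not by inheritance but from the \emph{maximality of $\ell$}: if some large $X\subset D_{k+1}$ were $w^{-1}$-vivid to some large $Y\subset E'$, then $(B_0,\ldots,B_{\ell-1},Y,X)$ would be a longer vivid blockade. This is exactly the flexibility you lose by calling \cref{lem:base} once and treating its output as fixed.

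\medskip
\textbf{Your construction gives no lower bound on $\chi(D_{k+1})$.} You fix $E^+$ first and then set $D_k'':=\{u\in D_k:N_G(u)\cap E^+=\emptyset\}$, but nothing prevents $D_k''$ from being empty, in which case $D_{k+1}=D^\star\cap N_G(D_k'')$ is empty too. As you note, the covering property of $(D_k,E)$ only controls vertices that are \emph{dense} to $E^+$, not those with a single neighbour there. The paper reverses the order of construction: working inside $B_\ell$, it takes $A\subset D_k$ \emph{maximal} subject to the set $B$ of vertices in $B_\ell$ with no neighbour in $A$ satisfying $\chi(B)\ge w^{-1}\chi(B_\ell)$. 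The failure of the anticomplete-pair outcome forces $\chi(A)\le w^{-8(k+1)}\chi(G)$, so $\chi(D_k\setminus A)\ge(1-w^{-2})\chi(D_k)$, and now the covering property of $(D_k,E)$ applied with $X=D_k\setminus A$ and $Y=B$ supplies a single vertex $u\in D_k\setminus A$ with $\chi(B\setminus N_G(u))\ge w^{-1}\chi(B)$. The new $k$-th block is the tiny set $D_k':=A\cup\{u\}$; one sets $D_{k+1}:=\{v\in B_\ell:v\text{ has a neighbour in }D_k'\}$ and $E':=B_\ell\setminus D_{k+1}=B\setminus N_G(u)$. The point is that \emph{maximality of $A$} now does the work: adding $u$ to $A$ violates the defining inequality, so $\chi(E')<w^{-1}\chi(B_\ell)$ and hence $\chi(D_{k+1})$ is almost all of $\chi(B_\ell)$; meanwhile the choice of $u$ gives $\chi(E')\ge w^{-1}\chi(B)\ge w^{-2}\chi(B_\ell)$. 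Your ``adaptive or iterative choice of $E^+$'' is in the right spirit, but the maximality that makes the argument close is over subsets of $D_k$, not over refinements of $E^+$.
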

\begin{proof}
	We proceed by induction on $k\ge1$.
	For $k=1$ this is true by \cref{lem:base}.
	Now, assume that the lemma holds for $k$;
	let us show it for $k+1$.
	Assume that the first outcome of the lemma does not hold;
	then by induction,
	there is a $k$-covering blockade $(D_1,\ldots,D_k,E)$ in $G$ with $\chi(D_k),\chi(E)\ge w^{-6k}\chi(G)$.
	Let $\ell\ge0$ be maximal such that there is a $w^{-1}$-vivid blockade $(B_0,B_1,\ldots,B_\ell)$ of disjoint subsets of $E$ such that $\chi(B_{i-1})\ge w^{-6}\chi(E)$
	for all $i\in[\ell]$
	and $\chi(B_\ell)\ge (1-w^{-1})^\ell\chi(E)$.
	Then $\ell<w$ by \cref{lem:iterate};
	and so $\chi(B_\ell)\ge (1-\ell\cdot  w^{-1})\chi(E)\ge w^{-1}\chi(E)$.
	Since $(D_1,\ldots,D_k,E)$ is a $k$-covering blockade,
	there exists $v\in D_k$ with $\chi(B_\ell\setminus N_G(v))\ge w^{-1}\chi(B_\ell)$.
	Thus there exists $A\subset D_k$ maximal such that the set $B$ of vertices in $B_\ell$ with no neighbour in $A$ satisfies $\chi(B)\ge w^{-1}\chi(B_{\ell})\ge w^{-6k-1}\chi(G)$.
	Since the first outcome of the lemma does not hold,
	$\chi(A)\le w^{-8(k+1)}\chi(G)\le w^{-8}\chi(D_k)$.
	Hence $\chi(D_k\setminus A)\ge(1-w^{-2})\chi(D_k)$;
	and so there exists $u\in D_k\setminus A$ with $\chi(B\setminus N_G(u))\ge w^{-1}\chi(B)
	$
	by the definition of $k$-covering blockades.
	Let $D_k':=A\cup\{u\}$,
	let $D_{k+1}$ be the set of vertices in $B_\ell$ with a neighbour in $D_k'$,
	and let $E':=B_\ell\setminus D_{k+1}$.
	
	To finish the induction step, we shall prove that 
	$(D_1,\ldots,D_{k-1},D_k',D_{k+1},E')$ is a $(k+1)$-covering blockade with $\chi(D_{k+1}),\chi(E')\ge w^{-6k-6}\chi(G)$.
	To see this, note that $D_{k+1},E'\subset E$ are anticomplete to $D_1\cup\cdots\cup D_{k-1}$.
	Also, by definition, every vertex in $D_{k+1}$ has a neighbour in $D_k'$ and $E'$ is anticomplete to $D_k'$.
	The maximality of $A$ yields $\chi(E')<w^{-2}\chi(B_\ell)$;
	and so $$\chi(D_{k+1})>(1-w^{-2})\chi(B_{\ell})\ge w^{-3}\chi(E)\ge w^{-6k-3}\chi(G).$$
	The choice of $u$ implies
	$$\begin{aligned}
		\chi(E')=\chi(B_\ell\setminus D_{k+1})&=\chi(B\setminus N_G(u))\\
		&\ge w^{-1}\chi(B)\ge w^{-2}\chi(B_\ell)\ge w^{-3}\chi(E)\ge w^{-6k-3}\chi(G).
	\end{aligned}$$
	Now, let $X\subset D_{k+1}$ and $Y\subset E'$ satisfy
	\begin{align*}
		\chi(Y)&\ge w^{-3}\chi(E')\ge w^{-6}\chi(E)\ge w^{-6k-6}\chi(G),\\
		\chi(X)&\ge (1-w^{-2})\chi(D_{k+1})
		\ge (1-w^{-2})^2\chi(B_\ell)
		\ge (1-w^{-1})\chi(B_\ell)
		\ge (1-w^{-1})^{\ell+1}\chi(E).
	\end{align*}
	If $X$ is $w^{-1}$-vivid to $Y$,
	then $(B_0,B_1,\ldots,B_{\ell-1},Y,X)$ would contradict the maximality of $\ell$.
	Therefore, there exists $z\in X$ with $\chi(Y\setminus N_G(z))\ge w^{-1}\chi(Y)$.
	This completes the induction step and the proof of \cref{lem:covering}.
\end{proof}
\subsection{Controlled induced subgraphs}
In what follows, for $q\ge w\ge2$,
a graph $G$ with clique number at most $w$, is {\em $q$-controlled} if $G$ is connected and $\chi(N_G(v))<(1-q^{-2})\chi(G)$ for all $v\in V(G)$;
we will drop the prefix \dd{}$w$-\ee{} from \dd$w$-controlled\ee{} for brevity when there is no danger of ambiguity.
The purpose of this definition is to replace a common approach in $\chi$-boundedness that uses induction on the clique number to
deduce that the neighbourhood of every vertex has not too large chromatic number.
The following lemma shows that there is always a controlled induced subgraph in $G$ with chromatic number almost equal~to~$\chi(G)$, which will be important in the proof of \cref{thm:anti} in Subsection~\ref{sec:broom} and the proof of \cref{thm:mainll} in \cref{sec:llp5}.
\begin{lemma}
	\label{lem:control}
	For every $q\ge w\ge 2$, every graph $G$ with clique number at most $w$ has a $q$-controlled induced subgraph with chromatic number more than $(1-wq^{-2})\chi(G)$.
\end{lemma}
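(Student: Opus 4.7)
The plan is to proceed by strong induction on $\abs{G}$, with the base case being $\omega(G) \le 1$: in that case $G$ is edgeless and any single vertex is a $q$-controlled induced subgraph of $G$, with chromatic number $1 \ge (1-wq^{-2})\chi(G)$ since $\chi(G)\le 1$. (This covers, in particular, the $w=1$ boundary that we will encounter when the recursion bottoms out.)

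For the inductive step, fix $G$ with $2\le\omega(G)\le w$ and split into three cases. If $G$ is itself $q$-controlled, then $H:=G$ works. If $G$ is disconnected, pick a component $G_1$ attaining $\chi(G_1)=\chi(G)$ and apply the induction hypothesis to $G_1$ with the same $w$ to get the required $H\subset G_1$. Otherwise $G$ is connected but there exists $v\in V(G)$ with $\chi(N_G(v))\ge(1-q^{-2})\chi(G)$; set $G':=G[N_G(v)]$. Since $v$ is complete to $V(G')$ in $G$, a $w$-clique in $G'$ together with $v$ would yield a $(w+1)$-clique of $G$, so $\omega(G')\le w-1$; and by construction $\chi(G')\ge(1-q^{-2})\chi(G)$. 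Apply induction to $G'$ with $w-1$ in place of $w$ (valid since $\abs{G'}<\abs{G}$ and $q\ge w\ge w-1$) to obtain a $q$-controlled induced subgraph $H\subset G'\subset G$ with
\[
\chi(H)>\bigl(1-(w-1)q^{-2}\bigr)\chi(G')\ge\bigl(1-(w-1)q^{-2}\bigr)(1-q^{-2})\chi(G)\ge(1-wq^{-2})\chi(G),
\]
where the final inequality is the identity $\bigl(1-(w-1)q^{-2}\bigr)(1-q^{-2})=1-wq^{-2}+(w-1)q^{-4}$.

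The one point that requires care — and the reason the proof is more than a one-line iteration — is to make the chromatic losses telescope \emph{additively} to $wq^{-2}$, rather than compounding \emph{multiplicatively} to $1-(1-q^{-2})^w$, which would be far too lossy for the downstream applications in \cref{sec:broom,sec:llp5}. This is achieved precisely by recursing with $w-1$ rather than $w$ in the neighbourhood-contraction case, so that each such contraction spends exactly one $q^{-2}$ additive unit against the total budget of $w$ units; component selection, being chromatic-number-preserving, costs nothing. With the base case absorbing the $w=1$ terminus, the argument is complete.
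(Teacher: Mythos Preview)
Your proof is correct and is the recursive unrolling of the paper's maximality argument: the paper takes the largest $k$ for which some $k$-clique $S$ admits an induced subgraph $F$ complete to $S$ with $\chi(F)\ge(1-q^{-2})^{k}\chi(G)$, notes $k<w$, and passes to a top component of $F$; your Case~3 is exactly the step ``add one vertex to $S$ and recurse in its neighbourhood'', with Case~2 doing the component selection.

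Two small corrections. First, the $\ge$ in your base case should be strict (and the strict inequality does hold, since $wq^{-2}>0$). Second, your closing paragraph has the comparison reversed: by Bernoulli's inequality $(1-q^{-2})^{w}\ge 1-wq^{-2}$, so the ``multiplicative compounding'' loses \emph{less}, not more, than the additive budget --- indeed the paper's proof first obtains the multiplicative bound $(1-q^{-2})^{k}$ and then applies Bernoulli to extract the additive form. The genuine reason the argument is not a one-line iteration is termination (the clique number must drop at each neighbourhood step, bounding the depth by $w$), which you do have right.
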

\begin{proof}
	Let $k\ge0$ be maximal such that there exist a clique $S$ in $G$ with $\abs S=k$ and an induced subgraph $F$ of $G\setminus S$ with $V(F)$ complete to $S$ in $G$ and $\chi(F)\ge(1-q^{-2})^k\chi(G)$;
	such a $k$ exists since these conditions are satisfied for $k=0$, taking $S$ empty and $F=G$.
	Then $k<w$, and so
	$$\chi(F)\ge (1-q^{-2})^k\chi(G)\ge (1-kq^{-2})\chi(G)> (1-wq^{-2})\chi(G).$$
	If there exists $v\in V(F)$ with $\chi(N_F(v))\ge(1-q^{-2})\chi(F)\ge(1-q^{-2})^{k+1}\chi(G)$,
	then taking $S':=S\cup\{v\}$ and $F':=F[N_F(v)]$ would contradict the maximality of $k$.
	Hence every component of $F$ with chromatic number $\chi(F)$ is a $q$-controlled induced subgraph of $G$.
	This completes the proof of \cref{lem:control}.
\end{proof}
\subsection{Adding a broom}
\label{sec:broom}
This section contains the proof of \cref{thm:anti}.
Let us start by reproducing the argument of Scott, Seymour, and Spirkl~\cite{MR4472774} that proved their star addition result \ref{thm:starad}.
\begin{lemma}
	[Scott--Seymour--Spirkl]
	\label{lem:star}
	Let $t\ge1$ and $w\ge2$ be integers, let $F$ be a graph with $\omega(F)\le w$.
	Assume that there exists $A\subset V(F)$ with $\abs A\ge w^{t+2}$,
	and let $B\subset V(F)\setminus A$.
	Then for every $q\ge1$, $F$ contains either:
	\begin{itemize}
		
		\item a pair $(X,Y)\subset(A,B)$ with $\omega(X)+\omega(Y)\le \omega(F)$,
		$\abs{A\setminus X}< w^{t+2}$,
		and $\chi(B\setminus Y)< q$; or
		
		\item an anticomplete pair $(P,Q)\subset (A,B)$ where
		$P$ is a stable set of size $t$ and $\chi(Q)\ge w^{-t(t+2)}q$.
	\end{itemize}
\end{lemma}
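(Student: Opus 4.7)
The proof proceeds by a greedy iteration that either successfully builds the stable set demanded by the second outcome, or halts prematurely and yields the first outcome. We aim to construct a stable set $P_i = \{v_1, \ldots, v_i\} \subset A$ and a nested chain $B = Q_0 \supset Q_1 \supset \cdots \supset Q_i$ in $B$, maintaining the invariants that $Q_i$ is anticomplete to $P_i$ in $F$ and $\chi(Q_i) \ge w^{-i(t+2)} q$. If we reach $i = t$, then $(P_t, Q_t)$ is the desired anticomplete pair witnessing the second outcome.

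At step $i+1 \le t$, we consider the ``live'' set $R_i := A \setminus N_F[P_i]$ of vertices of $A$ stable to $P_i$, and seek $v_{i+1} \in R_i$ with $\chi(Q_i \setminus N_F(v_{i+1})) \ge w^{-(t+2)} \chi(Q_i)$. If such $v_{i+1}$ exists, we continue with $Q_{i+1} := Q_i \setminus N_F(v_{i+1})$. The main content is the \emph{failure case}: suppose the process halts at some $i < t$, so that for every $v \in R_i$ we have $\chi(Q_i \cap N_F(v)) > (1 - w^{-(t+2)})\chi(Q_i)$; we must then extract the first outcome.

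In the failure case, I would take a maximum clique $K \subset R_i$ of size $k \le \omega(F[R_i])$ and set $Y := Q_i \cap \bigcap_{v \in K} N_F(v)$. Subadditivity of $\chi$ and the near-completeness property give $\chi(Q_i \setminus Y) \le k \cdot w^{-(t+2)} \chi(Q_i) \le w^{-(t+1)}\chi(Q_i)$, hence $\chi(Y)$ is within a $(1 - w^{-(t+1)})$ factor of $\chi(Q_i)$. Unwinding the chain, one deduces $\chi(B \setminus Y) < q$. For $X$, take the set of vertices in $A$ complete to $Y$; since $K \subset X$ and $X$ is complete to $Y$, any clique in $X$ together with any clique in $Y$ forms a clique in $F$, giving $\omega(X) + \omega(Y) \le \omega(F)$.

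\textbf{Main obstacle.} The delicate step is the size bound $|A \setminus X| < w^{t+2}$. A vertex of $A \setminus X$ either lies in $N_F[P_i]$ or has a non-neighbour in $Y$. Controlling the first contribution requires that each newly added $v_j$ ``costs'' only a bounded portion of $A$, and controlling the second requires leveraging the hypothesis $|A| \ge w^{t+2}$ together with a Ramsey-type counting argument on the clique-number drops along the way (each $v_j$ contributes a unit drop in the clique number available to its neighbourhood inside $A$). The cleanest route is probably to install this counting into the invariants of the iteration itself: maintain a budget of $w^{t+2-i}$ on the ``discarded'' portion of $A$ after step $i$, and argue that at each stuck step the live set $R_i$ still has $\ge w^{t+2-i}$ vertices so that the Ramsey-style pigeonholing produces a large enough $X$ on the last step. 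Balancing these three quantities simultaneously -- the chromatic contraction of $Q_i$, the size contraction of $R_i$, and the clique-number accounting for $\omega(X)+\omega(Y)$ -- is the principal technical burden.
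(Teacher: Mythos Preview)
Your iterative approach has a genuine gap in the failure case. You set $Y:=Q_i\cap\bigcap_{v\in K}N_F(v)\subset Q_i$ and correctly bound $\chi(Q_i\setminus Y)\le w^{-(t+1)}\chi(Q_i)$; but the claimed ``unwinding'' to $\chi(B\setminus Y)<q$ cannot work, because $B\setminus Y\supset B\setminus Q_i$ and your construction gives no control whatsoever on $\chi(B\setminus Q_i)$. At step $j$ you chose $v_j$ to make $\chi(Q_{j-1}\setminus N_F(v_j))$ \emph{large}; this says nothing about $\chi(Q_{j-1}\cap N_F(v_j))=\chi(Q_{j-1}\setminus Q_j)$ being small --- it could equal $\chi(Q_{j-1})$. (For chromatic number, a lower bound on $\chi(Q_j)$ does not yield an upper bound on $\chi(Q_{j-1}\setminus Q_j)$.) So the chromatic number discarded along $B=Q_0\supset\cdots\supset Q_i$ is unbounded in terms of $q$. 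Your $|A\setminus X|<w^{t+2}$ bound is likewise not established: with $X$ defined as the vertices of $A$ complete to $Y$, vertices of $R_i$ are only \emph{near}-complete to $Q_i$ (hence possibly not in $X$), and vertices of $A\cap N_F[P_i]$ are completely uncontrolled; the ``budget'' sketch does not address this.

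The paper does not iterate at all. Assuming the second outcome fails, it defines $X$ and $Y$ in one shot: greedily strip $n:=w^{t+1}-1$ maximum cliques $A_1,\ldots,A_n$ from $A$ and let $X$ be what remains, so $|A\setminus X|\le nw<w^{t+2}$ is immediate and $\omega(X)\le|A_n|=:p$. Let $Y$ be the vertices of $B$ with fewer than $w^t$ nonneighbours in $A\setminus X$; a clique in $Y$ of size exceeding $\omega(F)-p$ would be complete to some $A_j$, giving $\omega(X)+\omega(Y)\le\omega(F)$. Finally, each vertex of $B\setminus Y$ has at least $w^t\ge R(t,w)$ nonneighbours in $A\setminus X$, hence is anticomplete to some stable $t$-set $S\subset A\setminus X$; the failed second outcome bounds each $\chi(B_S)<w^{-t(t+2)}q$, and summing over the at most $|A\setminus X|^t<w^{t(t+2)}$ such $S$ gives $\chi(B\setminus Y)<q$. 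The key difference is that the paper uses the negation of the second outcome \emph{for every} stable $t$-set simultaneously, rather than attempting to build a single one.
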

\begin{proof}
	Assume that the second outcome does not hold.
	Let $n:=w^{t+1}-1$.
	Since $\abs A\ge w^{t+2}>nw$, there are $n$ nonempty cliques $A_1,\ldots,A_n\subset A$ such that for every $j\in[n]$,
	$A_j$ is a maximum clique in $F[A\setminus(A_1\cup\cdots\cup A_{j-1})]$.
	Let $p:=\abs{A_n}\ge1$ and $X:=A\setminus(A_1\cup\cdots\cup A_n)$; then $\abs{A\setminus X}\le nw< w^{t+2}$ and 
	$\omega(X)\le p\le \omega(F)$.
	Let $Y$ be the set of vertices in $B$ with fewer than $w^t$ nonneighbours in $A\setminus X$.
	\begin{claim}
		\label{claim:brooms1}
		$\omega(Y)\le \omega(F)-p$;
		and so $\omega(X)+\omega(Y)\le\omega(F)$.
	\end{claim}
	\begin{subproof}
		Suppose not;
		then there is a clique $K\subset Y$ with $\abs K>\omega(F)-p$.
		The number of vertices $A\setminus X$ with a nonneighbour in $K$ is at most $\abs K(w^t-1)< w^{t+1}-1=n$;
		and so there exists $j\in[n]$ such that $A_j$ is complete to $K$.
		By the definition of $A_j$, we have $\abs{A_j}\ge\abs{A_n}=p$;
		and thus $\omega(F)\ge\abs{A_j}+\abs K>p+\omega(F)-p=\omega(F)$, a contradiction.
		This proves \cref{claim:brooms1}.
	\end{subproof}
	\begin{claim}
		\label{claim:brooms2}
		$\chi(B\setminus Y)<q$.
	\end{claim}
	\begin{subproof}
		Let $\mac T$ be the family of all stable sets $S\subset A\setminus X$ with $\abs S=t$;
		and for each $S\in\mac T$, let $B_S$ be the set of vertices in $B\setminus Y$ with no neighbour in $S$.
		Each vertex in $B\setminus Y$ has at least $w^t$ nonneighbours in $A\setminus X$
		and so is anticomplete to some $S\in\mac T$ since $R(t,w)\le w^t$~\cite{MR1556929}.
		Hence $B\setminus Y= \bigcup_{S\in\mac T}B_S$.
		For each $S\in\mac T$,
		if $\chi(B_S)\ge w^{-t(t+2)}q$ then $S$ and $B_S$ satisfy the second outcome of the lemma, a contradiction;
		and so $\chi(B_S)< w^{-t(t+2)}q$.
		Hence
		\[\chi(B\setminus Y)< \abs{A\setminus X}^t\cdot w^{-t(t+2)}q
		\le(nw)^tw^{-t(t+2)}q
		=q.\qedhere\]
	\end{subproof}
	\cref{claim:brooms1,claim:brooms2} together verify the first outcome of the lemma.
	This completes the proof of \cref{lem:star}.
\end{proof}
We will also need the following simple extension of the fact that every graph $G$ has degeneracy at least $\chi(G)-1$.
\begin{lemma}
	\label{lem:chidelta}
	For every integer $p\ge1$, every graph $G$ with $\chi(G)>p$ has an induced subgraph $F$ with minimum degree at least $p$ and $\chi(F)\ge \chi(G)-p$.
\end{lemma}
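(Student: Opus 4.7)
The plan is to mimic the classical degeneracy argument giving $\chi(G)\le 1+\text{degeneracy}(G)$, but with threshold $p$ in place of $\chi(G)$. Concretely, I would iteratively delete vertices of degree less than $p$ from $G$: set $G_0:=G$, and while some vertex $v_i\in V(G_{i-1})$ has degree less than $p$ in $G_{i-1}$, put $G_i:=G_{i-1}\setminus\{v_i\}$; otherwise stop. Let $F$ be the final induced subgraph produced. By construction, every vertex of $F$ has degree at least $p$ in $F$, so $\delta(F)\ge p$ as required.

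Next I would show $\chi(G)\le \chi(F)+p$ by a greedy extension. Starting from a proper colouring of $F$ using $\chi(F)$ colours, reintroduce the removed vertices in the reverse order $v_k,v_{k-1},\ldots,v_1$ in which they were deleted. At the step when $v_i$ is added back, its neighbours in the current graph are precisely its neighbours in $G_{i-1}$, which number fewer than $p$; hence at most $p-1$ colours are forbidden at $v_i$. Allotting a palette of $\chi(F)+p$ colours therefore leaves at least one available colour at every step, producing a proper colouring of $G$ with $\chi(F)+p$ colours. Rearranging gives $\chi(F)\ge \chi(G)-p$, and since $\chi(G)>p$ this also forces $F$ to be nonempty.

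There is no real obstacle here; the argument is entirely routine, and the only thing to check carefully is that in the reverse-insertion step the relevant degree bound is exactly the one recorded at the moment of deletion, which it is because $v_i$'s neighbours in $G_{i-1}$ form a superset of its neighbours among the already-re-inserted vertices.
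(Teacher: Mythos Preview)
Your proof is correct and essentially identical to the paper's: both iteratively strip vertices of degree below $p$, obtaining $F$ with $\delta(F)\ge p$, and then use that the removed vertices form a $(p-1)$-degenerate set to get $\chi(G)\le\chi(F)+p$. The paper phrases the last step as ``the deleted set has degeneracy less than $p$, hence chromatic number at most $p$'' and uses subadditivity, while you spell out the greedy extension explicitly; this is a cosmetic difference only.
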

\begin{proof}
	Let $\ell\ge0$ be maximal for which there are $v_1,\ldots,v_\ell\in V(G)$ such that for every $i\in[\ell]$,
	$v_i$ has fewer than $p$ neighbours in $V(G)\setminus\{v_1,\ldots,v_i\}$.
	Then $G[\{v_1,\ldots,v_\ell\}]$ has degeneracy less than $p$ and so $\chi(\{v_1,\ldots,v_\ell\})\le p<\chi(G)$.
	Let $F:=G\setminus\{v_1,\ldots,v_\ell\}$;
	then $\chi(F)\ge\chi(G)-p>0$ and $F$ has minimum degree at least $p$ by the maximality of $\ell$.
	This proves \cref{lem:chidelta}.
\end{proof}

We are now ready to prove \cref{thm:anti}, which we restate here for convenience.
\begin{theorem}
	\label{thm:brooms}
	For every $k,t\ge1$, there exists $d\ge1$ such that every non-complete graph $G$ with clique number $w\ge2$ contains either:
	\begin{itemize}
		\item an anticomplete pair $(A,B)$ with $\chi(A),\chi(B)\ge w^{-d}\chi(G)$; or
		
		\item an anticomplete pair $(P,Q)$ such that $G[P]$ is a $(k,t)$-broom and $\chi(Q)\ge w^{-d}\chi(G)$.
	\end{itemize}
\end{theorem}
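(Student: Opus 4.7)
The plan is to extract the spine of the broom from a covering blockade supplied by \cref{lem:covering}, and to locate the $t$ pendant leaves together with the high-chromatic anticomplete witness via a single application of the Scott--Seymour--Spirkl star lemma (\cref{lem:star}) inside the tail set $E$. First I apply \cref{lem:covering} with parameter $k$: if its anticomplete outcome holds we are already done, so I may assume a $k$-covering blockade $(D_1,\ldots,D_k,E)$ with $\chi(D_k),\chi(E)\ge w^{-6k}\chi(G)$. The first blockade property lets us select, working backwards from some $v_k\in D_k$, an induced path $v_1\text-v_2\text-\cdots\text-v_k$ with $v_i\in D_i$; this will be the prospective spine. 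Since $E$ is anticomplete to $D_1\cup\cdots\cup D_{k-1}$, every subset of $E$ is automatically anticomplete to $v_1,\ldots,v_{k-1}$, so I will seek the leaves inside $N_G(v_k)\cap E$ and the anticomplete witness inside $E\setminus N_G(v_k)$.

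For the main step, I use the third blockade property with $X=D_k$ and $Y=E$ to pick $v_k\in D_k$ satisfying $\chi(E\setminus N_G(v_k))\ge w^{-1}\chi(E)$. Setting $A:=N_G(v_k)\cap E$ and $B:=E\setminus N_G(v_k)$, I apply \cref{lem:star} to $G[E]$ with inputs $A$, $B$, and a parameter $q$ tuned so that $w^{-t(t+2)}q$ matches the target $w^{-d}\chi(G)$. If the second outcome of \cref{lem:star} holds, we obtain a stable set $P\subset A$ of size $t$ and a set $Q\subset B$ with $\chi(Q)\ge w^{-t(t+2)}q$ and $P$ anticomplete to $Q$. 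Then $\{v_1,\ldots,v_k\}\cup P$ induces a $(k,t)$-broom (the leaves are adjacent to $v_k$ by the choice of $A$, pairwise nonadjacent by stability, and anticomplete to $v_1,\ldots,v_{k-1}$ via the blockade), and $Q$ is anticomplete to the whole broom (to $v_k$ since $Q\subset B$, to $P$ by \cref{lem:star}, and to $v_1,\ldots,v_{k-1}$ since $Q\subset E$), giving the conclusion with $d=O(k+t^2)$.

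The main obstacle will be to dispatch the first outcome of \cref{lem:star}, which returns only a pair $(X,Y)\subset (A,B)$ with $\omega(X)+\omega(Y)\le\omega(G[E])\le w$ and $\chi(B\setminus Y)<q$, and the degenerate case where no admissible $v_k$ has $|A|\ge w^{t+2}$ so that \cref{lem:star} is inapplicable. The degenerate case should yield to a sparse bipartite argument: if every $v\in D_k$ has $|N_G(v)\cap E|<w^{t+2}$, then after removing a small-chromatic subset of $E$ that absorbs all edges to $D_k$, we obtain an anticomplete pair between $D_k$ and a large-chromatic residue of $E$, matching the first outcome of the theorem. For the first outcome of \cref{lem:star}, I would choose $q$ slightly below $w^{-1}\chi(E)$ so that $\chi(Y)\ge\chi(B)-q>0$ forces $\omega(Y)\ge 2$, hence $\omega(X)\le w-2$, and then iterate inside $G[Y]$ with fresh applications of \cref{lem:covering} and \cref{lem:star}, using \cref{lem:control} to stabilize the chromatic of the new tail set. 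The technical challenge is to arrange this iteration so that the final chromatic ratio remains $w^{-d}$ with $d$ depending only on $k$ and $t$, rather than blowing up with $w$ as a naive induction would.
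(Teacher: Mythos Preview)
Your plan has two genuine gaps, and both are precisely where the paper's proof does its real work.

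\textbf{The degenerate case.} Your sparse bipartite argument does not go through: even if every $v\in D_k$ satisfies $\lvert N_G(v)\cap E\rvert<w^{t+2}$, there is no bound on $\lvert D_k\rvert$, so the union $\bigcup_{v\in D_k}(N_G(v)\cap E)$ can have arbitrarily large chromatic number and you cannot ``remove a small-chromatic subset of $E$ that absorbs all edges to $D_k$''. Indeed, since the first outcome is assumed to fail, the set of vertices in $E$ anticomplete to $D_k$ has chromatic number below $w^{-d}\chi(G)$, so almost all of $\chi(E)$ lives among vertices with a neighbour in $D_k$. The paper sidesteps this entirely: it passes (via \cref{lem:chidelta}) to an induced subgraph $F$ of a $w^2$-controlled piece of $E$ with minimum degree at least $2w^{t+2}$, then finds $v\in D_k$ with a neighbour $u\in V(F)$ (using the failure of the first outcome), and takes $A$ to be a half of $N_F(u)$ on which $v$ is pure. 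The set $A$ is large by the minimum-degree condition on $F$, not by any property of $N_G(v)\cap E$; and the controlled property of $J\supseteq F$ is what guarantees that the relevant $B$ (either $V(F)\setminus N_G(v)$ or $V(F)\setminus N_F[u]$) still has chromatic number at least $w^{-5}\chi(J)$.

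\textbf{The iteration.} Your proposed recursion --- fresh calls to \cref{lem:covering} inside $G[Y]$ --- cannot give $d$ independent of $w$: each pass costs a factor $w^{O(k)}$ in chromatic number, and the first outcome of \cref{lem:star} only drops $\omega$ by a constant, so you may need $\Theta(w)$ passes. The paper keeps the \emph{same} covering blockade throughout and instead sets up, once, a maximal family $E_0,\ldots,E_\ell\subset E$ with $\sum_i\omega(E_i)\le w$ and $\sum_i\chi(E_i)$ within $\ell\cdot w^{-2}\chi(E)+O(\ell w^{t+2})$ of $\chi(E)$ (this is exactly the device from the proof of \cref{thm:quasi}). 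Since each $E_i$ is nonempty, $\ell<w$, so some $E_i$ has $\chi(E_i)\ge w^{-2}\chi(E)$; the high-min-degree subgraph $F$ and the application of \cref{lem:star} take place inside this $E_i$. If the first outcome of \cref{lem:star} fires, the pair $(X,Y)$ it returns satisfies $\omega(X)+\omega(Y)\le\omega(E_i)$ and $\chi(X)+\chi(Y)\ge\chi(E_i)-w^{-2}\chi(E)-3w^{t+2}$, so replacing $E_i$ by $X,Y$ contradicts the maximality of $\ell$. This is what keeps $d=6k+t(t+2)+9$ free of $w$.
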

\begin{proof}
	The proof first uses \cref{lem:covering} to obtain a long covering blockade, then iterates \cref{lem:star} inside the final block of the blockade to generate a long sequence of disjoint vertex subsets that together possess much chromatic number of the host graph while having a small sum of clique numbers (similar to the proof of \cref{thm:quasi}).
	
	We claim that $d:=6k+t(t+2)+9$ suffices.
	To this end,
	assume that the first outcome does not hold; then $\chi(G)\ge w^{d}$.
	By \cref{lem:covering}, $G$ contains either:
	\begin{itemize}
		\item an anticomplete pair $(A,B)$ with $\chi(A),\chi(B)\ge w^{-8k}\chi(G)$; or
		
		\item a $k$-covering blockade $(D_1,\ldots,D_k,E)$ with $\chi(D_k),\chi(E)\ge w^{-6k}\chi(G)$.
	\end{itemize}
	
	The first bullet cannot hold since the first outcome of the lemma fails; and so the second bullet holds.
	Let $\ell\ge0$ be maximal such that there are nonempty disjoint $E_0,E_1,\ldots,E_\ell\subset E$ satisfying:
	\begin{itemize}
		\item $\omega(E_0)+\omega(E_1)+\cdots+\omega(E_\ell)\le w$; and
		
		\item $\chi(E_0)+\chi(E_1)+\cdots+\chi(E_\ell)\ge \chi(E)-\ell(w^{-2}\chi(E)+3w^{t+2})$.
	\end{itemize}
	
	These are satisfied for $\ell=0$, taking $E_0=E$.
	For each $i\in\{0,1,\ldots,\ell\}$, since $E_i$ is nonempty, $\omega(E_i)\ge1$; and so $\ell< w$.
	Since $\chi(E)\ge w^{-6k}\chi(G)\ge w^{d-6k}\ge 3w^{t+4}$ by the choice of $d$, we see that
	\[\chi(E_1)+\cdots+\chi(E_{\ell})
	\ge \chi(E)-2\ell w^{-2}\chi(E)
	\ge \chi(E)-(1-w^{-1})\chi(E)
	= w^{-1}\chi(E).\]
	Thus, there exists $i\in\{0,1,\ldots,\ell\}$ with
	$$\chi(E_i)\ge \ell^{-1}(\chi(E_0)+\chi(E_1)+\cdots+\chi(E_{\ell}))
	\ge \ell^{-1}w^{-1}\chi(E).$$
	We may assume $i=0$.
	By \cref{lem:control} with $q=w^2$, $G[E_0]$ has a $w^2$-controlled induced subgraph $J$ with 
	\[\begin{aligned}
		\chi(J)\ge(1-w^{-3})\chi(E_0)
		&\ge \ell w^{-1}\chi(E_0)
		\ge w^{-2}\chi(E)\ge w^{-6k-2}\chi(G)\ge w^{d-6k-2}\ge2w^{t+7}
	\end{aligned}\]
	where the last inequality holds by the choice of $d$.
	Thus, \cref{lem:chidelta} gives an induced subgraph $F$ of $J$ with minimum degree at least $2w^{t+2}$ and 
	$$\chi(F)\ge\chi(J)-2w^{t+2}\ge w^{-2}\chi(E)-2w^{t+2}.$$
	
	The following property of $F$ is a consequence of the $w^2$-controlled property of $J$.
	\begin{claim}
		\label{claim:broom1} $\chi(F\setminus N_F[u])\ge w^{-5}\chi(J)$ for all $u\in V(F)$.
	\end{claim}
	\begin{subproof}
		Since $\chi(J)\ge 2w^{t+7}$,
		we see that $\chi(F)\ge \chi(J)-2w^{t+2}\ge (1-w^{-5})\chi(J)$.
		Hence,
		since $J$ is $w^{2}$-controlled, $\chi(N_J(u))<(1-w^{-4})\chi(J)$.
		Therefore, for every $u\in V(F)$,
		$$\begin{aligned}
			\chi(F\setminus N_F(u))&\ge\chi(F)-\chi(N_F(u)) \\
			&> (1-w^{-5})\chi(J)-(1-w^{-4})\chi(J)
			\ge w^{-5}\chi(J)>1=\chi(\{u\})
		\end{aligned}$$
		and so $\chi(F\setminus N_F[u])\ge w^{-5}\chi(J)$.
		This proves \cref{claim:broom1}.
	\end{subproof}
	Now, since $\chi(E)\ge w^{-6k}\chi(G)\ge w^{d-6k}$
	and $2w^{t+2-6k-d}\le w^{-3}$ by the choice of $d$, we have
	$$\begin{aligned}
		\chi(F)\ge w^{-2}\chi(E)-2w^{t+2}
		&\ge w^{-2}\chi(E)-2w^{t+2-6k-d}\chi(E)\\
		&\ge w^{-3}\chi(E)
		\ge w^{-6k-3}\chi (G)
		\ge w^{-d}\chi(G).
	\end{aligned}$$
	Thus, by the definition of covering blockades, the set $Z$ of vertices $z\in D_k$ with $\chi(F\setminus N_G(z))<w^{-1}\chi(F)$ satisfies $\chi(Z)<(1-w^{-2})\chi(D_k)$.
	Then
	$\chi(D_k\setminus Z)>w^{-2}\chi(D_k)\ge w^{-6k-2}\chi(G)\ge w^{-d}\chi(G)$.
	Hence, since the first outcome of the theorem fails,
	there exists $v\in D_k\setminus Z$ with a neighbour $u\in V(F)$.
	Since $u$ has degree at least $2w^{t+2}$ in $F$,
	there exists $A\subset N_F(u)$ such that $\abs A\ge w^{t+2}$ and $v$ is pure to $A$.
	Let $B:=V(F)\setminus N_G(v)$ if $v$ is complete to $A$, and let $B:=V(F)\setminus N_F[u]$ if $v$ is anticomplete to $A$;
	then $A,B$ are disjoint and $\chi(B)\ge \min(w^{-5}\chi(J),w^{-1}\chi(F))\ge w^{-5}\chi(J)$ by \cref{claim:broom1}.
	\begin{claim}
		\label{claim:broom2}
		$F$ contains an anticomplete pair $(P,Q)\subset (A,B)$ such that $P$ is a stable set of size $t$ and $\chi(Q)\ge w^{-d}\chi(G)$.
	\end{claim}
	\begin{subproof}
		Let $s:=w^{-6k-7}\chi(G)\le w^{-7}\chi(E)\le w^{-5}\chi(J)\le \chi(B)$.
		By \cref{lem:star}, $F$ contains either:
		\begin{itemize}
			\item a pair $(X,Y)\subset(A,B)$ with $\omega(X)+\omega(Y)\le \omega(F)$, $\abs{A\setminus X}<w^{t+2}$, and $\chi(B\setminus Y)<s$; or
			
			\item an anticomplete pair $(P,Q)\subset (A,B)$ where $P$ is a stable set of size $t$ and $\chi(Q)\ge w^{-t(t+2)}s$.
		\end{itemize}
		
		If the second bullet holds then we are done since the choice of $d$ yields
		$$\chi(Q)\ge w^{-t(t+2)}s=w^{-t(t+2)-6k-7}\chi(G)\ge w^{-d}\chi(G).$$
		Thus, suppose that the first bullet holds.
		Then since $\abs{A\setminus X}<w^{t+2}\le\abs A$ and $\chi(B\setminus Y)<q\le \chi(B)$,
		we see that $X,Y$ are nonempty.
		Because
		\[\chi(F\setminus (X\cup Y))
		\le \chi(A\setminus X)+\chi(B\setminus Y)
		<w^{t+2}+s
		\le w^{t+2}+w^{-7}\chi(E)\]
		we deduce that
		\begin{align*}
			\chi(X)+\chi(Y)
			&\ge \chi(F)-w^{t+2}-w^{-7}\chi(E)\\
			&\ge \chi(J)-3w^{t+2}-w^{-7}\chi(E)\\
			&\ge (1-w^{-3})\chi(E_0)-3w^{t+2}-w^{-7}\chi(E)\\
			&\ge \chi(E_0)-2w^{-3}\chi(E)-3w^{t+2}
			\ge \chi(E_0)-w^{-2}\chi(E)-3w^{t+2}.
		\end{align*}
		It follows that
		$$\sum_{j=1}^\ell\omega(E_j)+\omega(X)+\omega(Y)\le
		\sum_{j=1}^\ell\omega(E_j)+\chi(F) \le\sum_{j=0}^\ell\omega(E_j)\le w,$$
		and 
		\begin{align*}
			\sum_{1\le j\le\ell}\chi(E_j)
			+\chi(X)+\chi(Y)
			&=\sum_{0\le j\le \ell}\chi(E_j)
			+(\chi(X)+\chi(Y)-\chi(E_0))\\
			&\ge \chi(E)-(\ell+1)(w^{-2}\chi(E)+3w^{t+2})
		\end{align*}
		and so $E_0,E_1,\ldots,E_{i-1},X,Y,E_{i+1},\ldots,E_\ell$ contradict the maximality of $\ell$.
		This completes the proof of \cref{claim:broom2}.
	\end{subproof}
	
	Now, if $v=v_k$ is complete to $P$, then $\{v_1,\ldots,v_k\}\cup P$ and $Q$ satisfy the second outcome of the theorem;
	and if $v=v_k$ is anticomplete to $P$, then $\{v_2,\ldots,v_k,u\}\cup P$ and $Q$ do.
	This proves~\cref{thm:brooms}.
\end{proof}

\section{Polynomial versus linear complete pairs in $P_5$-free graphs}
\label{sec:llp5}
\subsection{Basic facts}
Due to its relevance in this section, we will reproduce the well-known Gy\'arf\'as path argument~\cite{MR382051}, as follows.
\begin{theorem}
	[Gy\'arf\'as]
	\label{thm:gs}
	For every $k\ge4$, every $P_k$-free graph $G$ with $\chi(G)\ge 2$ has a vertex $v$ with $\chi(N_G(v))\ge \frac1{k-2}\chi(G)$.
	Consequently, for every $w\ge 2$, if $\omega(G)\le w$ then $\chi(G)\le (k-2)^{w-1}$.
\end{theorem}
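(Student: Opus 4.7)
My plan is to prove the main assertion by the classical Gy\'arf\'as path argument via contradiction, and then derive the $\chi$-bound by induction on $\omega(G)$.

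For the main assertion, assume $G$ is connected (otherwise apply the argument to a component of maximum chromatic number) and write $c:=\chi(G)$. Suppose for a contradiction that $\chi(N_G(v))<c/(k-2)$ for every $v\in V(G)$. I would inductively construct an induced path $v_1v_2\cdots v_{k-1}$ together with a nested sequence of connected vertex sets $A_1\supseteq A_2\supseteq\cdots\supseteq A_{k-2}$ as follows: pick $v_1$ arbitrarily and let $A_1$ be a component of $G\setminus N_G[v_1]$ with largest chromatic number; then, given $v_1,\ldots,v_i$ and $A_i$, pick $v_{i+1}\in A_{i-1}\cap N_G(v_i)$ with a neighbour in $A_i$ (such a vertex exists because $A_{i-1}$ is connected, $A_i\subsetneq A_{i-1}$, and every edge from $A_i$ into $A_{i-1}\setminus A_i$ must pass through $N_G(v_i)$), and let $A_{i+1}$ be a largest-$\chi$ component of $G[A_i\setminus N_G(v_{i+1})]$. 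By construction $v_{i+1}\in A_{i-1}\subseteq V(G)\setminus N_G[v_1]\setminus N_G(v_2)\setminus\cdots\setminus N_G(v_{i-1})$, so $v_{i+1}$ is anticomplete to $\{v_1,\ldots,v_{i-1}\}$ and adjacent to $v_i$, keeping the partial sequence induced.

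The key accounting is that since $v_1$ has no edge into $V(G)\setminus N_G[v_1]$, it can share a colour with the colouring of $A_1$, giving $\chi(A_1)\ge c-\chi(N_G(v_1))$; repeating the same disjoint-palette estimate at each subsequent step yields
\[
\chi(A_i)\;\ge\;c-\sum_{j=1}^{i}\chi(N_G(v_j)).
\]
Under our standing assumption this exceeds $c-(k-2)\cdot c/(k-2)=0$ at $i=k-2$, so all of $A_1,\ldots,A_{k-2}$ are nonempty and the whole construction goes through. Taking any $w\in A_{k-2}\cap N_G(v_{k-1})$, which exists by the choice of $v_{k-1}$, the path $v_1v_2\cdots v_{k-1}w$ is an induced $P_k$ in $G$, a contradiction. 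The one delicate point is that $k-2$ (rather than $k-1$) subtractions of $c/(k-2)$ must still leave positive chromatic number, and this is precisely why $v_1$ itself is not counted among the $\chi(N_G(v_j))$ terms at the first step.

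For the consequence, induct on $w$: when $w=1$ the graph is edgeless and $\chi(G)\le 1=(k-2)^0$; when $w\ge 2$, if $\chi(G)\le 1$ there is nothing to prove, and otherwise the main assertion yields $v$ with $\chi(N_G(v))\ge\chi(G)/(k-2)$, so since $\omega(N_G(v))\le w-1$ the inductive hypothesis gives $\chi(N_G(v))\le(k-2)^{w-2}$, whence $\chi(G)\le(k-2)\chi(N_G(v))\le(k-2)^{w-1}$.
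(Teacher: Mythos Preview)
Your proof is correct and follows essentially the same Gy\'arf\'as path argument as the paper: both assume $\chi(N_G(v))<\chi(G)/(k-2)$ for all $v$, grow an induced path while tracking a connected ``remainder'' set whose chromatic number drops by less than $\chi(G)/(k-2)$ at each step, and reach a $P_k$ after $k-2$ steps; the induction on $w$ for the second assertion is identical. The only cosmetic differences are that the paper packages the iteration as a maximal-$\ell$ argument rather than an explicit step-by-step construction, and you should state explicitly that $A_0:=V(G)$ so that the rule for choosing $v_2$ is well defined.
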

\begin{proof}
	Suppose that the first assertion is not true.
	We may assume $G$ is connected.
	Let $v\in V(G)$;
	then $\chi(G\setminus N_G(v))>\frac{k-3}{k-2}\chi(G)\ge\frac12\chi(G)\ge1$.
	Thus $G\setminus N_G[v]$ has a component with chromatic number $\chi(G\setminus N_G(v))>\frac{k-3}{k-2}\chi(G)$.
	Since $G$ is connected,
	every component of $G\setminus N_G[v]$ has a vertex with a neighbour in $N_G(v)$.
	Thus, there exists $\ell\in\{2,\ldots,k-2\}$ maximal for which there is an induced path $v_1\text-v_2\text-\cdots\text-v_\ell$ in $G$ and a connected induced subgraph $D$ of $G\setminus\{v_1,\ldots,v_\ell\}$ such that:  $\chi(D)>\frac{k-1-\ell}{k-2}\chi(G)$,
	$\{v_1,\ldots,v_{\ell-1}\}$ is anticomplete to $V(D)$, and $v_\ell$ has a neighbour in $V{(D)}$.
	Because
	$$\chi(D\setminus N_G(v_\ell))\ge \chi(D)-\chi(N_G(v_\ell))>\frac{k-2-\ell}{k-2}\chi(G),$$
	there is a component $D'$ of $D\setminus N_G(v_\ell)$ with $\chi(D')=\chi(D\setminus N_G(v_\ell))>\frac{k-2-\ell}{k-2}\chi(G)$.
	Since $D$ is connected, there exists $v_{\ell+2}\in V(D')$ with a neighbour $v_{\ell+1}\in N_G(v_\ell)\cap V(D)$.
	Then $v_1\text-v_2\text-\cdots\text-v_\ell\text-v_{\ell+1}\text-v_{\ell+2}$ is an induced path in $G$; and so $\ell<k-2$ since $G$ is $P_k$-free.
	But then $v_1\text-v_2\text-\cdots\text-v_\ell\text-v_{\ell+1}$ and $D'$ contradict the maximality of $\ell$.
	
	That proves the first assertion of the theorem; and the second one follows by induction on $w$, noting that the neighbourhood of $v$ has clique number at most $w-1$. 
	The proof of \cref{thm:gs} is complete.
\end{proof}
	For a graph $G$, a vertex $v\in V(G)$ is {\em mixed} on $S\subset V(G)\setminus\{v\}$ if it has a neighbour and a nonneighbour in $G$.
	The following simple fact about $P_5$-free graphs will be used frequently in the rest of the paper.
\begin{lemma}
	\label{lem:mixed}
	For every $P_5$-free graph $G$ and every anticomplete pair $(A,B)$ in $G$ with $A,B$ nonempty, no vertex $v\in V(G)$ is mixed on both $A$ and $B$.
\end{lemma}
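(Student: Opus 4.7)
The plan is to argue by contradiction and exhibit an induced $P_5$ on (or near) five canonical vertices. Suppose some $v\in V(G)$ is mixed on both $A$ and $B$. Then I may select $a\in A\cap N_G(v)$, $a'\in A\setminus N_G[v]$, $b\in B\cap N_G(v)$, and $b'\in B\setminus N_G[v]$. Since $v\notin A\cup B$ (this is built into the definition of mixed on a set not containing $v$) and $A,B$ are disjoint, the five vertices $a',a,v,b,b'$ are pairwise distinct.

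Next I inspect the induced subgraph on $\{a',a,v,b,b'\}$. By construction, $va$ and $vb$ are edges while $va'$ and $vb'$ are non-edges; and since $(A,B)$ is anticomplete, all four cross-pairs $ab$, $ab'$, $a'b$, $a'b'$ are non-edges as well. The only remaining pairs that can go either way are $aa'$ and $bb'$. Crucially, if both $aa'\in E(G)$ and $bb'\in E(G)$, then the induced subgraph on these five vertices is precisely the path $a'{-}a{-}v{-}b{-}b'$, an induced $P_5$ in $G$, contradicting the hypothesis.

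It therefore suffices to refine the choices of $a,a'$ and of $b,b'$ so that $aa'\in E(G)$ and $bb'\in E(G)$. For this I would walk along a shortest path in $G[A]$ from the set $A\cap N_G(v)$ to the set $A\setminus N_G[v]$: the terminal edge of such a path necessarily joins a neighbour of $v$ to a non-neighbour of $v$, and I take these two endpoints to be the refined pair $(a,a')$. The same manoeuvre performed inside $G[B]$ produces the refined pair $(b,b')$. The five refined vertices now induce the forbidden $P_5$.

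The only subtle point — and the one place where I expect any technical friction — is this final refinement step: it relies on connectedness of the relevant portion of $A$ (respectively $B$) joining $N_G(v)$ to $V(G)\setminus N_G[v]$, so that a shortest path of the stated form exists. Once that is granted, the rest is a purely local adjacency check driven by the anticomplete hypothesis on $(A,B)$ and the definition of mixed.
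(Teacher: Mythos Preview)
Your approach is exactly the paper's: assume $v$ is mixed on both sides, produce edges $aa'\in E(G[A])$ and $bb'\in E(G[B])$ with $va,vb\in E(G)$ and $va',vb'\notin E(G)$, and read off the induced path $a'\text{-}a\text{-}v\text{-}b\text{-}b'$. The paper simply asserts the existence of such edges in one line and moves on.

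The friction you flag in the refinement step is not merely technical; it is a genuine gap in the lemma \emph{as stated}. With no connectedness hypothesis on $G[A]$ and $G[B]$ the statement is false: take $V(G)=\{v,a_1,a_2,b_1,b_2\}$ with only the edges $va_1$ and $vb_1$, and set $A=\{a_1,a_2\}$, $B=\{b_1,b_2\}$. Then $(A,B)$ is anticomplete, $v$ is mixed on both, and $G$ has only two edges, so it is certainly $P_5$-free. Your shortest-path manoeuvre cannot be carried out here because no component of $G[A]$ meets both $N_G(v)$ and its complement; the paper's proof fails at the same point, since no edge $a_1a_2\in E(G[A])$ of the required type exists. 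In every application of the lemma in the paper the sets in question are vertex sets of connected subgraphs (components, in fact), so the argument goes through there; but strictly speaking the lemma needs the extra hypothesis that $G[A]$ and $G[B]$ are connected (or, equivalently, one should read ``mixed on $S$'' as ``mixed on some component of $G[S]$''). You were right to be suspicious.
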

\begin{proof}
	Suppose not.
	Then there are $a_1a_2\in E(G[A])$ and $b_1b_2\in E(G[B])$ with $a_1v,b_1v\in E(G)$ and $a_2v,b_2v\nin E(G)$;
	and so $a_2\text-a_1\text-v\text-b_1\text-b_2$ would be an induced $P_5$ in $G$, contrary to the $P_5$-freeness of $G$.
	This proves \cref{lem:mixed}.
\end{proof}
\subsection{Colourful induced subgraphs}
In what follows, for $\eps>0$, say that a graph $G$ is {\em $\eps$-colourful} if $\chi(G\setminus N_G[v])<\eps\cdot\chi(G)$ for all $v\in V(G)$.
The proof method of \cref{thm:mainll} is via the following result.
\begin{lemma}
	\label{lem:locdense}
	There exists $a\ge6$ such that for every $\eps\in(0,\frac12)$, every $P_5$-free graph $G$ with clique number at most $w\ge2$ contains either:
	\begin{itemize}
		\item an $\eps$-colourful induced subgraph $J$ with $\chi(J)\ge 2^{-6}\chi(G)$; or
		
		\item a complete pair $(A,B)$ with $\chi(A)\ge w^{-a}\chi(G)$ and $\chi(B)\ge 2^{-8}\eps\cdot \chi(G)$.
	\end{itemize}
\end{lemma}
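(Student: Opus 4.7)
The plan is to argue by contradiction, assuming that no $\eps$-colourful induced subgraph $J \subseteq G$ has $\chi(J) \geq 2^{-6}\chi(G)$, and then constructing the complete pair asserted in the second outcome. Under this assumption, every induced subgraph $H \subseteq G$ with $\chi(H) \geq 2^{-6}\chi(G)$ admits a ``witness'' vertex $v \in V(H)$ with $\chi(H \setminus N_H[v]) \geq \eps\chi(H)$, and such witnesses will drive the construction.

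First I would reduce to a cleaner setting by applying \cref{lem:control} with a parameter $q$ tuned (say $q$ polynomial in $w$) so that the resulting $q$-controlled induced subgraph retains at least a constant fraction of $\chi(G)$; this prevents any single vertex's neighbourhood from absorbing almost all of the chromatic number. Inside this controlled subgraph, I would run a greedy iteration. Starting from $(A_0, B_0) = (\emptyset, V(G))$, at step $i$ I pick a witness vertex $v_i \in B_i$ for the non-$\eps$-colourful property and use it to extend $(A_i, B_i)$ to $(A_{i+1}, B_{i+1})$ with $A_{i+1} \supseteq A_i$ complete to $B_{i+1} \subseteq B_i$. The extension step is where $P_5$-freeness enters: invoking \cref{lem:mixed} on a suitable anticomplete pair between a vicinity of $v_i$ and a high-chromatic connected component of $G[B_i \setminus N_G[v_i]]$ gives a dichotomy on $B_i \cap N_G(v_i)$, so that most such vertices are pure (either complete or anticomplete) to a large-chromatic core. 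Taking $B_{i+1}$ to be the core and adding a suitable piece to $A_i$ gives the extension with controlled chromatic loss.

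The principal difficulty is maintaining both chromatic lower bounds simultaneously: $\chi(A) \geq w^{-a}\chi(G)$ is polynomial in $\chi(G)$ (and can vastly exceed $w$), while $\chi(B) \geq 2^{-8}\eps \chi(G)$ is linear in $\chi(G)$ scaled by $\eps$. When $\chi(G) \gg w^a$, $A$ can no longer be a clique (which gives only $\chi(A) \leq w$), so the extension must add entire \emph{blocks} of high chromatic number to $A$ rather than individual vertices; blocks would be produced using the Gy\'arf\'as path argument (\cref{thm:gs}) applied to the neighbourhood structure inside $G[B_i \cap N_G(v_i)]$, and their completeness to $B_{i+1}$ enforced via further applications of \cref{lem:mixed}. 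The two chromatic scales interact: each extension typically costs $B$ a factor of $1 - O(\eps)$, while growing $A$ up to $w^{-a}\chi(G)$ requires on the order of $w$ such steps to overcome the constant clique-number constraint.

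The hard part will be executing this balance precisely: the constants $2^{-6}$ and $2^{-8}\eps$ suggest that the argument uses only an absolute bounded number of halving dichotomies combined with a single $\eps$-stripping step, while the polynomial exponent $a$ must absorb the $O(w)$ block-extension steps and the constant-factor loss from the controlled-subgraph reduction. I expect the main technical content to be an inductive/maximality argument showing that, at every stage, either $P_5$-freeness (via \cref{lem:mixed}) cleanly produces the next block-extension, or a failure to do so witnesses a local $\eps$-colourful structure inside $G[B_i]$ of chromatic number at least $2^{-6}\chi(G)$, contradicting the running assumption.
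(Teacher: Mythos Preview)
Your plan has a real tension that you yourself flag but do not resolve. You say the constants $2^{-6}$ and $2^{-8}\eps$ suggest only a bounded number of halvings plus a single $\eps$-strip, yet in the same breath you say the exponent $a$ must absorb $O(w)$ block-extension steps. These cannot both be true: if growing $A$ up to chromatic number $w^{-a}\chi(G)$ requires $\Theta(w)$ rounds, and each round shaves even a constant factor from $\chi(B)$, then $\chi(B)$ ends up at $c^{-w}\chi(G)$, not $2^{-8}\eps\,\chi(G)$. Conversely, if you only allow a bounded number of rounds, you have no mechanism for pushing $\chi(A)$ beyond $O(\chi(G)/\mathrm{poly}(w))$ from a single Gy\'arf\'as step. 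The ``block-extension via \cref{lem:mixed}'' sketch does not explain how one round produces a block of chromatic number $w^{-a}\chi(G)$ complete to a set of chromatic number $\Omega(\eps\,\chi(G))$; \cref{lem:mixed} only gives purity, not completeness in the direction you need, and the dichotomy it yields may well put most of the chromatic mass on the \emph{anticomplete} side.

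The paper does not build $(A,B)$ by such an iteration. Instead it deduces \cref{lem:locdense} from an auxiliary statement (\cref{lem:linanti}) that has a \emph{third} outcome: a linear-chromatic \emph{anticomplete} pair. Assuming neither the colourful nor the complete outcome holds, one passes to a controlled subgraph $F$ via \cref{lem:ctrl}, applies \cref{lem:linanti} inside every high-chromatic induced subgraph of $F$ to conclude that each such subgraph contains an anticomplete pair with both parts of chromatic number $\ge c\,\eps\,\chi(F)$, and then invokes the terminal-partition machinery (\cref{lem:term}) to turn this ubiquity of anticomplete pairs into a single complete pair with the required bounds. The key structural idea you are missing is precisely this terminal-partition argument: it is what converts anticomplete structure (which is what $P_5$-freeness and \cref{lem:mixed} most naturally produce) into the complete pair, and it does so in one shot rather than by a greedy growth of $A$.
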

We actually conjecture that the second outcome of this lemma can be dropped (with $2^{-6}$ in the first outcome replaced by some constant depending on $\eps$ only); and more generally the improved statement remains true with $P_5$ replaced by any forest, as follows:
\begin{conjecture}
	\label{conj:clful}
	For every $\eps>0$ and every forest $T$, there exists $\delta>0$ such that every $T$-free graph $G$ has an $\eps$-colourful induced subgraph $F$ with $\chi(F)\ge\delta\cdot\chi(G)$.
\end{conjecture}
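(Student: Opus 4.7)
The plan is to prove Conjecture~\ref{conj:clful} by induction on $\abs{V(T)}$, with the base case $\abs{V(T)}\le1$ trivial (take $F:=G$). Fix a forest $T$ with at least one edge and assume the statement for every forest on fewer vertices; let $\delta'=\delta'(\eps,T')$ denote the constant delivered by the inductive hypothesis for any forest $T'$ with $\abs{V(T')}<\abs{V(T)}$. I would first reduce to the case where $T$ is a tree, and then attack the tree case by a peeling procedure combined with applications of the inductive hypothesis to the peeled pieces.

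For the disjoint-union reduction, write $T=T_1\sqcup T_2$ with $T_1,T_2$ nonempty, and let $G$ be $T$-free. If $G$ is $T_1$-free, the inductive hypothesis for $T_1$ already delivers the desired colourful subgraph. Otherwise $G$ contains a copy of $T_1$, and then $T$-freeness forces every induced subgraph of $G$ anticomplete to this copy to be $T_2$-free. Using \cref{thm:anti} (when $T_2$ is a broom), or more generally a conjectural extension of it to arbitrary forests, one should be able to extract a large-chromatic $T_2$-free induced subgraph of $G$, to which the inductive hypothesis for $T_2$ then applies.

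For the tree case, pick a leaf $\ell$ of $T$ with unique neighbour $p$, let $T':=T\setminus\{\ell\}$, and set $w:=\omega(G)$. Peel iteratively: $F_0:=G$, and whenever $F_i$ is not $\eps$-colourful take $v_i\in V(F_i)$ witnessing this and set $F_{i+1}:=F_i\setminus N_{F_i}[v_i]$, continuing as long as $\chi(F_i)\ge(\delta/\delta')\chi(G)$, where $\delta=\delta(\eps,T)$ is the constant to be defined. If at any step $F_i$ is $T'$-free, the inductive hypothesis applied to $T'$ inside $F_i$ produces an $\eps$-colourful subgraph of $F_i$ with chromatic number at least $\delta'\chi(F_i)\ge\delta\chi(G)$, and we are done. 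Otherwise every $F_i$ contains a copy of $T'$; passing if needed to a $w^2$-controlled subgraph via \cref{lem:control} and exploiting the anticomplete structure between the peeled vertices $v_j$ (for $j<i$) and $F_i$ together with the large degeneracy of $F_i$ (via \cref{lem:chidelta}), the goal is to stitch an extra leaf onto some copy of $T'$ in $F_i$ using a suitable neighbour in $G$ of some $v_j$, thereby building a copy of $T$ and contradicting $T$-freeness.

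The main obstacle is this stitching step: each peeled vertex $v_j$ is, by construction, anticomplete to $F_i$, so $v_j$ itself cannot serve as the extra leaf $\ell$; one must instead find $y\in N_G(v_j)$ adjacent to the image of $p$ in some $T'$-copy and anticomplete to the remainder. Controlling the combinatorics of such attachments is, in essence, the \gas{} conjecture~\ref{conj:gs} for $T$, and no general technique for this is currently known. I therefore expect Conjecture~\ref{conj:clful} to be presently tractable only for those $T$ for which \cref{conj:gs} has been established (paths, double stars, trees of radius two, etc.), by porting the structural embedding lemmas (e.g.\ the Kierstead--Penrice template method~\cite{MR1258244} or the arguments of~\cite{MR4472775}) that drive the known cases of \gas{}.
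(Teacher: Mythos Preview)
This statement is a \emph{conjecture} in the paper, not a theorem: the paper explicitly says ``we have not been able to decide it when $T=P_5$ or even when $T$ is the two-edge matching.'' So there is no paper proof to compare against, and your proposal should be read as an attempted attack on an open problem.

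Your disjoint-union reduction has a genuine gap even at the level of what it would deliver. The tool you invoke, \cref{thm:anti}, produces an anticomplete pair $(P,Q)$ with $\chi(Q)\ge w^{-d}\chi(G)$, where $w=\omega(G)$. Applying the inductive hypothesis inside $Q$ then yields an $\eps$-colourful subgraph with chromatic number at least $\delta' w^{-d}\chi(G)$, which depends on $w$; but \cref{conj:clful} demands $\delta$ depending only on $\eps$ and $T$. No known anticomplete-pair result for forests gives a \emph{linear} $\chi$-fraction independent of the clique number, and indeed such a statement can fail (cf.\ the discussion after \cref{thm:quasichi}). So the reduction does not close.

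Your final expectation---that the conjecture should be tractable whenever \cref{conj:gs} is known for $T$---is also too optimistic. The two-edge matching $2K_2$ satisfies \gas{} with a polynomial bound (Wagon), yet the paper singles out precisely $T=2K_2$ as an open case of \cref{conj:clful}. In your framework this corresponds to $T_1=T_2=K_2$: any subgraph anticomplete to an edge is edgeless and hence has chromatic number at most one, so the reduction collapses entirely. This shows that knowing \gas{} for $T$ is not, by itself, enough to run either your disjoint-union step or your peeling-and-stitching step; something genuinely new is needed even in this smallest nontrivial case.
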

In other words, this conjecture says that every graph with no copy of a given forest contains a \dd locally dense\ee{} induced subgraph with linear chromatic number.
If true, \cref{conj:clful} would be an analogue for chromatic number of R\"odl's theorem~\cite{MR837962} that every graph with a forbidden induced subgraph contains a linear-sized induced subgraph with very high minimum degree or very low maximum degree; but we have not been able to decide it when $T=P_5$ or even when $T$ is the two-edge matching.
It is not hard to see that \cref{conj:clful} holds for $T=P_4$, and a simple argument proves it (with $\delta=\abs \eps^{\abs T}/\abs T$) when $T$ is a star (we omit the proof).
When $T=P_5$, it would already be quite interesting if the following is true:
\begin{conjecture}
	\label{conj:modp5}
	There exists $\delta>0$ such that every $P_5$-free graph $G$ has an induced subgraph $F$ such that $\chi(F)\ge\delta\cdot\chi(G)$ and $\chi(N_F(v))\ge\delta\cdot \chi(F)$ for all $v\in V(F)$.
\end{conjecture}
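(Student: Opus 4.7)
The plan is to apply \cref{lem:locdense} with a small absolute constant, say $\eps=1/3$, and argue that its first outcome already essentially suffices, so that the conjecture reduces to handling the second outcome. Suppose the first outcome holds with an $\eps$-colourful induced subgraph $J$ satisfying $\chi(J)\ge 2^{-6}\chi(G)$. Then for every $v\in V(J)$,
\[
\chi(N_J[v])\ge\chi(J)-\chi(J\setminus N_J[v])>(1-\eps)\chi(J)=\tfrac{2}{3}\chi(J),
\]
so $\chi(N_J(v))\ge\tfrac12\chi(J)$ whenever $\chi(J)\ge6$ (the conjecture is vacuous otherwise). Setting $F:=J$ and $\delta:=2^{-7}$ then verifies the required property in this case.

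The challenge is the second outcome of \cref{lem:locdense}, which yields a complete pair $(A,B)$ with $\chi(A)\ge w^{-a}\chi(G)$ and $\chi(B)\ge 2^{-8}\eps\cdot\chi(G)$. For the join-like subgraph $F:=G[A\cup B]$ we have $\chi(F)=\chi(A)+\chi(B)$, but only $\chi(N_F(v))\ge\chi(A)$ for $v\in B$ and $\chi(N_F(v))\ge\chi(B)$ for $v\in A$; when $w$ is large, the ratio $\chi(A)/\chi(F)$ can be as small as $w^{-a}$, falling short of any uniform~$\delta$. A natural remedy is to iterate \cref{lem:locdense} inside $G[B]$: since each complete-pair step reduces the clique number of the current block by at least $\omega(A)\ge1$, the iteration must terminate after at most $w$ rounds and eventually land in the $\eps$-colourful outcome. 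However, the multiplicative factor $2^{-8}\eps$ incurred at each round degrades the bound on $\chi(F)/\chi(G)$ to $c^w$ for some $c<1$, which again depends on $w$.

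The main obstacle is therefore to decouple the conclusion from the clique number, since each invocation of the second outcome of \cref{lem:locdense} introduces a factor polynomial in $w^{-1}$, while the target bound must be uniform in $w$. To overcome this I would attempt to strengthen \cref{lem:locdense} so that its second outcome produces $\chi(A)$ bounded below by an \emph{absolute} constant times $\chi(G)$, perhaps by exploiting the Bacs\'o--Tuza dominating-clique theorem for connected $P_5$-free graphs or structural results on homogeneous pairs in $P_5$-free graphs. Absent such an improvement, the present method plausibly yields only a weaker quasi-polynomial variant analogous to \cref{thm:quasi}, in which the linear lower bound $\delta\cdot\chi(G)$ is relaxed to $\chi(G)^{1-o(1)}$.
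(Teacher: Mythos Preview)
The statement you are addressing is \cref{conj:modp5}, which is presented in the paper explicitly as an open \emph{conjecture}; the paper does not prove it. Immediately before stating it, the author writes that ``it would already be quite interesting if the following is true,'' and no argument toward it is given. So there is no ``paper's own proof'' to compare your attempt against.

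Your write-up is not a proof either, and you seem aware of this: you correctly observe that the first outcome of \cref{lem:locdense} with $\eps=\tfrac13$ gives exactly the desired $\eps$-colourful subgraph, but that the second outcome only yields a complete pair whose smaller side has chromatic number $\ge w^{-a}\chi(G)$, which is useless for a bound independent of $w$. Your proposed fix---iterating inside $B$ until the colourful outcome appears---fails for precisely the reason you state: each round costs a constant factor and the number of rounds can be as large as $w$, so the final bound is exponential in $-w$. This diagnosis is accurate and matches the obstruction the paper implicitly leaves open; indeed the paper's stronger \cref{conj:clful} (of which \cref{conj:modp5} is a consequence for $T=P_5$) is also left open even for the two-edge matching.

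In short: there is no gap to name because you have not claimed a proof, and the paper has not given one. Your analysis of why \cref{lem:locdense} alone is insufficient is sound.
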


Back to \cref{lem:locdense}: let us now see how it implies \cref{thm:mainll} via the following.
\begin{lemma}
	\label{lem:densepair}
	Let $\eps\in(0,1)$, and let $G$ be an $\eps$-colourful $P_5$-free graph with $\chi(G)\ge2$ and clique number at most $w\ge2$.
	Then there is a complete pair $(A,B)$ in $G$ with $\chi(A)\ge w^{-32}\chi(G)$ and $\chi(B)\ge \frac{1-\eps}{2}\chi(G)$.
\end{lemma}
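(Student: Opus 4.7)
I begin by exploiting $\eps$-colourfulness locally: pick any vertex $v\in V(G)$, whereupon $\chi(G\setminus N_G[v])<\eps\chi(G)$ forces $\chi(N_G(v))\geq (1-\eps)\chi(G)-1$. Let $F:=G[N_G(v)]$, which is $P_5$-free with $\omega(F)\leq w-1$. If $F$ happens to be complete then $N_G(v)$ is a clique of size at most $w-1$, so $\chi(G)\leq w/(1-\eps)$, and the conclusion can be verified directly by taking any edge as the complete pair. So assume $F$ is non-complete.

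Next I would invoke Theorem~\ref{thm:anti} on $F$ in the $(k,t)=(4,1)$ case, i.e.\ for the $(4,1)$-broom $P_5$. The second outcome (an induced $P_5$) is forbidden since $F$ is $P_5$-free, so $F$ contains an anticomplete pair $(X,Y)$ with $\chi(X),\chi(Y)\geq (w-1)^{-d}\chi(F)$ for some absolute constant $d$ given by the $P_5$ case. Combined with the lower bound on $\chi(F)$ and assuming $d$ can be taken small enough (or handling $w$ small as a separate case via \cref{thm:gs}), this produces $\chi(X),\chi(Y)\geq w^{-32}\chi(G)$.

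Crucially, $(X,Y)$ is also anticomplete in the ambient graph $G$, so by \cref{lem:mixed} no vertex of $G$ is mixed on both $X$ and $Y$. Let $W_X=\{u\in V(G):X\subset N_G(u)\}$ and analogously $W_Y$. Then $(X,W_X)$ and $(Y,W_Y)$ are complete pairs in $G$. I would then argue that every vertex $z\notin W_X\cup W_Y$ has a non-neighbour in each of $X$ and $Y$, so by \cref{lem:mixed} it is anticomplete to at least one; hence $V(G)\setminus(W_X\cup W_Y)\subset (V(G)\setminus N_G[X])\cup (V(G)\setminus N_G[Y])$. Using $\eps$-colourfulness on a single $x\in X$ and a single $y\in Y$ gives $\chi(V(G)\setminus(W_X\cup W_Y))<2\eps\chi(G)$, and therefore $\chi(W_X)+\chi(W_Y)>(1-2\eps)\chi(G)$; by pigeonhole one of them has chromatic number at least $\tfrac{1-2\eps}{2}\chi(G)$, yielding the desired complete pair with a small slack in the constant.

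\textbf{Main obstacle.} The bound above gives $\tfrac{1-2\eps}{2}\chi(G)$ in place of $\tfrac{1-\eps}{2}\chi(G)$; closing this factor-of-two gap in the $\eps$ term is the heart of the argument. To fix this I expect one must exploit $P_5$-freeness more carefully than \cref{lem:mixed} alone: for instance, choosing the base vertex $v$ inside $X$ (rather than arbitrarily) so that $V(G)\setminus N_G[X]\subset V(G)\setminus N_G[v]$ absorbs one of the two $\eps\chi(G)$ contributions, or refining the ``anticomplete to $X$'' part to lie in a single vertex's non-neighbourhood. Handling the complete-$F$ subcase and verifying the numerical constants coming from the $P_5$ specialisation of \cref{thm:anti} are secondary but routine bookkeeping.
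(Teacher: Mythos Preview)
Your approach shares the right skeleton with the paper's---find a polynomially high-chromatic anticomplete pair, then use \cref{lem:mixed} to extract a complete pair---but the execution differs in a way that leaves the stated bound out of reach.

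\textbf{The $\eps$ loss.} You cover $V(G)$ by $W_X\cup W_Y\cup(\text{rest})$ and bound the rest by $(G\setminus N_G[x])\cup(G\setminus N_G[y])$, incurring $2\eps\chi(G)$. Your suggested repair (``choose $v\in X$'') cannot work in your setup since $X\subset N_G(v)$, and even if you produce $(X,Y)$ directly in $G$ it still does not collapse the two contributions into one. The paper avoids this by a maximality-plus-cutset argument: take $(A,B)$ anticomplete in $G$ (via \cref{lem:covering} with $k=4$, giving the exact $w^{-32}$), choose it with $\chi(A)+\chi(B)$ maximal and $G[A],G[B]$ connected, and let $S$ be a minimal cutset separating them. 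Maximality forces $G[A],G[B]$ to be components of $G\setminus S$ and $\chi(G\setminus S)=\max(\chi(A),\chi(B))$; $\eps$-colourfulness (applied once, to any $b\in B$) gives $\chi(A)<\eps\chi(G)$, so $\chi(S)\ge(1-\eps)\chi(G)$. Since every $s\in S$ has a neighbour in each of $A,B$, \cref{lem:mixed} makes $s$ complete to one side, and pigeonhole on the resulting bipartition of $S$ gives $\tfrac{1-\eps}{2}\chi(G)$. The single $\eps$ loss comes precisely from bounding $\chi(G\setminus S)$ by a \emph{maximum} over components rather than a union.

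\textbf{The $w^{-32}$ constant.} \cref{thm:anti} with $(k,t)=(4,1)$ yields $d=6k+t(t+2)+9=36$, not $32$, and applying it inside $N_G(v)$ costs an additional factor $(1-\eps)/2$; this cannot be repaired ``for small $w$ via \cref{thm:gs}''. The paper instead invokes \cref{lem:covering} with $k=4$ directly in $G$: the $k$-covering-blockade outcome forces an induced $P_5$, so only the anticomplete-pair outcome survives, with exponent exactly $8k=32$. Working in $G$ rather than in $N_G(v)$ is also what makes the cutset argument available.
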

\begin{proof}
	Since $\chi(G\setminus v)\ge\chi(G)-1\ge\frac12\chi(G)$ for all $v\in V(G)$,
	we may assume that $G$ is not complete.
	Then \cref{lem:covering} (with $k=4$) gives an anticomplete pair $(A,B)$ in $G$ with $\chi(A),\chi(B)\ge w^{-32}\chi(G)$;
	and we may assume $G[A],G[B]$ are connected.
	Among all such pairs $(A,B)$ in $G$, choose $(A,B)$ with $\chi(A)+\chi(B)$ maximal; and subject to these, with $\abs A+\abs B$ maximal.
	Since $\eps<1$ and $G$ is $\eps$-colourful, $G$ is connected;
	and so there is a minimal nonempty cutset $S$ separating $A,B$ in $G$.
	By the maximality of $(A,B)$, $G[A],G[B]$ are components of $G\setminus S$ and $\chi(G\setminus S)=\max(\chi(A),\chi(B))$.
	Because $G$ is $\eps$-colourful, we have $\chi(A),\chi(B)\le \eps\cdot \chi(G)$;
	and so $\chi(S)\ge(1-\eps)\chi(G)$.
	Now, since $G$ is $P_5$-free, \cref{lem:mixed} and the minimality of $S$ together give a partition $(P,Q)$ of $S$ such that $P$ is complete to $A$ and $Q$ is complete to $B$.
	We may assume $\chi(P)\ge\chi(Q)$;
	then $\chi(P)\ge\frac12\chi(S)\ge\frac{1-\eps}{2}\chi(G)$ and we are done.
	This completes the proof of \cref{lem:densepair}.
\end{proof}
We can now finish the proof of \cref{thm:mainll}.
\begin{proof}
	[Proof of \cref{thm:mainll}, assuming \cref{lem:locdense}]
	Let $a$ be given by \cref{lem:locdense};
	we claim that $b:=\max(a,40)$ suffices.
	To see this, we may assume $\chi(G)\ge w^b$, for otherwise the theorem is true by the Gy\'arf\'as path theorem \ref{thm:gs}.
	By \cref{lem:locdense} with $\eps=\frac12$, either:
	\begin{itemize}
		\item $G$ has an $\frac12$-colourful induced subgraph $J$ with $\chi(J)\ge 2^{-6}\chi(G)$; or
		
		\item there is a complete pair $(A,B)$ in $G$ with $\chi(A)\ge w^{-a}\chi(G)$ and $\chi(B)\ge 2^{-9}\chi(G)$.
	\end{itemize}
	If the first bullet holds, then since $\chi(J)\ge 2^{-6}\chi(G)\ge 2$,
	\cref{lem:densepair} gives a complete pair $(A,B)$ in $J$ with 
	$\chi(A)\ge w^{-32}\chi(J)\ge 2^{-6}w^{-32}\chi(G)\ge w^{-b}\chi(G)$ and 
	$\chi(B)\ge \frac14\chi(J)\ge 2^{-8}\chi(G)$
	by the choice of $b$ and we are done.
	If the second bullet holds
	then we are also done.
	This proves \cref{thm:mainll}.
\end{proof}

As such, the rest of this paper deals with the proof of \cref{lem:locdense}.

\subsection{Terminal partitions in controlled $P_5$-free graphs}
Recall that for $q\ge w\ge 2$, a graph $G$ with clique number at most $w$ is {\em $q$-controlled} if it is connected and $\chi(N_G(v))\le(1-q^{-2})\chi(G)$ for all $v\in V(G)$.
In the rest of this paper we will drop \dd$w$-\ee{} from \dd$w$-controlled\ee{} for notational convenience, and will be interested in controlled $P_5$-free graphs.
For convenience, let us restate the following consequence of \cref{lem:control} with $q=w$.
\begin{lemma}
	\label{lem:ctrl}
	For every $w\ge2$, every graph $G$ with clique number at most $w$ has a controlled induced subgraph $F$ with $\chi(F)>(1-w^{-1})\chi(G)$.
\end{lemma}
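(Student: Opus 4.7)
The plan is to obtain this as an immediate corollary of \cref{lem:control}, which is already stated and proved earlier in the excerpt. Specifically, \cref{lem:control} asserts that for every $q \ge w \ge 2$, a graph $G$ with clique number at most $w$ admits a $q$-controlled induced subgraph with chromatic number more than $(1 - wq^{-2})\chi(G)$. Since this section drops the prefix \dd{}$w$-\ee{} from \dd{}$w$-controlled\ee{} by convention, the appropriate specialization is to take $q = w$.

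Concretely, first I would verify that the hypothesis $q \ge w \ge 2$ is satisfied with $q := w$ (this is automatic since $w \ge 2$ by assumption). Then I would apply \cref{lem:control} with this choice of $q$ to obtain an induced subgraph $F$ of $G$ which is $w$-controlled and satisfies
\[
\chi(F) > \left(1 - w \cdot w^{-2}\right)\chi(G) = \left(1 - w^{-1}\right)\chi(G).
\]
Under the notational convention of this section, \dd{}$w$-controlled\ee{} is simply called \dd controlled\ee{}, so $F$ is the desired controlled induced subgraph, and the lemma follows.

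There is no genuine obstacle to overcome here; the statement is merely a convenient reformulation of the $q = w$ instance of \cref{lem:control}, packaged separately for repeated use in the remainder of the proof of \cref{lem:locdense}. The only thing one must double-check is that the chromatic-number bound $(1 - w^{-1})\chi(G)$ in the statement is a strict inequality (matching the strict inequality in \cref{lem:control}), which is indeed what the arithmetic produces.
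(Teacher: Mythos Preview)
Your proposal is correct and matches the paper exactly: the paper introduces \cref{lem:ctrl} with the sentence ``For convenience, let us restate the following consequence of \cref{lem:control} with $q=w$'' and gives no separate proof. Your specialization $q=w$ and the resulting arithmetic $(1-wq^{-2})=(1-w^{-1})$ are precisely what is intended.
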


Much of the argument in the rest of the paper deals with the following kind of partitions. 
For $p\ge0$ and for a connected graph $G$ with clique number at most $w\ge2$, there exists $k\ge0$ maximal such that there is a partition $(A_1,\ldots,A_k,B,D)$ of $V(G)$ satisfying:
\begin{itemize}
	\item $D$ is anticomplete to $A_1\cup\cdots\cup A_k$;
	
	\item each vertex in $B$ has a neighbour in $A_1\cup\cdots\cup A_k$;
	
	\item for each $i\in[k]$, the set $B_i$ of vertices in $B$ with a neighbour in $A_i$ satisfies $1\le\chi(B_i)\le w^{-4}\chi(G)$;
	
	\item $G[A_1],\ldots,G[A_k]$ are the components of $G\setminus (B\cup D)$,
	each with chromatic number at least $p$; and
	
	\item $\chi(D)\ge (1-w^{-2})\chi(G)$, and each vertex in $B$ has a neighbour in each component $C$ of $G[D]$ with $\chi(C)\ge(1-w^{-2})\chi(G)$.
\end{itemize}
(These conditions are satisfied for $k=0$, taking $B$ empty and $D=V(G)$.)
Such a partition is called a {\em $p$-terminal} partition of $G$.
Here is a useful property of terminal partitions in controlled $P_5$-free graphs: the last part in each such partition \dd occupies\ee{} much of the chromatic number of the graphs in question.
\begin{lemma}
	\label{lem:ter}
	Let $p\ge0$, and let $G$ be a controlled $P_5$-free graph with clique number at most $w$,
	with a $p$-terminal partition $(A_1,\ldots,A_k,B,D)$.
	Then 
	$G[D]$ has a unique component with chromatic number at least $(1-w^{-2})\chi(G)$, and
	$\chi(D)\ge(1-w^{-3})\chi(G)$.
\end{lemma}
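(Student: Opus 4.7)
The plan proceeds in two steps, one for each conclusion.

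\emph{Uniqueness.} If $k=0$ then $B=\emptyset$ and $D=V(G)$ is connected (as $G$ is controlled, hence connected), so the claim is trivial. Otherwise, suppose for contradiction that $G[D]$ has distinct components $C_1,C_2$ each with $\chi(C_j)\ge(1-w^{-2})\chi(G)$. Pick any $v\in B$, which is nonempty because $\chi(B_i)\ge1$. By the last bullet of the definition of $p$-terminal partition, $v$ has a neighbour in each $C_j$, while controlledness gives $\chi(N_G(v))<(1-w^{-2})\chi(G)\le\chi(C_j)$ and so $v$ has a nonneighbour in each $C_j$. Thus $v$ is mixed on both components of the anticomplete pair $(C_1,C_2)$, contradicting \cref{lem:mixed}.

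\emph{The chromatic bound.} Let $C$ be the unique big component. The key observation is that $\chi(G\setminus B)=\chi(C)$, which yields $\chi(G)\le\chi(B)+\chi(C)$ and hence $\chi(D)=\chi(C)\ge\chi(G)-\chi(B)$. Indeed, $G\setminus B$ is the pairwise-anticomplete union of $A_1,\ldots,A_k$ and the components of $G[D]$, so its chromatic number equals the maximum $\chi$ of its pieces. Uniqueness gives $\chi(C')<(1-w^{-2})\chi(G)\le\chi(C)$ for every component $C'\ne C$ of $G[D]$. Moreover, for each $i$ the nonemptiness of $B_i$ together with \cref{lem:mixed} applied to the anticomplete pair $(A_i,C)$---using that every $v\in B_i$ is mixed on $C$---forces $v$ to be complete to $A_i$, whence $A_i\subseteq N_G(v)$ and $\chi(A_i)\le\chi(N_G(v))<(1-w^{-2})\chi(G)\le\chi(C)$.

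It remains to prove $\chi(B)\le w^{-3}\chi(G)$. Setting $I(v):=\{i\in[k]:v\in B_i\}\subseteq[k]$, it suffices to exhibit a transversal $S\subseteq[k]$ of size at most $w$ with $I(v)\cap S\ne\emptyset$ for every $v\in B$, since then $B=\bigcup_{i\in S}B_i$ and subadditivity give $\chi(B)\le w\cdot w^{-4}\chi(G)=w^{-3}\chi(G)$. The plan is to use $P_5$-freeness via the connected graph $C$ to constrain $\{I(v):v\in B\}$: if $v_1,v_2\in B$ have disjoint $I(v_j)$ and are nonadjacent in $G$, then choosing $a_j\in A_{i_j}$ for any $i_j\in I(v_j)$, together with a shortest path in $C$ from a $C$-neighbour of $v_1$ to a $C$-neighbour of $v_2$ (which exists since $C$ is connected and each $v_j$ is mixed on $C$ by controlledness), yields an induced five-vertex path beginning at $a_1$, passing through $v_1$, along the $C$-path, and ending at $v_2$. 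This contradicts $P_5$-freeness, so pairwise-disjoint $I$'s must induce a clique in $G[B]$, giving matching number $\nu\le\omega(G[B])\le w$. An analogous $P_5$-construction applied to three pairwise nonadjacent vertices of $B$ whose $I$'s pairwise intersect with no common element forces every stable subset of $B$ to lie in some single $B_i$. The main obstacle is combining these two properties---a bounded matching number and a Helly-type property on stable sets---into a transversal bound of size at most $w$, which should follow from a K\"onig-type induction on $w$ by deleting a suitably chosen index at each step.
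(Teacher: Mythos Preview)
Your uniqueness argument and the reduction of the chromatic bound to $\chi(B)\le w^{-3}\chi(G)$ (via $\chi(G\setminus B)=\chi(C)$, using that each $A_i$ lies in the neighbourhood of some $v\in B_i$) are correct and match the paper. The gap is in how you bound $\chi(B)$.

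First, two repairs to your $P_5$-construction. The path must terminate at $a_2\in A_{i_2}$, not at $v_2$: if $v_1,v_2$ share a neighbour in $C$, the walk $a_1\text-v_1\text-c\text-v_2$ has only four vertices. And a shortest path \emph{inside} $C$ between a $C$-neighbour of $v_1$ and a $C$-neighbour of $v_2$ need not extend to an induced path once $v_1,v_2$ are appended, since $v_1$ or $v_2$ may be adjacent to interior vertices; you want an induced $v_1$--$v_2$ path in $G[V(C)\cup\{v_1,v_2\}]$. With these fixes, the argument actually shows the stronger fact that if $v_1,v_2\in B$ are nonadjacent and there exist $i\in I(v_1)\setminus I(v_2)$ and $j\in I(v_2)\setminus I(v_1)$, then $G$ contains an induced path on at least five vertices---disjointness of $I(v_1),I(v_2)$ is not needed.

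Second, and more importantly, your plan to combine ``matching number $\le w$'' with a Helly property via a ``K\"onig-type induction'' is left as a hope, and no such induction is needed. The paper's route is direct: take $I\subseteq[k]$ \emph{minimal} with $\bigcup_{i\in I}B_i=B$. By minimality, for each $i\in I$ there is $y_i\in B_i$ with no neighbour in $\bigcup_{j\in I\setminus\{i\}}A_j$; hence for distinct $i,j\in I$ one has $i\in I(y_i)\setminus I(y_j)$ and $j\in I(y_j)\setminus I(y_i)$. The (repaired) $P_5$-construction with $z_i\in A_i$ and $z_j\in A_j$ then forces $y_iy_j\in E(G)$, so $\{y_i:i\in I\}$ is a clique and $\lvert I\rvert\le w$. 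This gives $\chi(B)\le\lvert I\rvert\cdot w^{-4}\chi(G)\le w^{-3}\chi(G)$ immediately, with no hypergraph detour.
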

\begin{proof}
	Since $G$ is connected, we may assume $B$ is nonempty.
	Since $\chi(D)\ge(1-w^{-2})\chi(G)$,
	there is a component $C$ of $G[D]$ with $\chi(C)\ge (1-w^{-2})\chi(G)$.
	Because $G$ is controlled, every vertex in $B$ is then mixed on $V(C)$.
	Thus, if there is another component $C'$ of $G[D]$ with $\chi(C')\ge(1-w^{-2})\chi(D)$,
	then every vertex in $B$ would be mixed on both $V(C)$ and $V(C')$,
	contrary to \cref{lem:mixed}.
	This proves the first statement of the lemma.
	
	To prove the second statement,
		let $I\subset [k]$ be minimal with $\bigcup_{i\in I}B_i=B$.
		By the minimality of $I$,
		for each $i\in I$ there exists $y_i\in B_i$ with no neighbour in $\bigcup_{j\in I\setminus\{i\}}A_j$.
		Suppose that there are distinct $i,j\in I$ with $y_iy_j\nin E(G)$.
		Since each of $y_i,y_j$ has a neighbour in $C$,
		there is an induced path $P$ between $y_i,y_j$ and with interior inside $C$.
		Let $z_i\in A_i$ be a neighbour of $y_i$ and $z_j\in A_j$ be a neighbour of $y_j$; 
		then $z_i\text-P\text-z_j$ would be an induced path of length at least four in $G$, a contradiction.
		Hence $\{y_i:i\in I\}$ is a clique in $G$ and so $\abs I\le w$,
		which yields
		$$\chi(B)\le\sum_{i\in I}\chi(B_i)\le \abs I\cdot w^{-4}\chi(G)\le w^{-3}\chi(G).$$
	Now, \cref{lem:mixed} implies that
	every vertex in $B$ is pure to each of $A_1,\ldots,A_k$.
	Hence, for each $i\in[k]$, $B_i$ is complete to $A_i$;
	and so $\chi(A_i)< (1-w^{-2})\chi(G)\le\chi(C)$ since $B_i$ is nonempty and $G$ is controlled.
	Therefore $\chi(D)=\chi(D\cup(A_1\cup\cdots\cup A_k))$; and so
	$$\chi(D)\ge \chi(G)-\chi(B)\ge(1-w^{-3})\chi(G),$$
	which verifies the second statement of the lemma.
	This proves \cref{lem:ter}.
\end{proof}
As the following lemma illustrates, $p$-terminal partitions naturally appear in controlled $P_5$-free graphs with high-chromatic anticomplete pairs,
which exist (for suitable choices of~$p$) by \cref{lem:covering};
and terminal partitions are useful because they provide high-chromatic complete pairs.
\begin{lemma}
	\label{lem:term}
	Let $p\ge0$, and let $G$ be a controlled $P_5$-free graph with clique number at most $w$.
	Assume that every induced subgraph $F$ of $G$ with $\chi(F)\ge(1-w^{-3})\chi(G)$ contains an anticomplete pair $(P,Q)$ in $F$ with $\chi(P),\chi(Q)\ge p$.
	Then $G$ contains a complete pair $(A,B)$ with $\chi(A)\ge w^{-4}\chi(G)$ and $\chi(B)\ge p$.
\end{lemma}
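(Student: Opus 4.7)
The plan is to extract the desired complete pair from a maximum $p$-terminal partition of $G$.  Let $(A_1,\ldots,A_k,B,D)$ be such a partition with $k$ maximal.  By \cref{lem:ter}, $G[D]$ has a unique component $C$ with $\chi(C)\ge(1-w^{-2})\chi(G)$; since the chromatic number of a graph equals the maximum over its components, $\chi(C)=\chi(D)\ge(1-w^{-3})\chi(G)$.  In particular the hypothesis applies to $F=G[C]$ and yields an anticomplete pair $(P,Q)$ in $G[C]$ with $\chi(P),\chi(Q)\ge p$.

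Next I would extend $(P,Q)$ maximally in $G[C]$: among all anticomplete pairs $(P',Q')\supseteq(P,Q)$ in $G[C]$ with $G[P'],G[Q']$ connected and chromatic number at least $p$ on each side, I would choose one maximising $\chi(P')+\chi(Q')$ and then $|P'|+|Q'|$.  By this extremal choice, $G[P']$ and $G[Q']$ are components of $G[C]\setminus S$ for some minimal cutset $S$ in $G[C]$ separating them.  Using the $P_5$-freeness of $G$ together with \cref{lem:mixed} and the minimality of $S$, every vertex of $S$ is complete to $P'$ or to $Q'$; write $S=S_1\cup S_2$ accordingly, with $S_1$ complete to $P'$ and $S_2$ complete to $Q'$.

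If $\chi(S_1)\ge w^{-4}\chi(G)$ or $\chi(S_2)\ge w^{-4}\chi(G)$, then $(S_1,P')$ or $(S_2,Q')$ is the desired complete pair.  Otherwise $\chi(S)<2w^{-4}\chi(G)$, and since $P',Q'$ are anticomplete, $\chi(C)\le\chi(S)+\max(\chi(P'),\chi(Q'))$, so (without loss of generality) $\chi(P')>(1-w^{-3}-2w^{-4})\chi(G)$.  The elementary inequality $w^2\ge w+2$ (valid for $w\ge 2$) gives $(1-w^{-3}-2w^{-4})\chi(G)\ge(1-w^{-2})\chi(G)$, and so $\chi(P')>(1-w^{-2})\chi(G)$.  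If $S_1$ is nonempty, then for any $v\in S_1$ the controlled property forces $\chi(P')\le\chi(N_G(v))<(1-w^{-2})\chi(G)$, a contradiction, and a symmetric computation rules out the configuration in which both $S_1,S_2$ are nonempty.

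The hardest part will be the remaining configuration, in which exactly one of $S_1,S_2$ is empty but the opposite side of $(P',Q')$ already exceeds $(1-w^{-2})\chi(G)$ in chromatic number and carries no vertex of $V(G)$ complete to it.  To close this case I would appeal to the maximality of $k$ in the $p$-terminal partition, adjoining the high-chromatic side (or a carefully chosen subset thereof) as a new $A_{k+1}$.  The verification that the resulting $B^{\mathrm{new}}_{k+1}$ has chromatic number at most $w^{-4}\chi(G)$ and that $G[D^{\mathrm{new}}]$ retains a component of chromatic number at least $(1-w^{-2})\chi(G)$ will use the controlled property once more (to exclude any vertex of $B$ from being complete to the high-chromatic side) together with the bound $\chi(B)\le w^{-3}\chi(G)$ obtained in the proof of \cref{lem:ter}.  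This bookkeeping, needed to convert the max-$k$ property into a genuine contradiction, is the most delicate part of the argument.
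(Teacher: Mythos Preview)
Your approach tracks the paper's proof closely through the choice of terminal partition, the passage to the unique high-chromatic component $C$, the maximal anticomplete pair $(P',Q')$, the minimal cutset $S$, and the use of \cref{lem:mixed} to split $S$ into $S_1\cup S_2$.  The analysis showing $\chi(S)<2w^{-4}\chi(G)$ and hence (say) $\chi(P')>(1-w^{-2})\chi(G)$, forcing $S_1=\emptyset$ and $S$ complete to $Q'$, is also exactly the paper's \cref{claim:pair1}.

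The gap is in the final step.  You propose to adjoin the \emph{high}-chromatic side $P'$ as a new $A_{k+1}$, but this cannot work: the terminal-partition axioms require the new $D$ to retain a component of chromatic number at least $(1-w^{-2})\chi(G)$, and once $P'$ is removed no such component survives (every remaining component of $C\setminus S$ is dominated by $\chi(P')$, and $D_0\setminus C$ was already small).  The paper instead adjoins the \emph{low}-chromatic side $Q'$ as $A_{k+1}$, so that $P'$ stays inside the new $D$ and certifies the chromatic-number requirement.

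With that correction, the real work is bounding $\chi(B_{k+1}^{\mathrm{new}})$, where $B_{k+1}^{\mathrm{new}}$ consists of $S$ together with the vertices of the old $B$ that have a neighbour in $Q'$.  Your proposed tools (controlled property plus $\chi(B)\le w^{-3}\chi(G)$) do not suffice: $w^{-3}$ is weaker than the required $w^{-4}$, and ``not complete to $P'$'' says nothing about adjacency to $Q'$.  What is needed is the paper's \cref{claim:pair2}: a short $P_5$-path argument showing that \emph{every} vertex of $B$ has a neighbour in $P'$.  Combined with $\chi(P')>(1-w^{-2})\chi(G)$ and the controlled property, this makes every vertex of $B$ mixed on $P'$, hence pure to $Q'$ by \cref{lem:mixed}; so the $B$-vertices with a neighbour in $Q'$ are in fact complete to $Q'$, and together with $S$ they form a set complete to $Q'$ of chromatic number at most $w^{-4}\chi(G)$ by the standing supposition.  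That is the missing idea.
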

\begin{proof}
	Suppose not.
	Let $(A_1,\ldots,A_k,B,D)$ be a $p$-terminal partition of $G$.
	By \cref{lem:ter},
	$\chi(D)\ge (1-w^{-3})\chi(G)$
	and there is a unique component $C$ of $G[D]$ with $\chi(C)\ge(1-w^{-2})\chi(G)$;
	then $\chi(C)=\chi(D)\ge(1-w^{-3})\chi(G)$.
	By the hypothesis, there is an anticomplete pair $(P,Q)$ in $C$ with $\chi(P),\chi(Q)\ge p$;
	and we may assume $G[P],G[Q]$ are connected.
	Among all such anticomplete pairs $(P,Q)$ in $C$,
	choose $(P,Q)$ with $\chi(P)+\chi(Q)$ maximal;
	and subject to these, with $\abs P+\abs Q$ maximal.
	We may assume that $\chi(P)\ge\chi(Q)$.
	Since $C$ is connected, there exists a minimal nonempty cutset $S$ separating $P,Q$ in $C$;
	then every vertex in $S$ has a neighbour in each of $P,Q$.
	The following claim shows that $P$ \dd occupies\ee{} much of the chromatic number of $G$.
	\begin{claim}
		\label{claim:pair1}
		Every vertex in $S$ is mixed on $P$, and $G[P]$ is the unique component of $C\setminus S$ with chromatic number at least $(1-w^{-2})\chi(G)$.
	\end{claim}
	\begin{subproof}
		By the maximality of $(P,Q)$,
		$G[P],G[Q]$ are two of the components of $C\setminus S$
		and $\chi(C\setminus(S\cup P\cup Q))\le\chi(P)$.
		By \cref{lem:mixed},
		each vertex in $S$ is complete to at least one of $P,Q$.
		Hence, since $\chi(P),\chi(Q)\ge p$
		and by our supposition,
		$\chi(S)\le 2w^{-4}\chi(G)\le w^{-3}\chi(G)$.
		It follows that
		$$\chi(P)=\chi(C\setminus S)
		\ge\chi(C)-\chi(S)
		\ge (1-w^{-3})\chi(G)-w^{-3}\chi(G)
		\ge (1-w^{-2})\chi(G).$$
		Since $F$ is controlled, every vertex in $S$ is then mixed on $P$;
		and so $S$ is complete to $Q$ which yields $\chi(Q)<(1-w^{-2})\chi(G)$.
		Hence $G[P]$ is the unique component of $C\setminus S$ with chromatic number at least $(1-w^{-2})\chi(G)$.
		This proves \cref{claim:pair1}.
	\end{subproof}
	We now use the $P_5$-free hypothesis to \dd extend\ee{} $(A_1,\ldots,A_k,B,D)$, as follows.
	\begin{claim}
		\label{claim:pair2}
		Every vertex in $B$ has a neighbour in $P$.
	\end{claim}
	\begin{subproof}
		Suppose there exists $u\in B$ with no neighbour in $P$.
		Let $v$ be a neighbour of $u$ in $A_1\cup\cdots\cup A_k$.
		Since $u$ has a neighbour in $D$ and $G[D]$ is connected,
		$G$ has an induced path $R$ of length at least two from $u$ to $P$ such that $V(R)\setminus(P\cup\{u\})\subset D\setminus P$.
		If $R$ has length at least three then $v\text-R$ would be an induced path of length at least four in $G$, a contradiction.
		Thus $R$ has length two; and so $v$ has a neighbour $z\in S$.
		Since $z$ is mixed on $P$ by \cref{claim:pair1},
		there exists $xy\in E(G[P])$ with $zx\in E(G)$ and $zy\nin E(G)$;
		but then $v\text-u\text-z\text-x\text-y$ would be an induced $P_5$ in $G$, a contradiction.
		This proves \cref{claim:pair2}.
	\end{subproof}
	Now, since $\chi(P)\ge(1-w^{-2})\chi(G)$ and $G$ is controlled, \cref{claim:pair2} implies that every vertex in $B$ is mixed on $P$.
	Thus $B$ is pure to $Q$ by \cref{lem:mixed};
	and so the set $Z$ of vertices in $B$ with a neighbour in $Q$ is complete to $Q$.
	Since $\chi(Q)\ge p$,
	our supposition implies that $\chi(S\cup Z)\le w^{-4}\chi(G)$.
	Therefore $(A_1,\ldots,A_k,Q,B\cup S,D\setminus(Q\cup S))$ contradicts the maximality of $k$.
	This proves \cref{lem:term}.
\end{proof}
We remark that combining \cref{lem:covering,lem:ctrl,lem:term} gives a relaxation of \cref{thm:mainll}:
every $P_5$-free graph $G$ with clique number $w$ contains a complete pair $(A,B)$ with $\chi(A),\chi(B)\ge w^{-d}\chi(G)$ for some universal $d>0$,
which is enough to imply $\chi(G)\le w^{O(\log w)}$.
To the best of our knowledge, the proof of the bound $\chi(G)\le w^{\log w}$ by Scott, Seymour, and Spirkl~\cite{MR4648583} relies crucially on induction on $w$ and does not immediately give such a pair.

Given the setup of terminal partitions and in particular \cref{lem:term},
we can view \cref{lem:locdense} as a corollary of the following lemma, which is essentially \cref{lem:locdense} itself plus a linear-chromatic anticomplete pair outcome.

\begin{lemma}
	\label{lem:linanti}
	There exists $b\ge6$ such that for every $\eps\in(0,\frac12)$, every $P_5$-free graph $G$ with clique number at most $w\ge2$ contains either:
	\begin{itemize}
		\item an $\eps$-colourful induced subgraph $J$ with $\chi(J)\ge2^{-4}\chi(G)$;
		
		\item an anticomplete pair $(P,Q)$ with $\chi(P)\ge 2^{-4}\chi(G)$ and $\chi(Q)\ge2^{-4}\eps\cdot \chi(G)$; or
		
		\item a complete pair $(A,B)$ in $G$ with $\chi(A)\ge w^{-b}\chi(G)$ and $\chi(B)\ge2^{-8}\eps\cdot \chi(G)$.
	\end{itemize}
\end{lemma}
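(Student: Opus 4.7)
The strategy is proof by contradiction: suppose that none of outcomes (i), (ii), (iii) holds, and derive a contradiction. Apply \cref{lem:ctrl} to extract a controlled induced subgraph $F$ of $G$ with $\chi(F) > (1-w^{-1})\chi(G) \ge \chi(G)/2 \ge 2^{-4}\chi(G)$. Since outcome~(i) fails (otherwise take $J = F$), $F$ cannot be $\eps$-colourful, so there is a vertex $v \in V(F)$ with $\chi(F\setminus N_F[v]) \ge \eps\chi(F)$. Let $C$ be a component of $F \setminus N_F[v]$ with $\chi(V(C)) = \chi(F \setminus N_F[v])$; then $\chi(V(C)) \ge \eps\chi(F) \ge 2^{-1}\eps\chi(G) \ge 2^{-4}\eps\chi(G)$.

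Let $X$ be the set of all vertices of $V(G)$ anticomplete to $V(C)$ in $G$; then $v \in X$, and $(X, V(C))$ is an anticomplete pair. Since $\chi(V(C)) \ge 2^{-4}\eps\chi(G)$ and outcome~(ii) fails, $\chi(X) < 2^{-4}\chi(G)$. Let $W := V(G) \setminus X \setminus V(C)$; because $C$ is a component of $F \setminus N_F[v]$, a standard connectedness argument shows $W \cap V(F) \subset N_F(v)$. By \cref{lem:mixed} applied to $(X, V(C))$, every vertex of $W$ is either complete to $V(C)$ or pure to $X$; this gives a partition $W = W_1 \cup W_2 \cup W_3$, where $W_1$ is complete to $V(C)$, $W_2$ is complete to $X$ and mixed on $V(C)$, and $W_3$ is anticomplete to $X$ and mixed on $V(C)$. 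A vertex in $W_3 \cap V(F)$ would be anticomplete to $v \in X$ and hence lie in $V(F) \setminus N_F[v]$, yet would have a neighbour in $V(C)$---forcing it into the component $V(C)$, a contradiction; hence $W_3 \subset V(G) \setminus V(F)$.

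The complete pair $(W_1, V(C))$ together with outcome~(iii) failing (using $\chi(V(C)) \ge 2^{-8}\eps\chi(G)$) yields $\chi(W_1) < w^{-b}\chi(G)$. The analysis then splits on $\chi(V(C))$ and $\chi(X)$. If $\chi(V(C)) \ge 2^{-4}\chi(G)$ and $\chi(X) \ge 2^{-4}\eps\chi(G)$, the anticomplete pair $(V(C), X)$ realizes outcome~(ii). If $\chi(V(C)) < 2^{-4}\chi(G)$ and $\chi(X) \ge 2^{-8}\eps\chi(G)$, then since $W_3 \cap V(F) = \emptyset$ the subadditivity bound $\chi(F) \le \chi(X) + \chi(V(C)) + \chi(W_1) + \chi(W_2)$ together with the upper bounds on $\chi(X)$ and $\chi(W_1)$ forces $\chi(W_2) \ge \chi(G)/4$, so an appropriate orientation of the complete pair $(W_2, X)$ as $(A,B)$ realizes outcome~(iii).

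The main obstacle is the residual regime $\chi(X) < 2^{-8}\eps\chi(G)$, in which $X$ is too small for either orientation of $(W_2, X)$ to meet the thresholds for outcome~(iii). The key observation to resolve this is that $W_2 \cap V(F) \subset N_F(v)$ has clique number at most $w - 1$, since any clique in $W_2 \cap V(F)$ together with $v$ is a clique in $G$. This opens two avenues: an induction on $w$ (with the base case absorbed into \cref{thm:gs}), or a further application of \cref{lem:term} inside $G[W_2 \cap V(F)]$ to extract the required complete pair. The ``exterior'' vertices $W_3 \cup (W_2 \setminus V(F))$ may be handled by a complementary refinement of the initial choice of $F$---for instance, by iterating \cref{lem:ctrl}---to ensure they contribute negligibly to $\chi(G)$. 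The final constant $b$ absorbs the polynomial-in-$w$ losses accumulated across these nested extractions.
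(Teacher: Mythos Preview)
Your decomposition via \cref{lem:mixed} into $X$, $V(C)$, $W_1$, $W_2$, $W_3$ is clean, and the first two cases are handled correctly. However, the ``residual regime'' $\chi(X) < 2^{-8}\eps\chi(G)$ is a genuine gap, and neither of your proposed fixes closes it. Induction on $w$ does not work with the constants as stated: applying the lemma to $G[W_2\cap V(F)]$ (clique number $\le w-1$, chromatic number only a constant fraction of $\chi(G)$) would yield, in outcome~(i), an $\eps$-colourful subgraph of chromatic number $\ge 2^{-4}\chi(W_2\cap V(F))$, not $\ge 2^{-4}\chi(G)$; iterating through $w$ levels degrades the constants by a factor exponential in $w$, destroying the polynomial dependence in outcome~(iii). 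Applying \cref{lem:term} inside $W_2\cap V(F)$ requires both that this subgraph be controlled and that every high-$\chi$ induced subgraph of it contain a suitable anticomplete pair---neither of which follows from what you have established. And iterating \cref{lem:ctrl} does not make $\chi(V(G)\setminus V(F))$ small; it only guarantees $\chi(F)$ is large, so the exterior vertices $W_3\cup(W_2\setminus V(F))$ cannot be dismissed this way. Your case split also leaves uncovered the range $\chi(V(C)) \ge 2^{-4}\chi(G)$ with $2^{-8}\eps\chi(G) \le \chi(X) < 2^{-4}\eps\chi(G)$, where neither orientation of the available pairs meets the thresholds.

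The paper's proof is structurally quite different and substantially more involved. It begins (via \cref{thm:gs}) by choosing $v$ so that $\chi(N_G(v))$ is \emph{linear} in $\chi(G)$, then passes to a controlled subgraph $S$ of the \emph{nonneighbourhood} of $v$ and builds a terminal partition of $G[S]$. Crucially, it invokes \cref{lem:mid}---which rests on the Erd\H os--Hajnal property of $P_5$ (\cref{thm:ehp5})---to bound the set $Y \subset N_G(v)$ of vertices nearly complete to the last part $D$ of that partition. This step has no analogue in your outline, and it is what allows the argument to isolate a linear-$\chi$ set $R \subset N_G(v)$ on which a second failure of $\eps$-colourfulness can be exploited. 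The remaining claims then thread a delicate path through the terminal partition to force a contradiction with its maximality.
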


\begin{proof}
	[Proof of \cref{lem:locdense}, assuming \cref{lem:linanti}]
	Let $b\ge6$ be given by \cref{lem:linanti};
	we claim that $a:=b+2$ suffices.
	To see this, suppose that none of the outcomes of the lemma holds.
	By \cref{lem:ctrl}, $G$ has a controlled induced subgraph $F$ with $\chi(F)>(1-w^{-1})\chi(G)\ge\frac12\chi(G)$.
	We now show that every sufficiently high-chromatic induced subgraph of $F$ contains a linear-chromatic anticomplete pair, as follows.
	\begin{clm}
		{\ref*{lem:locdense}.1}
		\label{claim:loc}
		Every induced subgraph $J$ of $F$ with $\chi(J)\ge(1-w^{-3})\chi(F)$
		contains an anticomplete pair $(P,Q)$ with $\chi(P),\chi(Q)\ge 2^{-4}\eps\cdot\chi(J)
		\ge 2^{-5}\eps\cdot\chi(F)$.
	\end{clm}
	\begin{subproof}
		By \cref{lem:linanti} with $J$ in place of $G$, $J$ contains either:
		\begin{itemize}
			\item an $\eps$-colourful induced subgraph $L$ with $\chi(L)\ge 2^{-4}\chi(J)$;
			
			\item an anticomplete pair $(P,Q)$ with $\chi(P),\chi(Q)\ge2^{-4}\eps\cdot\chi(J)$; or
			
			\item a complete pair $(X,Y)$ with $\chi(X)\ge w^{-b}\chi(J)$ and $\chi(Y)\ge 2^{-6}\eps\cdot\chi(J)$.
		\end{itemize}
		
		If the first bullet holds then the first outcome of the lemma holds since $2^{-4}\chi(J)\ge 2^{-5}\chi(F)\ge 2^{-6}\chi(G)$;
		and if the third bullet holds then the third outcome of the lemma holds since $w^{-b}\chi(J)\ge\frac14w^{-b}\chi(G)\ge w^{-a}\chi(G)$
		and $2^{-6}\eps\cdot\chi(J)\ge2^{-8}\eps\cdot \chi(G)$.
		So the second bullet holds by our supposition.
		This proves \cref{claim:loc}.
	\end{subproof}
	Now, by \cref{claim:loc} and \cref{lem:term} with $p=2^{-5}\eps\cdot\chi(F)$,
	there is a complete pair $(A,B)$ in $F$ with $\chi(A)\ge w^{-4}\chi(F)\ge \frac12w^{-4}\chi(G)\ge w^{-a}\chi(G)$ and 
	$\chi(B)\ge p=2^{-5}\eps\cdot\chi(F)\ge 2^{-6}\eps\cdot\chi(G)$,
	which verifies the second outcome of the lemma, a contradiction.
	This proves \cref{lem:locdense}.
\end{proof}

\subsection{Colourful induced subgraphs versus high-$\chi$ pure pairs}
\label{sec:locdense}

The purpose if this section is to prove \cref{lem:linanti} and in turn finish the proof of \cref{thm:mainll}.
We require the following application of the \erh{} property of $P_5$ (see \cref{thm:ehp5}).
\begin{lemma}
	\label{lem:mid}
	There exists $d\ge6$ such that the following holds.
	Let $G$ be a $P_5$-free graph with $\omega(G)=w\ge2$,
	and let $P,Q\subset V(G)$ be nonempty,
	such that $\chi(Q\setminus N_G(u))\le w^{-d}\chi(Q)$ for all $u\in P$.
	Then the set of vertices in $Q$ with a nonneighbour in $P$ has chromatic number at most $w^{-2}\chi(Q)$.
\end{lemma}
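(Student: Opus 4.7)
The plan is to argue by contradiction: suppose the set $Z\subset Q$ of vertices with a non-neighbour in $P$ satisfies $\chi(Z) > w^{-2}\chi(Q)$, choose $d\ge 6$ sufficiently large in terms of the constant $a$ from \cref{thm:ehp5}, and derive a contradiction.

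The first reduction I would make is to observe that $Z = \bigcup_{u \in P}(Q\setminus N_G(u))$, so by subadditivity of $\chi$ the desired bound follows once I locate a subset $U\subset P$ with $\abs U\le w^{d-2}$ that \emph{covers} $Z$, in the sense that every $v\in Z$ has a non-neighbour in $U$: this would yield
\[\chi(Z)\le \sum_{u\in U}\chi(Q\setminus N_G(u))\le w^{d-2}\cdot w^{-d}\chi(Q)=w^{-2}\chi(Q).\]
The task therefore reduces to exhibiting such a small cover.

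To construct $U$, I would proceed greedily: repeatedly pick an uncovered witness $v\in Z$ (one adjacent to all current elements of $U$) and add to $U$ an arbitrary non-neighbour $u_v\in P$ of $v$. This produces sequences $u_1,u_2,\ldots$ in $P$ and $v_1,v_2,\ldots$ in $Z$ satisfying $u_iv_i\notin E(G)$ and $u_iv_j\in E(G)$ for all $i<j$. The process terminates precisely when $U$ covers $Z$, so the task reduces further to showing that this termination occurs after at most $w^{d-2}$ steps.

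The heart of the proof is this bound on the length $k$ of the greedy sequence. I would apply \cref{thm:ehp5} to $G[\{v_1,\ldots,v_k\}]$, which is $P_5$-free with clique number at most $w$: if $k$ exceeds a suitable polynomial in $w$, this yields a stable set $S$ among the $v_i$'s of size exceeding $w$. Using the forced adjacency pattern ($u_iv_j\in E$ for $i<j$ and $u_iv_i\notin E$) together with the $P_5$-freeness of $G$ (exploited via \cref{lem:mixed}), I would then argue that large $k$ forces either a clique exceeding $w$ among $U\cup S$ or an induced $P_5$ on a five-vertex subset of this configuration, a contradiction in either case. The main obstacle, and the step requiring the most care, is handling the uncontrolled adjacencies among the $u_i$'s themselves and among non-consecutive $v_j$'s: the greedy construction does not pin these down, so iterated applications of \cref{thm:ehp5} to nested substructures are needed to gain uniform control, and this iteration is what dictates the precise polynomial bound on $k$ and in turn the choice of $d$.
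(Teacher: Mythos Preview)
Your high-level strategy---cover $Z$ by the non-neighbourhoods $Q\setminus N_G(u)$, bound the size of the cover via \cref{thm:ehp5}, and conclude by subadditivity of $\chi$---matches the paper's exactly. The divergence is in how the cover is built, and this is precisely what creates the obstacle you flag at the end.

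You build $U$ greedily, which only yields the one-sided relation $u_iv_j\in E(G)$ for $i<j$; the adjacencies $u_iv_j$ with $i>j$ remain uncontrolled, and you propose to repair this by iterated applications of \cref{thm:ehp5}. That repair is left vague and would genuinely require extra work to carry out.

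The paper sidesteps this entirely by taking $S\subset P$ to be a \emph{minimal} cover rather than a greedy one. Minimality guarantees that for each $u_i\in S$ there is $z_i\in Q$ nonadjacent to $u_i$ but adjacent to \emph{every} other $u_j$ (otherwise $u_i$ could be dropped from $S$). With this two-sided adjacency the rest is a single step: on any stable set $\{u_i:i\in I\}\subset S$ with $\abs I\ge 3$, if some $z_iz_j\notin E(G)$ then any third index $\ell\in I$ yields the induced path $u_i\text{-}z_j\text{-}u_\ell\text{-}z_i\text{-}u_j$; otherwise $\{z_i:i\in I\}$ is a clique, so $\abs I\le w$. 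Hence $\alpha(G[S])\le w$, and combined with $\omega(G[S])\le w$, \cref{thm:ehp5} gives $\abs S\le w^a$ immediately, so $d=a+2$ suffices. Replacing your greedy cover by a minimal one is the one missing idea, and it eliminates exactly the step you identified as the hardest.
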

\begin{proof}
	Let $a\ge 4$ be given by \cref{thm:ehp5};
	we claim that $d:=a+2$ satisfies the lemma.
	To see this, let $T$ be the set of vertices in $Q$ with a nonneighbour in $P$.
	For each $u\in P$, let $T_u:=Q\setminus N_G(u)$;
	then $\chi(T_u)<w^{-d}\chi(Q)$.
	Since $T=\bigcup_{u\in P}T_u$,
	there exists $S\subset P$ minimal such that $T=\bigcup_{u\in S}T_u$.
	Let $S=\{u_1,\ldots,u_t\}$.
	By the minimality of $S$, for every $i\in[t]$ there exists $z_i\in T_{u_i}$ such that $z_i$ is nonadjacent to $u_i$ and complete to $S\setminus\{u_i\}$.
	Let $I\subset[t]$ be such that $\{u_i:i\in I\}$ is a largest stable set in $G[S]$.
	Suppose that $\abs I>w\ge 2$.
	If there are distinct $i,j\in I$ with $z_iz_j\nin E(G)$,
	then for some $\ell\in I\setminus\{i,j\}$, $u_i\text-z_i\text-u_\ell\text-z_j\text-u_j$ would be an induced $P_5$ in $G$, a contradiction.
	Thus $\alpha(S)=\abs I\le w$;
	and so the choice of $a$ implies $\abs S\le w^a$.
	Hence $\chi(T)\le \abs S\cdot w^{-d}\chi(Q)\le w^{a-d}\chi(Q)=w^{-2}\chi(Q)$.
	This proves \cref{lem:mid}.
\end{proof}

The proof of \cref{lem:linanti} first picks a vertex whose neighbourhood has linear chromatic number and whose nonneighbourhood has polynomial chromatic number,
then extensively exploits the $P_5$-free hypothesis to examine the interactions between these two sides via a terminal partition of a controlled induced subgraph of the nonneighbourhood.
The colourful induced subgraph of linear chromatic number will then come out as a linear portion of the neighbourhood.
Let us go into detail.
\begin{proof}
	[Proof of \cref{lem:linanti}]
	Let $d\ge6$ be given by \cref{lem:mid};
	we claim that $b:=d+6$ suffices.
	Thus, suppose that no outcome of the lemma holds.
	By \cref{thm:gs},
	the set $Z$ of vertices $z$ in $G$ with $\chi(N_G(z))<2^{-3}\chi(G)$
	satisfies $\chi(Z)<\frac12\chi(G)$;
	and so $\chi(G\setminus Z)>\frac12\chi(G)$.
	By \cref{lem:ctrl},
	$G\setminus Z$ has a controlled induced subgraph $F$ with $\chi(F)>(1-w^{-1})\chi(G\setminus Z)>\frac14\chi(G)$.
	Thus, since the first outcome of the lemma does not hold,
	there exists $v\in V(F)$ such that the set $Q$ of nonneighbours of $v$ in $G$ satisfies $\chi(Q)\ge\eps\cdot\chi(F)$;
	and since $F$ is controlled,
	$\textstyle\chi(Q)\ge\max(w^{-2}\chi (F),\eps\cdot \chi(F))
	\ge\frac14\max(w^{-2},\eps)\chi(G)$.
	By \cref{lem:ctrl}, there exists $S\subset Q$ such that $G[S]$ is controlled and 
	$$\chi(S)> (1-w^{-1})\chi(Q)\ge2^{-3}\max(w^{-2},\eps)\chi(G).$$
	
	Let $(A_1,\ldots,A_k,B,D_0)$ be a $(\frac12w^{-d}\chi(G))$-terminal partition of $G[S]$.
	By \cref{lem:ter}, $\chi(D_0)\ge(1-w^{-3})\chi(S)$
	and there is a unique component $G[D]$ of $G[D_0]$ with $\chi(D)\ge(1-w^{-2})\chi(S)$;
	then $\chi(D)\ge(1-w^{-3})\chi(S)$.
	Let $X$ be the set of vertices in $N_G(v)$ with no neighbour in $D$,
	let $Y$ be the set of vertices $u\in N_G(v)$ with $\chi(D\setminus N_G(u))<w^{-d}\chi(D)$,
	and let $R:=N_G(v)\setminus(X\cup Y)$.
	Note that 
	$$\chi(D)\ge(1-w^{-3})\chi(S)>2^{-4}\max(w^{-2},\eps)\chi(G).$$
	Hence, since the second outcome of the lemma does not hold,
	$\chi(X)\le 2^{-4}\chi(G)\le\frac14\chi(N_G(v))$.
	\begin{clm}
		{\ref*{lem:linanti}.1}
		\label{claim:linanti1} $\chi(R)\ge\frac12\chi(N_G(v))\ge 2^{-4}\chi(G)$.
	\end{clm}
	\begin{subproof}
		By \cref{lem:mid} and the choice of $b$,
		the set of vertices in $D$ complete to $Y$
		has chromatic number at least
		$(1-w^{-2})\chi(D)\ge (1-w^{-2})^2\chi(S)\ge \frac12\chi(S)\ge
		2^{-5}\eps\cdot\chi(G)$.
		Hence $\chi(Y)\le w^{-b}\chi(G)\le 2^{-6}\chi(G)\le2^{-2}\chi(N_G(v))$ since the third outcome of the lemma does not hold.
		Then
		$\chi(R)\ge \chi(N_G(v))-\chi(X\cup Y)\ge\frac12\chi(N_G(v))\ge 2^{-4}\chi(G)$.
		This proves \cref{claim:linanti1}.
	\end{subproof}
	By \cref{claim:linanti1} and since the first outcome of the lemma does not hold,
	there exists $u\in R$ such that the set $E$ of nonneighbours of $u$ in $R$ satisfies $\chi(E)\ge\eps\cdot \chi(R)\ge 2^{-4}\eps\cdot\chi(G)$.
	Let $T:=N_G(u)\cap D$, and let
	$C$ be a component of $G[D\setminus N_G(u)]$ with
	$$\chi(C)=\chi(D\setminus N_G(u))\ge w^{-d}\chi(D).$$
	\begin{clm}
		{\ref*{lem:linanti}.2}
		\label{claim:mixedll1}
		$E\cup T$ is pure to $V(C)$.
	\end{clm}
	\begin{subproof}
		First, if there exists $z\in E$ mixed on $V(C)$,
		then there would be $xy\in E(C)$ such that $zx\in E(G)$ and $zy\nin E(G)$;
		and so $u\text-v\text-z\text-x\text-y$ would be an induced $P_5$ in $G$, a contradiction.
		Second, if there exists $z\in T$ mixed on $V(C)$,
		then there would be $xy\in E(C)$ with $zx\in E(G)$ and $zy\nin E(G)$;
		and so $v\text-u\text-z\text-x\text-y$ would be an induced $P_5$ in $G$, a contradiction.
		Hence $E\cup T$ is pure to $V(C)$.
		This proves \cref{claim:mixedll1}.
	\end{subproof}
	Let $E_1$ be the set of vertices in $E$ with no neighbour in $V(C)$.
	\begin{clm}
		{\ref*{lem:linanti}.3}
		\label{claim:mixedll}
		$\chi(E_1)\ge\frac12\chi(E)\ge\frac12\eps\cdot\chi(R)$.
	\end{clm}
	\begin{subproof}
		By \cref{claim:mixedll1}, $E\setminus E_1$ is complete to $V(C)$. Thus, since
		$$\textstyle\chi(C)\ge w^{-d}\chi(D)\ge 2^{-4}w^{-d-2}\chi(G)
		\ge w^{-d-6}\chi(G)
		\ge w^{-b}\chi(G)$$
		by the choice of $b$,
		and since the third outcome of the lemma does not hold,
		we have
		\[\chi(E_1)\ge\chi(E)-\chi(E\setminus E_1)\ge\chi(E)- 2^{-6}\eps\cdot\chi(G)\ge\chi(E)/2\ge\eps\cdot \chi(R)/2.\qedhere\]
	\end{subproof}
	Let $U$ be the set of vertices in $T$ with a neighbour in $V(C)$.
	By \cref{claim:mixedll1}, $U$ is complete to $V(C)$;
	and since $G[D]$ is connected, $U$ is nonempty.
	Let $W$ be the set of vertices in $B$ with a neighbour in $V(C)$.
	We aim to bound $\chi(U\cup W)$.
	To do this, let $W_1:=W\setminus N_G(u)$,
	and let $W_2:=W\cap N_G(u)$.
	Let $C_1$ be a component of $G[W_1]$ with $\chi(C_1)=\chi(W_1)$.
	The following claim and \cref{claim:mixedll} together show that $\chi(U\cup W_2)$ is small.
	\begin{clm}
		{\ref*{lem:linanti}.4}
		\label{claim:mixedll2}
		$E_1$ is complete to $U\cup W_2$ and pure to $V(C_1)$.
	\end{clm}
	\begin{subproof}
		First, if there are $x\in E_1$ and $y\in U\cup W_2$ with $xy\nin E(G)$,
		then $x\text-v\text-u\text-y\text-z$ would be an induced $P_5$ in $G$ for some $z\in V(C)$, a contradiction.
		Second, if there exists $x\in E_1$ mixed on $V(C_1)$, then there are $yz\in E(C_1)$ with $xy\in E(G)$ and $xz\nin E(G)$;
		and so $u\text-v\text-x\text-y\text-z$ would be an induced $P_5$ in $G$, a contradiction.
		Thus $E_1$ is complete to $U\cup W_2$ and pure to $V(C_1)$.
		This proves \cref{claim:mixedll2}.
	\end{subproof}
	Let $E_2$ be the set of vertices in $E_1$ with a nonneighbour in $V(C_1)$ (so $E_2$ is empty if $V(C_1)$ is empty).
	By \cref{claim:mixedll2}, $E_2$ is anticomplete to $V(C_1)$.
	\begin{clm}
		{\ref*{lem:linanti}.5}
		\label{claim:mixedll3}
		$\chi(E_2)\le 2^{-6}\eps\cdot\chi(G)$.
	\end{clm}
	\begin{subproof}
		If $V(C_1)$ is empty then this is true;
		so we may assume there exists $y\in V(C_1)$.
		By \cref{lem:ter}, there exists $i\in[k]$ such that $y$ is complete to $A_i$.
		
		Suppose that there are $x\in E_2$ and $z\in A_i$ with $xz\nin E(G)$.
		Let $r\in U$, and let $t\in V(C)$ be a neighbour of $y$.
		If $yr\nin E(G)$,
		then $v\text-x\text-r\text-t\text-y$ would be an induced $P_5$ in $G$;
		and if $yr\in E(G)$,
		then $v\text-x\text-r\text-y\text-z$ would be an induced $P_5$ in $G$, a contradiction.
		Thus, $E_2$ is complete to $A_i$.
		Since $\chi(A_i)\ge\frac12 w^{-d}\chi(S)\ge w^{-b}\chi(G)$ and the third outcome of the lemma does not hold,
		$\chi(E_2)\le 2^{-6}\eps\cdot\chi(G)$.
		This proves \cref{claim:mixedll3}.
	\end{subproof}
	We are now ready to bound $\chi(U\cup W)$.
	\begin{clm}
		{\ref*{lem:linanti}.6}
		\label{claim:mixedll4}
		$\chi(U)\le w^{-b}\chi(G)$ and $\chi(W)\le 2w^{-b}\chi(G)$.
	\end{clm}
	\begin{subproof}
		By \cref{claim:mixedll,claim:mixedll3} and the choice of $b$, we have
		$$\chi(E_2)\le 2^{-6}\eps\cdot\chi(G)\le 2^{-2}\eps\cdot\chi(R)\le\chi(E_1)/2$$
		and so
		$\chi(E_1\setminus E_2)\ge\frac12\chi(E_1)\ge 2^{-6}\eps\cdot\chi(G)$.
		Thus, since $E_1\setminus E_2$ is complete to $V(C_1)\cup U\cup W_2$ by definition and \cref{claim:mixedll2},
		and since the third outcome of the lemma fails,
		$\chi(U),\chi(C_1),\chi(W_2)\le w^{-b}\chi(G)$.
		Hence
		\[\chi(W)=\chi(W_1\cup W_2)\le \chi(W_1)+\chi(W_2)=\chi(C_1)+\chi(W_2)\le 2w^{-b}\chi(G).\qedhere\]
	\end{subproof}
	
	The rest of the proof is similar to the proof of \cref{lem:term}.
	Let $D':=D\setminus(V(C)\cup U)$.
	Because $G[S]$ is controlled and $U$ is nonempty, we have $\chi(C)<(1-w^{-2})\chi(S)$.
	Thus, since
	$\chi(U)\le w^{-b}\chi(G)\le 2^4w^{2-b}\chi(S)\le w^{-3}\chi(S)$
	by \cref{claim:mixedll4} and the choice of $b$, we obtain
	$$\chi(D'\cup V(C))=\chi(D\setminus U)
	\ge (1-w^{-3})\chi(S)-w^{-3}\chi(S)
	\ge(1-w^{-2})\chi(S)>\chi(C).$$
	Hence, since $D'$ is anticomplete to $V(C)$, we deduce that 
	$\chi(D')=\chi(D\setminus U)\ge (1-w^{-2})\chi(S)$;
	and so there is a component $C'$ of $D'$ with $\chi(C')\ge(1-w^{-2})\chi(S)$.
	Let $U'$ be the set of vertices in $U$ with a neighbour in $V(C')$;
	then $U'$ is nonempty since $G[D]$ is connected.
	Since $V(C)$ is complete to $U$, there exists a unique component $C_0$ of $G[D\setminus U']$ with $V(C)\subset V(C_0)$.
	We now show that much of the chromatic number of $S$ is \dd concentrated\ee{} on $C'$.
	\begin{clm}
		{\ref*{lem:linanti}.7}
		\label{claim:mixedll5}
		Every vertex in $U'$ is mixed on $C'$,
		and $C'$ is the unique component of $G[D\setminus U']$ with chromatic number at least $(1-w^{-2})\chi(S)$.
	\end{clm}
	\begin{subproof}
		Since $G[S]$ is controlled, every vertex in $U'$ is mixed on $V(C')$.
		Then the components of $D\setminus U'$ consist of $C_0$ and the components of $D'$ with no neighbour in $U\setminus U'$ (these include $C'$).
		By \cref{lem:mixed}, every vertex in $U'$ is pure to every component of $D\setminus U'$ different from $C'$.
		Hence every such component $K$ is complete to some vertex in $U'$,
		which yields $\chi(K)<(1-w^{-2})\chi(S)$ since $G[S]$ is controlled.
		It follows that $C'$ is the unique component of $G[D\setminus U']$ with chromatic number at least $(1-w^{-2})\chi(S)$.
		This proves \cref{claim:mixedll5}.
	\end{subproof}
	It suffices to \dd extend\ee{} the terminal partition $(A_1,\ldots,A_k,B,D_0)$ via the following.
	\begin{clm}
		{\ref*{lem:linanti}.8}
		\label{claim:mixedll6}
		Every vertex in $B$ has a neighbour in $V(C')$.
	\end{clm}
	\begin{subproof}
		Suppose that there exists $y\in B$ with no neighbour in $V(C')$.
		Let $z\in A_1\cup\cdots\cup A_k$ be a neighbour of $y$.
		Then since $y$ has a neighbour in $D$ and $G[D]$ is connected,
		$G$ has an induced path $P$ of length at least two from $y$ to $V(C')$ with $V(P)\setminus(V(C')\cup\{y\})\subset D\setminus V(C')$.
		If $P$ has length at least three then $z\text-P$ would be an induced path of length at least four in $G$, a contradiction.
		Thus $P$ has length two; and so $y$ has a neighbour $x\in U'$.
		By \cref{claim:mixedll5}, there exists $rt\in E(C')$ with $xr\in E(G)$ and $xt\nin E(G)$.
		But then $z\text-y\text-x\text-r\text-t$ would be an induced $P_5$ in $G$, a contradiction.
		This proves \cref{claim:mixedll6}.
	\end{subproof}
	
	Now, let $A_{k+1}:=V(C_0)$, and let
	$W'$ be the set of vertices in $B$ with a neighbour in $A_{k+1}$;
	then $W\subset W'$.
	Since every vertex in $W'$ is mixed on $V(C')$, \cref{lem:mixed} implies that
	$W'$ is complete to $A_{k+1}\supset V(C)$;
	and so $W'=W$.
	Let $B_{k+1}:=U'\cup W\subset U\cup W$;
	then
	$$\chi(B_{k+1})\le\chi(U\cup W)\le 3w^{-b}\chi(G)\le3\cdot 2^4w^{2-b}\chi(S) \le 2^6w^{2-b}\chi(S)\le w^{-4}\chi(S)$$
	by \cref{claim:mixedll4} and the choice of $b$.
	Hence, since $\chi(A_{k+1})\ge\chi(C)\ge w^{-d}\chi(D)\ge\frac12w^{-d}\chi(S)$,
	the partition $(A_1,\ldots,A_k,A_{k+1},B\cup U',D\setminus(U'\cup V(C_0)))$
	would violate the maximality of $k$, a contradiction.
	This proves \cref{lem:linanti}.
\end{proof}
Finally, we would like to remark that the arguments in this section can be adapted to deduce a \dd polynomial versus linear near-complete pairs\ee{} result for excluding $(4,t)$-brooms for every $t\ge1$:
there exists $b\ge1$ (depending on $t$) such that every $(4,t)$-broom-free graph $G$ with $\omega(G)=w$ contains disjoint $A,B\subset V(G)$ with $\chi(A)\ge w^{-b}\chi(G)$, $\chi(B)\ge 2^{-b}\chi(G)$, and $\chi(A\setminus N_G(v))<w^{-1}\chi(A)$ for all $v\in B$.
The reason why the above approach does not seem to yield full completeness in this case is because the proof of \cref{lem:mid} does not work for $(4,t)$-brooms when $t\ge2$ (even if this graph satisfies the \erh{} conjecture due to \cref{thm:ehp5} and a theorem of Alon, Pach, and Solymosi~\cite{MR1832443}),
and several arguments involving the mixed property would instead require the condition \dd having a neighbour and a nonneighbourhood with chromatic number at least $\chi(G)/\operatorname{poly}(w)$\ee.
Still, such a \dd near-complete\ee{} result would be enough to show that $(4,t)$-brooms satisfy similar bounds as in \cref{thm:llp5}.
We omit the detailed proofs, which are just technical adjustments of the presented material. 
\section*{Acknowledgements}
We would like to thank Alex Scott, Paul Seymour, and Raphael Steiner for helpful discussions.
A major portion of this paper appeared in the author's PhD thesis~\cite{2025thes}.

\bibliographystyle{abbrv}
\bibliography{2024hchi}
\end{document}